\theoremstyle{plain}
\newtheorem{theorem}{Theorem}
\newtheorem{proposition}{Proposition}
\newtheorem{lemma}{Lemma}
\newtheorem*{theorem*}{Theorem}
\newtheorem*{corollary*}{Corollary}
\newtheorem*{NC}{Negami's Conjecture}
\newtheorem{theorema}{Theorem}
\renewcommand{\thetheorema}{\Alph{theorema}}
\newtheorem*{theorem3prime}{Theorem $\mathbf{3'}$}
\theoremstyle{definition}
\newtheorem{remark}{Remark}
\def\deg{\operatorname{deg}}
\def\Im{\operatorname{Im}}
\newcommand{\R}{\mathbb{R}}
\newcommand{\C}{\mathbb{C}}
\newcommand{\Z}{\mathbb{Z}}
\newcommand{\Sn}{\mathcal{S}}
\newcommand{\Sa}{\overline{\mathcal{S}}}
\newcommand{\vh}{v_{\makebox[0.3pt][l]{\hexstar}\hexagon}}
\newcommand{\oG}{\overline{G}}
\begin{document}

\title{Three Theorems on Negami's Planar Cover Conjecture} 

\author{Dickson Y. B. Annor}
\address{Department of Mathematical and Physical Sciences, La Trobe University, Bendigo, Victoria, 3552, Australia}
\email{d.annor@latrobe.edu.au}

\author{Yuri Nikolayevsky}
\address{Department of Mathematical and Physical Sciences, La Trobe University, Melbourne, Victoria, 3086, Australia}
\email{y.nikolayevsky@latrobe.edu.au}

\author{Michael S. Payne}
\address{Department of Mathematical and Physical Sciences, La Trobe University, Bendigo, Victoria, 3552, Australia}
\email{m.payne@latrobe.edu.au}

\thanks{Dickson Annor was supported by a La Trobe Graduate Research Scholarship. Michael Payne was partially supported by a
DECRA from the Australian Research Council.}


\subjclass[2020]{Primary: 05C10; Secondary: 68R10} 
%
\keywords{Planar cover, Negami's Conjecture, graph $K_{1,2,2,2}$}

\begin{abstract}
A long-standing conjecture of S. Negami states that a connected graph has a finite planar cover if and only if it embeds in the projective plane. It is known that the conjecture is equivalent to the claim that \emph{the graph $K_{1,2, 2, 2}$ has no finite planar cover}. We prove three theorems showing that $K_{1,2, 2, 2}$ admits no planar cover with certain structural properties, and that a minimal planar cover of $K_{1,2, 2, 2}$ (if it exists) must be $4$-connected.
\end{abstract}

\maketitle

\section{Introduction}
\label{sec:intr}

All graphs in this paper are finite, simple and undirected. For a graph $K$, let $V(K)$ and $E(K)$ denote the vertex set and the edge set of $K$, respectively. A graph $G$ is called a \emph{cover} of a graph $K$ if there exists an onto mapping $\pi : V(G) \to V(K)$, called a (\emph{covering}) \emph{projection}, such that $\pi$ maps the neighbours of any vertex $v$ in $G$ bijectively onto the neighbours of $\pi(v)$ in $K$. If $K$ is connected, then the number $|\pi^{-1}(v)| = n$ is the same for all $v \in V(K)$, and then $\pi$ is called an \emph{$n$-fold cover}.

A cover is called \emph{planar} if it is a planar graph. A planar graph trivially has a planar cover by the identity projection; a nonplanar graph which embeds in the projective plane has a $2$-fold planar cover.

In $1988$, Negami made the following conjecture \cite{Neg1}:

\begin{NC}\label{conj:negami}
      A connected graph $K$ has a finite planar cover if and only if $K$ embeds in the projective plane.
\end{NC}

The results of Archdeacon \cite{Arc}, Fellows \cite{Fel1}, Hlin\v{e}n\'{y} \cite {Hli1, Hli3}, and Negami \cite{Neg2} combined to show that Negami's Conjecture is equivalent to the following statement: \emph{the graph $K_{1, 2, 2, 2}$ has no finite planar cover}. The graph $K_{1, 2, 2, 2}$ consists of the octahedron with an apex vertex connected to all other vertices, see Figure~\ref{fig:K2221} (the \emph{apex} vertex is the only one of degree $6$ and is labelled $0$ in the figure). Archdeacon and Richter~\cite{AR} showed that for any planar cover of a nonplanar graph, the fold number is necessarily even. For further results on Negami's Conjecture, we refer the reader to \cite{Hli2}. Bria\'{n}sk, Davis and Tan~\cite{BDT} have proved several results on the extension of Negami’s Conjecture to graphs on surfaces of higher genus.  Recently, the present authors have shown the following.

\begin{theorema}[\cite{ANP}]\label{tha:no12fold}
For $n < 14$, no $n$-fold cover of the graph $K_{1, 2, 2, 2}$ is planar.
\end{theorema}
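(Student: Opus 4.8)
The plan is to reduce the problem to a finite, highly structured search and then eliminate the surviving cases using planarity constraints together with computation. First I would reduce to \emph{connected} covers: since each connected component of a cover of the connected graph $K_{1,2,2,2}$ is again a cover, a planar cover of fold number $n$ always yields a connected planar cover of fold number at most $n$. Next, because $K_{1,2,2,2}$ is nonplanar, the theorem of Archdeacon and Richter forces every connected planar cover to have \emph{even} fold number. So it suffices to rule out connected planar covers of fold $n \in \{2,4,6,8,10,12\}$. For $n=2$ a connected planar double cover is exactly an embedding of $K_{1,2,2,2}$ in the projective plane, which does not exist (this non-embeddability is precisely what makes $K_{1,2,2,2}$ the critical graph for the Conjecture), so the base case is immediate.

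For the remaining values I would encode covers combinatorially. Fix a spanning tree of $K_{1,2,2,2}$ and assign the identity permutation to each of its $6$ edges; a connected $n$-fold cover is then determined by a permutation in $S_n$ on each of the $18-6=12$ cotree edges, subject to the condition that these permutations generate a transitive subgroup. Two assignments yield isomorphic covers when they differ by simultaneous conjugation (relabelling the fibres) or by one of the $48$ automorphisms of $K_{1,2,2,2}$. Any such cover $G$ has $7n$ vertices, $18n$ edges, and a prescribed degree sequence: $n$ vertices of degree $6$ (the apex fibre) and $6n$ vertices of degree $5$. Euler's formula then constrains every plane embedding: writing $f_k$ for the number of $k$-gonal faces, one obtains $\sum_{k\ge 3}(k-3)f_k = 3n-6$, so the total ``excess'' over a triangulation is only linear in $n$ while the number of faces is $11n+2$; hence almost all faces are triangles and $G$ is locally a triangulation away from a bounded number of spots.

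The heart of the argument is to exploit the local structure this forces. Deleting the apex of $K_{1,2,2,2}$ leaves the octahedron, so the neighbourhood of each degree-$6$ vertex of $G$ projects onto the octahedron, and combining this with the face-scarcity identity above forces nearly wheel-like configurations around the apex fibre and tightly constrained links at the degree-$5$ vertices. I would convert these rigidity statements into pruning rules for a backtracking search that builds the covering data incrementally (for instance by assigning the cotree permutations one at a time) while maintaining a planar rotation system and rejecting any partial graph that already contains a $K_5$- or $K_{3,3}$-subdivision or cannot be completed to a plane graph with the required degrees. Together with the symmetry reduction, these rules should shrink the a priori search space of size $(n!)^{12}$ to something that can be certified by computer for each $n\le 12$.

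The main obstacle is exactly this explosion of the search space: for $n=12$ the unpruned count of assignments is astronomical, so the result stands or falls on obtaining structural lemmas strong enough to keep the branching feasible, and on organising the enumeration so that its \emph{completeness} — not merely a sample — can be verified. I expect the decisive input to be a sharp rigidity description of how degree-$5$ and degree-$6$ fibres with octahedral links can be packed into a plane graph of the prescribed size, since this is what turns the abstract bound $\sum_{k\ge3}(k-3)f_k = 3n-6$ into concrete, checkable local patterns. It is worth noting that for \emph{regular} covers the analogous non-existence follows cleanly from the classification of finite group actions on the sphere; the difficulty here is precisely that a general fold-$\le 12$ cover need not be regular and may have a large normal closure, which is why a more hands-on structural-plus-computational attack seems necessary.
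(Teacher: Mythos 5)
Your proposal is not a proof but a plan whose decisive steps are left open, and you say so yourself: the ``sharp rigidity description'' of how degree-$5$ and degree-$6$ fibres with octahedral links pack into a plane graph, and a pruned enumeration whose \emph{completeness} can be certified, are exactly the content that would be needed, and neither is supplied. As it stands, the argument establishes only the easy reductions: restriction to connected covers, evenness of the fold via Archdeacon--Richter, and the case $n=2$ (which does follow, since a double cover is automatically regular and Negami proved that a connected graph with a planar regular cover embeds in the projective plane, while $K_{1,2,2,2}$ does not). For $n=4,6,8,10,12$ nothing is proved: the voltage encoding over a spanning tree and the identity $\sum_{k\ge3}(k-3)f_k=3n-6$ are correct bookkeeping, but they do not by themselves reduce the search space of size $(n!)^{12}$ to anything finite in practice, and no argument is given that the proposed pruning rules (rejecting partial assignments with $K_5$- or $K_{3,3}$-subdivisions, maintaining a rotation system, symmetry reduction) make the $n=12$ case terminate. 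A referee could not verify this proof even in principle, because the objects that would constitute it do not yet exist.

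For comparison, the proof of this statement is by citation to [ANP] (the present paper does not reprove Theorem~\ref{tha:no12fold}), and that proof takes an entirely different, non-computational route, visible in outline in Section~\ref{s:all9thm} here: from a purported planar cover one applies Hlin\v{e}n\'y's semi-cover construction to obtain a plane semi-cover $G'$ containing a genuine cover $H$ of the subgraph $K_4$ on the vertices $0,-1,-2,-3$; one then proves structural facts about $H$ (all $3$-cycles facial, internal face lengths divisible by $3$ and at least $9$, an upper bound $t<\tfrac23 m$ on the number of triangles $(1,2,3)$ inside a $3m$-gonal face, and so on) and closes with an Euler-formula count relating the fold of $H$ to these quantities. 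In other words, the bound $n\le 12$ falls out of a counting/discharging argument on one carefully constructed auxiliary graph, with no enumeration of covers at all. If you wish to pursue your approach, the honest framing is that it is a proposal for an independent computational verification, and its feasibility --- in particular whether rigidity lemmas strong enough to tame the fold-$12$ case can actually be proved --- is precisely what remains missing.
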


In this paper, we prove nonplanarity of certain covers of $K_{1, 2, 2, 2}$ and establish several structural properties of such planar covers (if they exist), of arbitrary fold.

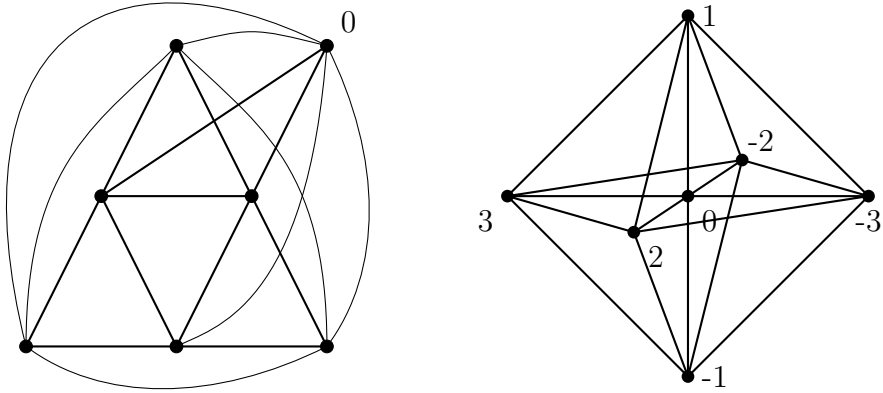
\begin{figure}[h]
    \centering
    \begin{tikzpicture}
    \begin{scope}[scale=0.5]
 \draw[ thick ] (0,0)--(8,0);
 \draw[ thick ] (0,0)--(4,8);
 \draw[ thick ] (8,0)--(4,8);
 \draw[ thick ] (2,4)--(6,4);
 \draw[ thick ] (4,0)--(6,4);
 \draw[ thick ] (4,0)--(2,4);
 \draw[ thick ] (6,4)--(8,8);
 \draw[ thick ] (2,4)--(8,8);
 \draw (0,0) .. controls (-2,8) and (2, 11) .. (8, 8);
 \draw (0,0) .. controls (0,5) and (2, 6) .. (4, 8);
 \draw (8,0) .. controls (8,5) and (6, 6) .. (4, 8);
 \draw (0,0) .. controls (2,-1.5) and (5, -1.5) .. (8, 0);
 \draw (4,0) .. controls (5, 0.5) and (7.5, 1) .. (8, 8);
 \draw (8,0) .. controls (9.5,2) and (9.5, 5)  .. (8, 8);
 \draw (4,8) .. controls (6,8.5) .. (8, 8);
 \filldraw[black] (0,0) circle (4.8pt);
 \filldraw[black] (4,0) circle (4.8pt);
 \filldraw[black] (8,0) circle (4.8pt);
 \filldraw[black] (2,4) circle (4.8pt);
 \filldraw[black] (8,8) circle (4.8pt); \draw (8,8) node[above right=.1em] {$0$};
 \filldraw[black] (4,8) circle (4.8pt);
 \filldraw[black] (6,4) circle (4.8pt);
\end{scope}
\hfill 1.5cm
\begin{scope}[scale=0.8, shift={(11,2.5)},z={(-.3,-.2)}, 
                    line join=round, line cap=round]
  \draw[thick] (3,0,0) -- (0,0,3) -- (0,3,0) -- (3,0,0) -- (-3,0,0) -- (0,-3,0) -- (0,3,0) -- (-3,0,0) -- (0,0,3) -- (0,-3,0) -- (3,0,0);
  \draw[thick] (0,3,0) -- (0,0,-3) -- (3,0,0) (0,-3,0) -- (0,0,-3) -- (-3,0,0) (0,0,3) -- (0,0,-3);
  \fill (3,0,0) circle (3pt); \fill (-3,0,0) circle (3pt);\fill (0,0,3) circle (3pt);
  \fill (0,3,0) circle (3pt); \fill (0,-3,0) circle (3pt);\fill (0,0,-3) circle (3pt);
  \draw (0,0,3) node[below right=.11em] {$2$}; \draw (0,0,-3) node[shift=({0.25,0.25})] {-$2$};
  \draw (3,0,0) node[below=.1em] {-$3$}; \draw (-3,0,0) node[below left=.1em] {$3$};
  \draw (0,3,0) node[right=.1em] {$1$}; \draw (0,-3,0) node[right=.1em] {-$1$};
  \fill (0,0,0) circle (3pt); \draw (0,0,0) node[below right=.1em] {$0$};
\end{scope}
\end{tikzpicture}
    \caption{Two drawings of the graph $K_{1, 2, 2, 2}$.}
    \label{fig:K2221}
\end{figure}

In Section~\ref{sec:vertcon} we study the connectivity of cover graphs, showing the following:

\begin{theorem} \label{th:4conn}
If $K$ is a $4$-connected graph, then a minimal planar cover of $K$ is $4$-connected.
\end{theorem}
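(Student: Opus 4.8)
The plan is to argue by contradiction: assume $G$ is a minimal planar cover of the $4$-connected graph $K$ (minimal in the number of vertices, equivalently in the fold number $n$, since $|V(G)|=n\,|V(K)|$) and that $G$ is \emph{not} $4$-connected, and then produce a planar cover with strictly fewer vertices. First I would record two easy reductions. If $n=1$ then $G\cong K$ is $4$-connected, so assume $n\ge 2$. If $G$ were disconnected, each connected component would itself be a cover of the connected graph $K$ — the restriction of $\pi$ to a component is still a local bijection and, by connectivity of $K$, still surjective — so a single component would be a smaller planar cover; hence $G$ is connected. Since $K$ is $4$-connected it has minimum degree at least $4$, and because $\pi$ preserves degrees we have $\deg_G(v)=\deg_K(\pi(v))\ge 4$ for every $v$. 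Thus if $G$ is not $4$-connected it has a minimum vertex cut $S$ with $t:=|S|\le 3$, and I fix such an $S$ together with the components $D_1,\dots,D_r$ ($r\ge 2$) of $G-S$.

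The structural core comes next. Put $Z=\pi(S)$, so $|Z|\le t\le 3<\kappa(K)=4$ and hence $K-Z$ is connected. I claim each $D_i$ projects onto all of $V(K)\setminus Z$ and in fact induces a cover of $K-Z$. The key point is local: for $v\in D_i$ every neighbour of $v$ lies in $D_i\cup S$, so $N_K(\pi(v))\subseteq \pi(D_i)\cup Z$; consequently $P_i:=\pi(D_i)\setminus Z$ is closed under taking neighbours in $K-Z$, i.e.\ it is a union of components of $K-Z$. As $K-Z$ is connected, $P_i$ is either empty or all of $V(K)\setminus Z$, and it is nonempty because a vertex of $D_i$ projecting into $Z$ would need a neighbour projecting outside $Z$ (its degree exceeds $|Z|$), forcing a vertex of $D_i$ off $Z$. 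Hence $P_i=V(K)\setminus Z$ for all $i$, and restricting $\pi$ to $D_i':=D_i\setminus\pi^{-1}(Z)$ gives a covering projection of $K-Z$ of some fold $a_i\ge 1$. Since the $a_i$ partition the $n$ preimages of each vertex of $V(K)\setminus Z$, we get $\sum_i a_i=n$ with $r\ge 2$, so in particular $a_1\le n-1<n$.

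Now the reduction. A cover of a graph is nothing more than a choice, for each edge, of a perfect matching between the two fibres, and any such choice yields a valid cover. Therefore the $a_1$-fold cover $G[D_1']$ of $K-Z$ extends to an $a_1$-fold cover of $K$ by adding $a_1$ fresh copies of each $z\in Z$ and selecting arbitrary matchings along the edges incident to $Z$ (including those inside $Z$). Such a cover has $a_1\,|V(K)|<n\,|V(K)|=|V(G)|$ vertices, contradicting minimality \emph{provided it can be chosen planar}. This is where planarity must be used rather than discarded: I would realise the new cap inside the given planar embedding of $G$. Because $S$ is a minimum cut of size at most $3$ in a planar graph, $D_1$ together with $S$ lies inside a closed disk whose bounding curve meets $G$ exactly in $S$; deleting the other components frees a single face $F$ incident to all of $S$, on whose boundary — since $S$ is a minimum cut — every vertex of $S$ has a neighbour in $D_1$. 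One then completes the deficient vertices of $S$ and supplies the missing $Z$-fibres by drawing the small cap, on at most three vertex-classes, inside $F$ and routing the new matching edges to preimages of $N_K(Z)$ appearing on $\partial F$.

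The main obstacle is precisely this last planarity step: guaranteeing that, for each newly added copy of a vertex $z\in Z$, the preimages of the neighbours of $z$ to which it must be joined can be selected \emph{co-facially} on $\partial F$, so that the completing matchings are drawn without crossings. I expect to handle this by a short case analysis on $t=|S|\in\{1,2,3\}$: the case $t=1$ is easiest (there are no edges inside $Z$, so each cap vertex attaches only to vertices of $D_1'$), $t=2$ is similar, while $t=3$ with a triangle on $Z$ is the delicate case and will likely require choosing the component $D_1$ and the bounding curve so that the three vertices of $S$ occur in a prescribed cyclic order on $\partial F$ before the routing is carried out.
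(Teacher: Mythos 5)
Your proposal diverges from the paper's argument and, at its decisive step, has a genuine gap that your own sketch cannot close. The reductions in your first two paragraphs are fine ($G$ connected, minimum degree $\ge 4$, $Z=\pi(S)$, $K-Z$ connected, each $D_i':=D_i\setminus\pi^{-1}(Z)$ a cover of $K-Z$, $\sum_i a_i=n$). The problem is the extension step, which you correctly identify as ``the main obstacle'' but then only gesture at. In fact the picture you describe for it is wrong. With your definition $D_1'=D_1\setminus\pi^{-1}(Z)$, \emph{every} vertex of $D_1'$ whose label is adjacent to $Z$ in $K$ has lost its $Z$-labelled neighbour: that neighbour lay either in $S$ or in the deleted set $D_1\cap\pi^{-1}(Z)$ (it cannot lie in another component). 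So for each $z\in Z$ and each $w\in N_K(z)\setminus Z$, \emph{all} $a_1$ vertices of $D_1'$ labelled $w$ need a new edge to a fresh copy of $z$. These vertices are scattered throughout the interior of the drawing of $D_1'$ --- in particular around the holes left by deleting $D_1\cap\pi^{-1}(Z)$ --- and are in no sense confined to the boundary of a single freed face $F$. A ``cap'' of $a_1|Z|$ fresh vertices drawn inside $F$ cannot reach them without crossings, and there is no reason the resulting graph is planar for \emph{any} choice of matchings. (Also, minimality of $S$ only guarantees that each other component sits in \emph{some} face of $G[D_1\cup S]$ incident to all of $S$; deleting them need not free a single common face.)

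The natural repair --- keep $D_1\cap\pi^{-1}(Z)$, so that the only deficient vertices are the neighbours of $S$ in $D_1$, which do lie on the vacated region(s) --- leads you straight back to what the paper actually proves. First, for the extension to exist with the right number of fresh copies of $z$, the number of deficient vertices of label $w$ must be the \emph{same} for every $w\in N_K(z)$; this equality is exactly the counting fact the paper establishes by lifting a cycle of $K$ through the path $(P,A,Q)$ and using that each lifted cycle crosses the cut an even number of times (case (a) of the paper's proof, resting on its Lemma~2). Second, one still needs the new edges to be routable without crossings; the paper gets this for free by doing the surgery as a \emph{contraction} of the connected set $X\cup G_2$ to a single vertex (and, in the all-labels-distinct case, a $\Delta$--$Y$ type replacement of one side by a triangle on the cut), rather than by inserting free-floating vertices into a face. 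So the paper's route --- case analysis on the multiset of labels of the three cut vertices, cycle-lifting parity arguments, then planarity-preserving local surgeries --- is not an alternative to your extension step; it \emph{is} the content your extension step is missing. As written, your proof relocates the entire difficulty into the final paragraph and leaves it unproved.
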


An application of Theorem~\ref{th:4conn} is the following corollary.

\begin{corollary*} 
A minimal planar cover of $K_{1,2,2,2}$ is $4$-connected.
\end{corollary*}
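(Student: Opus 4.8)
The plan is to derive the corollary directly from Theorem~\ref{th:4conn}, so that the only work is to verify that the hypothesis applies, namely that $K_{1,2,2,2}$ is $4$-connected. Recall that $K_{1,2,2,2}$ is the complete multipartite graph on $7$ vertices whose parts have sizes $1,2,2,2$; two vertices are non-adjacent precisely when they lie in a common part. I would first record the minimum degree: the apex $0$ forms a part of size $1$ and has degree $6$, while each of the remaining six vertices lies in a part of size $2$ and is adjacent to all but its single partner, hence has degree $5$. Thus $\delta(K_{1,2,2,2}) = 5$.

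Next I would check $\kappa(K_{1,2,2,2}) \ge 4$ directly. Let $X$ be any set of at most three vertices; the claim is that $K_{1,2,2,2} - X$ is connected. Indeed, the induced subgraph on the remaining at least four vertices is again complete multipartite, since deleting vertices from a complete multipartite graph yields a complete multipartite graph whose parts are the surviving parts. Its vertices are distributed among parts each of size at most $2$, and four vertices cannot fit into a single part of size at most $2$, so at least two parts survive. A complete multipartite graph with at least two nonempty parts is connected, so removing any three vertices leaves $K_{1,2,2,2}$ connected, giving $\kappa(K_{1,2,2,2}) \ge 4$ (in fact $\kappa = \delta = 5$, since complete multipartite graphs are maximally connected).

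With the hypothesis verified, I would apply Theorem~\ref{th:4conn} with $K = K_{1,2,2,2}$ to conclude that a minimal planar cover of $K_{1,2,2,2}$, should one exist, is $4$-connected. The entire content of the statement is thereby absorbed into Theorem~\ref{th:4conn}; at the level of the corollary there is no genuine obstacle beyond the elementary connectivity computation above.
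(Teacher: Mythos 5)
Your proposal is correct and matches the paper's approach: the corollary is stated there as an immediate application of Theorem~\ref{th:4conn}, the only required input being the (easy) fact that $K_{1,2,2,2}$ is $4$-connected, which you verify correctly via the complete multipartite structure. Nothing further is needed.
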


Recall that for a vertex $v$ in a graph, its (open) neighbourhood $N(v)$ is the set of vertices adjacent to $v$, and the closed neighbourhood $N[v]$ is the union of $N(v)$ and the vertex $v$ itself. A \emph{wheel graph} (denoted as \(W_{m}\) for $m$ vertices) is a graph formed by connecting a single central vertex to all vertices of a cycle graph $C_{m-1}$. 
The second main result of the paper is proved in Section~\ref{s:perf}:

\begin{theorem} \label{th:noperf}
  The graph $K_{1,2,2,2}$ does not admit a planar cover in which the closed neighbourhood of each vertex covering the apex vertex induces the wheel graph $W_7$.
\end{theorem}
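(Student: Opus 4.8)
The plan is to argue by contradiction. Suppose $K_{1,2,2,2}$ has a planar cover $G$ with projection $\pi$, and write $O=K_{2,2,2}$ for the octahedron, so that $0$ is the apex and $V(O)=V(K_{1,2,2,2})\setminus\{0\}$; assume every vertex of $A:=\pi^{-1}(0)$ has closed neighbourhood inducing $W_7$. The first step is to read off the local data forced by the covering property. Since $\pi$ is a local isomorphism, each vertex of $A$ has degree $6$ and each vertex of $B:=\pi^{-1}(V(O))$ has degree $5$; moreover, as the apex is the \emph{only} neighbour of an octahedron vertex that lies outside $V(O)$, each vertex of $B$ has exactly one neighbour in $A$. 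Hence the neighbourhoods $\mathrm{rim}(a):=N(a)$, for $a\in A$, partition $B$ into blocks of size $6$, and the $W_7$ hypothesis says precisely that each block induces a $6$-cycle to which $a$ is completely joined. For an $n$-fold cover this gives $|V(G)|=7n$ and $|E(G)|=18n$.

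The key reduction is to pass to the octahedron. Because $W_7$ is $3$-connected it embeds essentially uniquely (Whitney), so in the given embedding each hub $a$ lies inside the hexagon bounded by $\mathrm{rim}(a)$; since $a$ has no edges beyond its rim and every vertex of $B$ already has its degree saturated, a short argument shows that nothing of $G$ is trapped between $a$ and its rim, and deleting the hubs turns each rim into the boundary of a hexagonal face of $G':=G[B]$. Now $\pi$ restricts to a covering projection $G'\to O$ (the four octahedron-covers adjacent to a given $b\in B$ map bijectively onto the four octahedron-neighbours of $\pi(b)$), so $G'$ is a $4$-regular planar $n$-fold cover of the octahedron on $6n$ vertices and $12n$ edges. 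In $G'$ the $n$ rims are faces, their boundaries \emph{partition} $V(G')$, and each projects under $\pi$ to a Hamiltonian cycle of $O$. Thus it suffices to prove that the octahedron admits no planar cover carrying a system of $n$ hexagonal faces whose boundaries are Hamiltonian and partition the vertex set.

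The next step is to analyse the faces of $G'$. As $G'$ is $4$-regular, every vertex has even degree, so the faces of each component admit a proper black/white $2$-colouring. Fixing $b\in B$, its five $G$-edges occur in the cyclic order (two rim edges of its hexagon, the hub edge between them, then its two ``cross'' edges to other rims); after deleting hubs the rotation at $b$ becomes (two rim edges, two cross edges), with the hexagonal face in the corner between the rim edges. Colouring the hexagons black, one finds that every rim edge separates a (black) hexagon from a white face, the two cross edges at $b$ bound the opposite, non-hexagonal black face, and every white face alternates rim and cross edges, hence has even length; in particular each non-hexagonal black face is bounded entirely by cross edges. Feeding in the classification of Hamiltonian cycles of $O$---the complement of such a cycle is either two opposite triangular faces or a second Hamiltonian hexagon---pins down how the two cross edges at each rim vertex project, and hence constrains the non-hexagonal faces to a short list of possibilities.

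The endgame, which I expect to be the main obstacle, is to convert these constraints into a numerical contradiction. The cleanest route is a counting argument: the $2$-colouring together with the Hamiltonian classification bound the sizes of the non-hexagonal faces (black faces built from cross edges projecting to octahedron triangles, white faces to short alternating cycles), and combining the resulting face-size relations with Euler's formula $|V(G')|-|E(G')|+|F(G')|=2$ and the vertex-partition property should force the face count to miss the Euler value. Should the bare count leave slack---and I expect the hardest case to be excluding the ``second Hamiltonian hexagon'' type together with the longer even white faces, where the rigidity of the $W_7$ hypothesis must be pushed hardest---an alternative is to locate, inside two rim-adjacent wheels, an explicit subdivision of $K_{3,3}$ or $K_5$ contradicting planarity, or to show that the hexagon system would descend to an embedding from which one reconstructs a projective-planar embedding of $K_{1,2,2,2}$, which is impossible.
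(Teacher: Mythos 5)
Your reduction is a genuinely different route from the paper's, and its first half is essentially sound: deleting the hubs does turn each rim into a hexagonal face, and the resulting graph $G'$ is a $4$-regular planar $n$-fold cover of the octahedron whose $n$ hexagonal rim-faces partition the vertex set, with rotation (rim, rim, cross, cross) at every vertex. (One repair: ``nothing is trapped inside a wheel'' should be justified by Archdeacon's lemma --- every $3$-cycle of $G$ covers a peripheral $3$-cycle of $K_{1,2,2,2}$ and hence bounds a face, which is exactly the paper's Lemma~\ref{l:nobade} --- not by degree saturation alone.) The genuine gap is the $2$-colouring step. Evenness of degrees gives you only that the dual of (each component of) $G'$ is bipartite, i.e.\ \emph{some} proper $2$-colouring of the faces exists, and that around each vertex the hexagon corner and the cross-cross corner receive one colour while the two mixed corners receive the other. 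Your argument needs the global statement that all $n$ hexagons lie in the \emph{same} colour class --- equivalently, that every non-hexagonal face is ``pure'' (bounded entirely by cross edges, or alternating rim/cross edges). That is not a consequence of $4$-regularity: for a cross edge $(b,b'')$, whether the cross-cross face at $b$ coincides with the cross-cross face at $b''$ depends on the rotation at $b''$ (whether, reading past the edge $(b,b'')$, one first meets the other cross edge or a rim edge), and both options are locally consistent with the covering projection. If hexagons of both colours occur, impure faces such as triangles with one rim edge and two cross edges are possible, and then your Euler count no longer closes: each edge still lies on one hexagon or two non-hexagonal faces, the non-hexagonal faces carry total boundary length $18n$ with each of length at least $3$, so one only gets at most $7n$ faces, which is perfectly compatible with the $6n+2$ demanded by Euler's formula for $n\ge 2$.

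It is worth noting how much rides on that one claim: if purity did hold, the contradiction is immediate and clean (white faces are even of length $\ge 4$ with total length $12n$, hence at most $3n$ of them; all-cross faces have total length $6n$, hence at most $2n$; with the $n$ hexagons this gives at most $6n$ faces, contradicting $6n+2$), and the theorem would be far easier than the paper makes it. The paper instead contracts each wheel to a vertex and each maximal parallel sequence of external edges to a weighted edge of a multigraph $\overline{G}$, and its real work --- Lemma~\ref{l:par3} on weight-$3$ sequences forcing sphericon labellings, Lemma~\ref{l:two3inarow}, and the degree-$5$ case analysis using Remarks~\ref{rem:stli} and~\ref{rem:svsz} --- is precisely a label-based control of how the cross edges between wheels can be arranged in the plane. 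That is the information your colouring claim quietly assumes. So the missing step is not a technicality to be patched; it is where the content of the theorem lives, and your proposed endgame (``should force the face count to miss the Euler value'', or else locate a Kuratowski subdivision) does not supply it.
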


Further in Section~\ref{s:all9thm} we establish several structural results on a planar cover of $K_{1,2,2,2}$. They require a construction of a specific \emph{semi-cover} $G'$ of $K_{1,2,2,2}$ which was central in the proof of Theorem~\ref{tha:no12fold}. Non-existence of such a semi-cover implies Negami's Conjecture, see~\cite[Conjecture~2]{ANP}. In Theorem~\ref{th:notall9} we prove that there is no such planar semi-cover $G'$ satisfying a certain condition on the length of the faces of the planar cover of the graph $K_4$ within it (see Section~\ref{s:all9thm} for unexplained terminology and the precise statement).

\section{Preliminaries}
\label{sec:prel}

In this section we introduce some notions and terminology to be used in subsequent sections.

Let $G$ be a cover of graph $K$. For any subgraph $K'$ of $K$, we call the graph $H' = \pi^{-1}(K')$ the \emph{lift of} $K'$ \emph{into G}. The lift of a cycle $C$ of $K$ into $G$ is a disjoint union of cycles, the length of each of which is a multiple of the length of $C$. A cycle $C'$ in the lift of a cycle $C$ to $G$ is called \emph{short} if its length is equal to the length of $C$. Otherwise, it is called a \emph{long} cycle. A cycle $C$ is called \emph{peripheral} in a graph $K$ if it is chordless and $K\setminus C$ is connected. We will several times use the following result.

\begin{lemma}[\cite{Arc}]\label{lem:shortcycle}
Let $G$ be a plane cover of a graph $K$. Let $C'$ be a short cycle of $G$ covering a peripheral cycle $C$ in $K$. Then $C'$ is a face boundary.
\end{lemma}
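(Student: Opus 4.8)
The plan is to show that one of the two regions into which the Jordan curve $C'$ divides the plane contains no vertex of $G$; since $C'$ will be seen to be chordless, such a vertex-free region contains no edges either and is therefore a single face bounded exactly by $C'$. First I would check that $C'$ is chordless. Because $C'$ is short, $\pi$ restricted to $C'$ is an isomorphism onto $C$, and since $\pi$ maps the edges at any vertex $v'$ bijectively onto the edges at $\pi(v')$, any edge of $G$ projects to an edge of $K$. Hence a chord $x'y'$ of $C'$ would project to an edge $\pi(x')\pi(y')$ joining two non-consecutive vertices of $C$, i.e.\ a chord of $C$, contradicting that the peripheral cycle $C$ is chordless.

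Next I would record two consequences of the covering structure. If $w$ is a vertex off $C'$ adjacent to some $v_i'\in C'$, then $\pi(w)$ is a neighbour of $v_i:=\pi(v_i')$; as $C$ is chordless, the only neighbours of $v_i$ on $C$ are its two cycle-neighbours, and at $v_i'$ these lift precisely to the two $C'$-neighbours of $v_i'$, so $\pi(w)\in K\setminus C$. Moreover, since $G$ is a plane graph and $C'$ a cycle, no edge of $G$ other than those of $C'$ can cross $C'$; in particular every edge with both endpoints off $C'$ lies on a single side, so each connected component of the lift $H=\pi^{-1}(K\setminus C)$ lies entirely inside or entirely outside $C'$. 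Using that $K\setminus C$ is connected (the second half of peripherality) together with path-lifting, I would then show that if one region contains a vertex $a$ adjacent to $C'$, that region contains a lift of \emph{every} vertex of $K\setminus C$: lifting walks of $K\setminus C$ starting at $a$ stays within one component of $H$, hence on the same side, and reaches a lift of each vertex of $K\setminus C$. Lifting an edge from such a vertex back into $C$ then produces a lift of a $C$-vertex distinct from the corresponding vertex of $C'$ (the latter's unique neighbour over that edge lies on the other side), again on the same side; continuing along $C$ traces out a complete disjoint lift of the cycle $C$ on that side.

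It remains to rule out the possibility that both regions contain vertices of $G$, and \emph{this is the step I expect to be the main obstacle}. If it fails, then since $G$ is connected both regions contain a vertex adjacent to $C'$, so by the previous paragraph both the inside and the outside carry a complete lift of $C$ together with a lift of all of $K\setminus C$. The plan is to obtain a contradiction from planarity by an extremal argument: among the lifts of $C$ lying strictly inside $C'$, choose one, $\tilde C$, that is innermost, bounding a disk whose interior meets no further lift of $C$; then the interior of $\tilde C$ contains only vertices projecting into $K\setminus C$. Treating $\tilde C$ as a short (or, in a separate case, long) chordless lift of the peripheral cycle $C$ and running a descent on the number of enclosed vertices forces the interior of $\tilde C$ to be empty, so $\tilde C$ bounds a face; this is incompatible with the complete lift of $K\setminus C$ that must attach to $\tilde C$ from inside. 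The delicate point is to make this innermost/descent step precise—handling long lifts of $C$ and verifying that the enclosed lift of $K\setminus C$ genuinely forces an edge to cross the Jordan curve $C'$—and the main tool for pinning down the final crossing is the uniqueness of path-lifting for the covering $\pi$ applied across $C'$.
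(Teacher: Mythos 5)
The paper itself gives no proof of this lemma---it is quoted from Archdeacon's paper---so your proposal has to be judged against the standard argument. Your preliminary steps are correct and are exactly the right preparation: $C'$ is chordless; every vertex off $C'$ adjacent to $C'$ projects into $K\setminus C$; every component of $\pi^{-1}(K\setminus C)$ is itself a cover of the connected graph $K\setminus C$ and lies entirely on one side of $C'$; and (granting the implicit assumption that $G$ is connected, without which the statement is literally false---draw a second, disjoint copy of a cover inside a face of the first) each side of $C'$ containing a vertex contains such a component attached to $C'$.

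The genuine gap is the final step, precisely where you flag it, and the route you sketch cannot be repaired in the form given. The innermost lift of $C$ inside $C'$ may be a \emph{long} lift, and long chordless lifts of a peripheral cycle need not bound faces, nor even have one empty side: in the cube, viewed as a $2$-fold planar cover of $K_4$, the hexagonal lift of a triangle $C$ separates the two lifts of the fourth vertex, so it has vertices of the cover on both sides. Hence ``running a descent \ldots{} forces the interior of $\tilde C$ to be empty'' is false as a general principle; and even when an innermost lift does bound a face, nothing contradictory follows (nested face-bounding lifts are entirely possible). What is missing is a \emph{global} count of the edges joining $\pi^{-1}(C)$ to $\pi^{-1}(K\setminus C)$, which no local path-lifting analysis sees. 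Let $e\ge 1$ be the number of edges of $K$ between $C$ and $K\setminus C$. By local bijectivity of $\pi$, a lift of $C$ of fold $k$ is incident with exactly $ke$ such edges, a component of $\pi^{-1}(K\setminus C)$ of fold $m$ with exactly $me$, and the short lift $C'$ with exactly $e$. Suppose both sides $D_1,D_2$ of $C'$ contain vertices; then each side contains a vertex adjacent to $C'$, so $C'$ sends at least one, hence at most $e-1$, of its $e$ edges into each side. Write $a_i$ (resp.\ $c_i$) for the total fold of the components of $\pi^{-1}(K\setminus C)$ (resp.\ of the lifts of $C$ other than $C'$) lying in $D_i$, so that $a_1+a_2=n$ and $c_1+c_2=n-1$. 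Planarity forces every edge leaving a component in $D_i$ to end on $C'$ or on a lift of $C$ inside $D_i$, and every lift of $C$ inside $D_i$ receives all of its $\pi^{-1}(K\setminus C)$-edges from components in $D_i$; hence $a_ie \le c_ie + (e-1)$, so $a_i< c_i+1$, i.e.\ $a_i\le c_i$. Summing over $i=1,2$ gives $n\le n-1$, a contradiction. This fold-counting across both sides simultaneously is the key idea your proposal lacks.
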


Throughout the paper we will identify a planar cover with its drawing when this creates no ambiguity.

The following definition was introduced in~\cite[Section~2]{Hli3}. Let $G'$ be a plane graph, and let $F_e$ be the outer face of $G'$. The graph $G'$ is called a \emph{semi-cover} of a graph $K$ if there exits an onto mapping $\pi' : V(G') \to V(K)$, called a \emph{semi-projection}, such that for each vertex $v$ of $G'$ not incident with $F_e$, $\pi'$ maps the neighbours of $v$ bijectively onto the neighbours of $\pi'(v)$ in $K$, and for each vertex $w$ of $G'$ incident with $F_e$, $\pi'$ maps the neighbours of $w$ injectively to the neighbours of $\pi'(w)$. It is clear that every cover is a semi-cover, but the converse is not true.

We adopt the following notation convention in Sections~\ref{s:perf} and~\ref{s:all9thm} (but not in Section~\ref{sec:vertcon}, as Theorem~\ref{th:4conn} is not specific to the covers of $K_{1,2,2,2}$). We call the vertex of degree $6$ of $K_{1,2,2,2}$ the \emph{apex} vertex, and we call the objects (vertices, edges, cycles and subgraphs) lying in the subgraph $K_{2,2,2} \subset K_{1,2,2,2}$ \emph{octahedral}. We label the vertices of $K_{1,2,2,2}$ as follows: the vertex of degree $6$ is labelled $0$, and the six octahedral vertices (of degree $5$) are labelled $\{\pm 1, \pm 2, \pm 3\}$ in such a way that the vertices $i$ and $-i$ are not adjacent for $i=1,2,3$. Note that every vertex $\pm i$ of degree $5$ is adjacent to the vertex $0$ and to four vertices $\pm j, \pm k$, where $\{i,j,k\}=\{1,2,3\}$. Given a cover $G$ of $K_{1,2,2,2}$, we label every vertex $v$ of $G$ with the label of the corresponding vertex $\pi(v)$ of $K_{1,2,2,2}$. We will occasionally add a dash or a subscript to distinguish between two vertices of $G$ projected to the same vertex of $K_{1,2,2,2}$. By relabelling we mean permuting the labels $1,2,3$, and accordingly the labels $-1,-2,-3$ (and also sometimes swapping the labels within one or more of the pairs $(i, -i), \, i =1,2,3$); the precise meaning will be always clear from the context.

\section{Vertex-connectivity of Planar Covers. Proof of Theorem~\ref{th:4conn}}
\label{sec:vertcon}

Let $K$ be a connected graph, and let $G$ be a minimal planar cover of $K$, with the covering map $\pi: G \to K$. The graph $G$ is connected (otherwise, $G$ has a connected component with a fold number less than that of $G$). As Theorem~\ref{th:4conn} is not specific to the case $K=K_{1,2,2,2}$, our notation convention throughout this section will be different to the one introduced in Section~\ref{sec:prel}. We label the vertices of $G$ by lower case letters, and vertices of $K$ by capital letters, and we say that a vertex $v \in V(G)$ is labelled $A \in V(K)$ if $\pi(v) = A$.

Suppose that $X \subset V(G)$ is a vertex cut set of $G$. Let $G_1$ and $G_2$ be the partition of the set $V(G) \setminus X$ such that there are no edges from $G_1$ to $G_2$. We call $G_1$ and $G_2$ the \emph{sides} of $V(G) \setminus X$. For $x$ in $X$ and $i = 1, 2$, we define $N_i(x)$ to be the set of neighbours of $x$ in $G_i$, and $L_i(x)$ to be the set of labels of the vertices in $N_i(x)$.

We need the following observations.

\begin{lemma}\label{lem2a}
Suppose $K$ is $k$-connected and $G$ is a cover of $K$. Suppose $X$ is a vertex cut in $G$ containing at most $k-1$ vertices. If just one vertex $x \in X$ is labelled $A$, then all neighbours of $x$ with labels that do not appear in the cut are on the same side.
\end{lemma}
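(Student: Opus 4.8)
The plan is to argue by contradiction. Suppose $x$ has two neighbours $u$ and $w$ lying on opposite sides $G_1$ and $G_2$, with labels $B = \pi(u)$ and $C = \pi(w)$ that do not appear in the cut. Write $Y = \pi(X)$ for the set of labels occurring in $X$; by hypothesis $A \in Y$ but $B, C \notin Y$, and $B \neq C$ because $x$ has at most one neighbour over each label. Since $|Y| \le |X| \le k-1 < k$ and $K$ is $k$-connected, the graph $K \setminus Y$ is connected, so it contains a simple path $P$ from $B$ to $C$. As $A \in Y$, this path avoids $A$, and together with the edges $AB$ and $AC$ (which exist because $u$ and $w$ are neighbours of $x$) it closes up to a cycle $\Gamma = (A, B, \dots, C, A)$ in $K$, all of whose vertices other than $A$ lie outside $Y$.

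Next I would lift $\Gamma$. Since each vertex over $\Gamma$ has exactly two lifted incident edges, the preimage $\pi^{-1}(\Gamma)$ is a disjoint union of cycles; let $\tilde{\Gamma}$ be the one through $x$. Because $\tilde{\Gamma}$ covers $\Gamma$ and $A$ has exactly two neighbours $B, C$ on $\Gamma$, the two neighbours of $x$ along $\tilde{\Gamma}$ are the unique neighbour of $x$ over $B$ and the unique neighbour of $x$ over $C$, namely $u$ and $w$. Deleting $x$ from the cycle $\tilde{\Gamma}$ therefore leaves a path $Q$ from $u$ to $w$.

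Finally I would show that $Q$ avoids $X$ entirely, contradicting the fact that $X$ separates $G_1$ from $G_2$. Every vertex of $Q$ projects into $V(\Gamma) = \{A\} \cup V(P)$. A vertex of $Q$ projecting into $V(P)$ has a label outside $Y$, so it cannot lie in $X$, since every label of $X$ belongs to $Y$; and a vertex of $Q$ projecting to $A$ is distinct from $x$ and hence not in $X$, precisely because $x$ is the only vertex of $X$ labelled $A$. This is where the single-vertex hypothesis is used. Thus $Q$ is a path in $G \setminus X$ joining $u \in G_1$ to $w \in G_2$, the desired contradiction.

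I expect the main point to watch is the last step: one must ensure that $\tilde{\Gamma}$ meets $x$ only once, so that $u$ and $w$ really are its two neighbours, even when $\tilde{\Gamma}$ is a long cycle winding around $\Gamma$ several times and so passing over the label $A$ more than once. This holds because $\tilde{\Gamma}$ is a cycle in the simple graph $G$ and hence visits $x$ at most once; but it is exactly the point that makes the uniqueness hypothesis indispensable, as the other vertices of $\tilde{\Gamma}$ over $A$ are harmless only because they are guaranteed to lie outside the cut.
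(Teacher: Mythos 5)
Your proposal is correct and follows essentially the same route as the paper: both proofs build a cycle in $K$ through $(B,A,C)$ whose remaining vertices avoid the cut labels (using $k$-connectivity of $K$ minus those labels), lift it to a cycle through $x$, and observe that the lift cannot cross back over the cut because its vertices other than $x$ carry no cut labels and the other lifts of $A$ lie outside $X$ by the uniqueness hypothesis. Your explicit treatment of the long-cycle case and of why the neighbours of $x$ on the lifted cycle must be $u$ and $w$ only spells out what the paper leaves implicit.
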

\begin{proof}
Let $x$ be the only vertex in $X$ labelled $A$. Let $L$ be the set of labels of the other vertices in $X$. Note that $|L|\leq k- 2$, and moreover, the degree of $x$ is at least $k$. Let $u, v$ be two neighbours of $x$ with the labels $U$ and $V$, respectively, such that $U, V \notin L$. Suppose that $u$ and $v$ lie on the different sides. The path $(u,x,v)$ in $G$ projects onto the path $(U,A,V)$ in $K$. Since $K$ is $k$-connected, the graph $K \setminus(\{A\} \cup L)$ is connected, and so there is a path from $U$ to $V$ in $K \setminus(\{A\} \cup L)$. This implies that there is a cycle $C$ in $K \setminus L$ that contains the path $(U,A,V)$. Lifting $C$ into $G$ we get a set of disjoint cycles one of which contains the path $(u,x,v)$. But this cycle must pass through a vertex in the cut with label in $L$ to get back, a contradiction.
\end{proof}

Lemma~\ref{lem2a} immediately implies that any minimal planar cover of a $2$-connected graph is $2$-connected. The following fact is proved in \cite[Corollary~10]{Neg3}, but we include its proof for completeness.

\begin{lemma}\label{lem3}
If a graph $K$ is $3$-connected, then any minimal planar cover $G$ of $K$ is $3$-connected.
\end{lemma}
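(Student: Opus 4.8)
The plan is to mimic the structure of the proof of Lemma~\ref{lem2a} and the (induction) pattern suggested by the progression from $2$-connectivity to $3$-connectivity, pushing the argument one step further to rule out cut sets of size at most $2$ in the $3$-connected setting. Suppose for contradiction that the minimal planar cover $G$ of a $3$-connected graph $K$ has a vertex cut $X$ with $|X| \le 2$, with sides $G_1$ and $G_2$. By the remark immediately after Lemma~\ref{lem2a}, $G$ is already $2$-connected, so we may assume $|X| = 2$, say $X = \{x, y\}$.

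First I would split into cases according to the labels of $x$ and $y$. If $x$ and $y$ carry distinct labels $A$ and $B$, then each of $x$ and $y$ is the unique vertex of $X$ with its label, so Lemma~\ref{lem2a} applies to both: since $K$ is $3$-connected ($k = 3$, so $|L| \le k-2 = 1$ and here the ``other'' cut label set is a single label), all neighbours of $x$ whose labels avoid $\{A, B\}$ lie on a single side, and likewise for $y$. The goal in this case is to argue that one of the two sides can be reattached to the rest of $G$ after a suitable local modification, contradicting minimality: if essentially all the ``crossing'' structure is forced onto one side, then the other side together with $X$ can be replaced by a smaller cover (for instance by identifying the two copies of a neighbourhood across the cut), producing a planar cover of $K$ with strictly fewer vertices. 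This is the usual minimality-of-cover contradiction, and the $3$-connectivity of $K$ is what guarantees the cut is too small to separate the relevant labels.

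The remaining case, where $x$ and $y$ share the same label $A$, is the one I expect to be the genuine obstacle. Here Lemma~\ref{lem2a} does not apply directly, since $X$ contains two vertices labelled $A$. The plan is to use planarity more heavily: the cut $\{x,y\}$ of size $2$ in a plane graph separates the plane into regions, and one can choose a drawing in which $G_1$ and $G_2$ occupy complementary regions bounded by the two cut vertices. I would then exploit that any cycle $C$ of $K$ through $A$ lifts to disjoint cycles in $G$, and a lifted cycle that starts in $G_1$ and must return to $G_1$ (because $K \setminus A$ is still $2$-connected, indeed connected with enough paths) can only cross $X$ through $x$ and $y$, both labelled $A$. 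Counting how often such lifted cycles pass through $x$ versus $y$, together with the fact that $\pi$ restricts to a bijection on neighbourhoods, should force a parity or matching constraint that lets one side be collapsed onto the other. Concretely, I would aim to show that the two sides induce isomorphic ``half-covers'' that can be glued into a cover of $K$ on the vertices of a single side, again contradicting minimality.

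The hard part will be making the gluing/collapsing step rigorous in the same-label case: one must verify that the local neighbourhood data at $x$ and $y$ is compatible enough that deleting one side and redirecting its attaching edges yields a genuine cover (neighbourhoods still mapping bijectively) rather than merely a semi-cover, and that the result stays planar. I anticipate needing the hypothesis that $X$ is a \emph{minimal} cut, together with the $3$-connectivity of $K$ to guarantee that the labels $L_1(x) \cup L_1(y)$ and $L_2(x) \cup L_2(y)$ each exhaust $N(A)$, so that each side already carries a full copy of the neighbourhood of $A$ and the fold number can be reduced by one. Once that reduction is in place, the contradiction with minimality of $G$ closes the argument, establishing that no cut of size at most $2$ exists and hence that $G$ is $3$-connected.
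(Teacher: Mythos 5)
Your overall skeleton---split on whether $\pi(x)=\pi(y)$, show that the labels of the cut vertices' neighbours on each side exhaust $N(A)$, then collapse one side to contradict minimality---is the same as the paper's, and your same-label sketch contains the right germ: a lifted cycle that crosses the cut at $x$ must cross back, and can only do so through $y$. But in both cases you stop at a plan where the paper supplies the actual mechanism, and the gaps are genuine.

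Concretely: (i) In the distinct-label case you invoke ``the usual minimality-of-cover contradiction'' with no argument, and in fact no direct contradiction is available there in the paper either; what the paper does is a \emph{reduction}. By Lemma~\ref{lem2a}, all neighbours of $x$ with label different from $B$ lie in one side, say $G_2$; minimality of $X$ forces $N_1(x)\neq\emptyset$, so $N_1(x)$ consists of a single vertex $z$ with $\pi(z)=B$, and then $\{z,y\}$ is again a $2$-cut, now with both vertices labelled $B$. Without this step (or a substitute for it) your first case is simply open. (ii) In the same-label case, the entire content of the lemma is the claim that $L_1(x)=L_2(y)$ and $L_2(x)=L_1(y)$ --- equivalently your statement that $L_1(x)\cup L_1(y)$ covers $N(A)$ with each label appearing once --- and you only say that a crossing-count ``should force'' it. The proof is short but must be written: pick $p\in N_1(x)$ and $q\in N_2(x)$ with labels $P,Q$; since $K\setminus\{A\}$ is connected, $(P,A,Q)$ lies on a cycle of $K$; its lift through $(p,x,q)$ is a cycle of $G$ which, having crossed from $G_1$ to $G_2$ at $x$, must cross back, necessarily through $y$, yielding a path $(q',y,p')$ with $q'\in G_1$ labelled $Q$ and $p'\in G_2$ labelled $P$; letting $p,q$ vary and exchanging the roles of $x$ and $y$ gives the equalities. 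Your stronger aim of exhibiting the two sides as ``isomorphic half-covers'' is more than is needed and is not in general true: only the label data at the cut matches, not the sides themselves. Finally, the planarity of the collapse, which you flag but leave unresolved, is harmless: after deleting $G_2$, the vertices $x$ and $y$ lie on the boundary of a common face (the region $G_2$ occupied), so they can be identified there --- equivalently, contract a path through $G_2$ --- and planarity is preserved under deletion and contraction.
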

\begin{proof}
Suppose that $X = \{x, y\}$, with $\pi(x)=A$ and $\pi(y) = B$, is a vertex cut in $G$. First suppose that $A \ne B$. By Lemma~\ref{lem2a}, all the neighbours of $x$ with the labels different from $B$ must appear on the same side, say in $G_2$. If $x$ has no neighbour in $G_1$, we contradict the minimality assumption on $X$. Hence $x$ has exactly one neighbour $z$ in $G_1$. But then $\pi(z) =B$, and so $\{z,y\}$ is another minimal cut, with both vertices having the same label.

We can therefore assume that $A=B$. We claim that $L_1(x) = L_2(y)$ and $L_2(x) = L_1(y)$. Indeed, choose $p \in N_1(x)$ and $q \in N_2(x)$ (both $N_1(x)$ and $N_2(x)$ are nonempty by minimality of $X$), and denote $\pi(p)=P$ and $\pi(q) = Q$. The path $(p,x,q)$ in $G$ projects onto the path $(P,A,Q)$ in $K$. Since $K \setminus \{A\}$ is connected, there is a cycle containing the path $(P,A,Q)$ in $K$, and hence there is a cycle $C'$ in $G$ that contains the path $(p,x,q)$. Since $C'$ passes through $x \in X$, it must go through $y \in X$ to get back. Hence, there is a path $(q',y,p')$ contained in $C'$, where $q' \in G_1, \, p' \in G_2$ and $\pi(p')=P, \, \pi(q') = Q$, as claimed.

But now we can delete the vertex cut $X$ and join the neighbours of $x$ and $y$ in say $G_1$, to a single vertex labelled $A$, obtaining a smaller planar cover, a contradiction.
\end{proof}

It remains to show that if $K$ is $4$-connected, then $G$ is also $4$-connected.

\begin{proof}[Proof of Theorem~\ref{th:4conn}]
Let $X$ be a minimal vertex cut in $G$. By Lemma~\ref{lem3} we have $|X| \ge 3$. Suppose $X = \{x, y, z\}$. We consider the following cases.
\begin{enumerate}[label=(\alph*),ref=\alph*]
    \item\label{c1} All three vertices in $X$ have the same label.

    \item\label{c2} Exactly two vertices in $X$ have the same label.

    \item\label{c3} All three vertices in $X$ have different labels.
\end{enumerate}

For case~\eqref{c1}, let $x, y$ and $z$ be labelled $A$, and let $P$ and $Q$ be adjacent to $A$ in $K$. We claim that the set $N_1(x) \cup N_1(y) \cup N_1(z)$ contains the same number of vertices labelled $P$ and vertices labelled $Q$. Indeed, the path $(P,A,Q)$ is contained in a cycle of $K$ whose lift to $G$ is a disjoint union of cycles containing the paths $(p,x,q), \, (p',y,q')$ and $(p'',z,q'')$, where $p,p',p'' \in \pi^{-1}(P)$ and $q,q',q'' \in \pi^{-1}(Q)$. But if one of these cycles crosses the set $X$ from $G_1$ to $G_2$, it must cross back exactly once, and the claim follows. Now let $n_i, \, i=1,2$, be the number of vertices labelled $P$ in $N_i(x) \cup N_i(y) \cup N_i(z)$. Then we have $n_1 + n_2 = 3$ and it follows from the proof of the above claim that $n_1, n_2 > 0$. 
Up to relabelling we can assume that $n_1 = 1$, so that for any $P$ adjacent to $A$ in $K$, there is exactly one vertex $p \in N_1(x) \cup N_1(y) \cup N_1(z)$ labelled $P$. But now we can delete the cut $X$ (and the set $G_2$) and join all these vertices to a single vertex labelled $A$ hence obtaining a smaller planar cover, a contradiction.

For case~\eqref{c2}, let $x, y$ and $z$ be labelled $A, B$ and $B$, respectively. From Lemma~\ref{lem2a}, all the neighbours of $x$ with labels different from $B$ are on the same side, say in $G_2$. As $N_1(x)$ is nonempty, it must contain a single vertex $w$, and moreover, $\pi(w) = B$. But now we can take the set $\{w,y,z\}$ as a minimal vertex cut reducing this case to case~\eqref{c1}.

In case~\eqref{c3}, assume that $x, y$ and $z$ are labelled $A, B$ and $C$, respectively. By Lemma~\ref{lem2a}, all the neighbours of $x$ with
labels different from $B$ and $C$ are on the same side, say in $G_2$. The set $N_1(x)$ is nonempty and contains no more than two vertices whose labels belong to $\{B, C\}$. If $N_1(x)$ consists of a single vertex $w$, then the set $\{w,y,z\}$ is a minimal vertex cut; this takes us to case~\eqref{c2}. Thus, $N_1(x)$ consists of two vertices which are labelled $B$ and $C$.

Let $P\in L_2(x)$ and consider the lift of a cycle containing the path $(B,A,P)$. The cycle in this lift that contains $x$ must pass back over $X$ via $y$, which implies that the vertex adjacent to $y$ labelled $A$ is in $G_1$. By similar arguments, we see that $y$ is adjacent to two vertices in $G_1$ (labelled $A$ and $C$) and $z$ is adjacent to two vertices in $G_1$ (labelled $A$ and $B$). All other vertices adjacent to  $X$ are in $G_2$.

To complete the proof, we delete $G_1$ and join the vertices in $X$ with a $3$-cycle $(x,y,z)$ to get a smaller planar cover, a contradiction.
\end{proof}

\section{Proof of Theorem~\ref{th:noperf}}
\label{s:perf}

Let $G$ be a cover of $K_{1,2,2,2}$ with $\pi: G \to K_{1,2,2,2}$ the covering projection. Throughout this section we call vertices of degree $d, \; d=5,6$, of $G$ and of $K_{1,2,2,2}$ $d$-vertices. We use the labelling of the vertices of both $K_{1, 2, 2, 2}$ and $G$ introduced in Section~\ref{sec:prel}, so that a $6$-vertex is labelled $0$, and the $5$-vertices are labelled $\pm 1, \pm 2$ and $\pm 3$ in such a way that no two vertices $i$ and $-i$ are adjacent for $i=1,2,3$.

Suppose $G$ is a planar cover of $K_{1, 2, 2, 2}$; we identify $G$ with its planar drawing. Consider a $6$-vertex $0$ in $G$ and its open and closed neighbourhoods $N(0)$ and $N[0]$ respectively ($N(0)$ consists of the six $5$-vertices connected to the vertex $0$, and $N[0] = N(0) \cup \{0\}$). The induced subgraphs $B=B(0) \subseteq N(0)$ and $S=S(0) \subseteq N[0]$
will be called the \emph{ball} and the \emph{sphere} centred at the $6$-vertex $0$, respectively.

\begin{lemma} \label{l:nobade}
  Let $0$ be a $6$-vertex of $G$. Then any edge $(u,v)$ of $S(0)$ lies on the boundary of a triangular face $(0,u,v)$ of $G$; in particular, the edges $(0,u)$ and $(0,v)$ must be consecutive in the rotation diagram at $0$. The sphere $S(0)$ is either the disjoint union of paths and isolated vertices or a $6$-cycle.
\end{lemma}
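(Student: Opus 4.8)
The plan is to reduce both assertions to the single face-finding tool of Lemma~\ref{lem:shortcycle}. Consider an edge $(u,v)$ of $S(0)$ with both endpoints among the neighbours of $0$ (that is, an edge of the ball), and write $i=\pi(u)$, $j=\pi(v)$. Since $\pi$ restricts to a bijection of $N(0)$ onto the octahedral vertices, the labels $i,j$ are distinct; and since $(u,v)$ is an edge of $G$ it projects to an edge of $K_{2,2,2}$, so $j\neq-i$ and the vertices $i,j$ are adjacent in $K_{1,2,2,2}$. Because $0$ is adjacent to both $u$ and $v$, the triple $(0,u,v)$ is a $3$-cycle of $G$ projecting onto the triangle $(0,i,j)$ of $K_{1,2,2,2}$; the two cycles have equal length, so $(0,u,v)$ is a \emph{short} cycle over $(0,i,j)$.

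First I would verify that the triangle $(0,i,j)$ is peripheral. Chordlessness is automatic for a $3$-cycle. For connectivity of the complement, deleting the apex $0$ together with $i$ and $j$ leaves the four octahedral vertices $\{\pm1,\pm2,\pm3\}\setminus\{i,j\}$; as adjacent octahedral labels lie in two different antipodal pairs, the surviving set consists of one full antipodal pair together with one vertex from each of the other two pairs. In $K_{2,2,2}$ the only non-adjacencies occur within antipodal pairs, so this induced subgraph is $K_4$ minus one edge, which is connected. Hence $(0,i,j)$ is peripheral and Lemma~\ref{lem:shortcycle} applies to the short cycle $(0,u,v)$, forcing it to bound a (necessarily triangular) face of $G$. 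The ``in particular'' clause then follows at once: a triangular face $(0,u,v)$ occupies the angular sector at $0$ between the two spokes $(0,u)$ and $(0,v)$, so these spokes are consecutive in the rotation at $0$.

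For the last sentence I would read the claim as describing the induced subgraph on the neighbours, namely the ball $B(0)$, since $S(0)$ itself also contains the six spokes at $0$. The rotation at $0$ orders its six spokes cyclically as $c_1,\dots,c_6$, and the first part shows that every edge of $B(0)$ joins two neighbours whose spokes are consecutive in this order. Thus the edge set of $B(0)$ is contained in the six consecutive pairs $\{c_t,c_{t+1}\}$, so $B(0)$ is a spanning subgraph of the $6$-cycle $c_1c_2\cdots c_6$. A spanning subgraph of a $6$-cycle is either the whole cycle (no edge deleted) or, once at least one edge is deleted, a disjoint union of paths, with unused vertices counted as isolated vertices; this is exactly the claimed dichotomy.

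The argument is essentially routine once Lemma~\ref{lem:shortcycle} is available; the only point genuinely requiring care is confirming that the octahedral triangle $(0,i,j)$ is peripheral (chordless with connected complement), as this is precisely what licenses the face-finding lemma. The secondary bookkeeping step, that ball edges can only connect rotationally consecutive neighbours, is an immediate consequence of the first part and is what constrains $B(0)$ to sit inside a $6$-cycle.
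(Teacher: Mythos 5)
Your proof is correct and takes essentially the same route as the paper's: the paper's proof of this lemma is a one-line application of Lemma~\ref{lem:shortcycle}, observing that every $3$-cycle of $G$ is short and covers a peripheral triangle of $K_{1,2,2,2}$, which is exactly the argument you spell out (including the peripherality verification and the rotation/structure consequences that the paper leaves implicit). Your side remark about which of $B(0)$, $S(0)$ is meant simply compensates for a slip in the paper's definition; the intended convention, as all later usage (e.g.\ $S \subset B$ and ``$B(0)$ is a wheel iff $S(0)$ is a cycle'') shows, is that the sphere $S(0)$ is the induced subgraph on $N(0)$, which is precisely the object your argument analyses.
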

\begin{proof}
  The proof is a direct application of Lemma~\ref{lem:shortcycle}: any $3$-cycle of $G$ must bound a face, as any such cycle is short and is peripheral.
\end{proof}

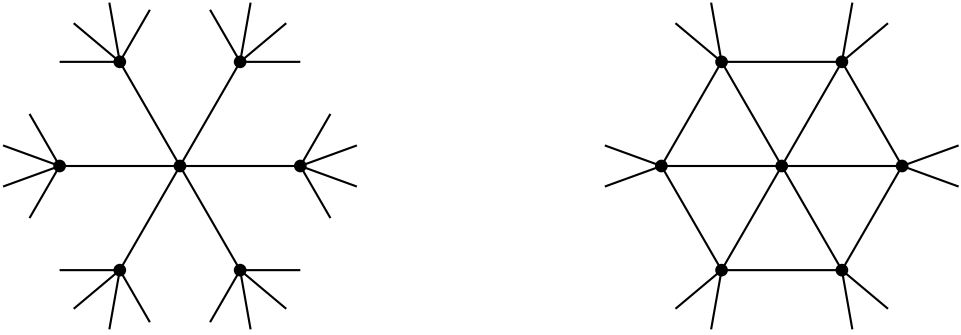
\begin{figure}[h]
\centering
\begin{tikzpicture}[scale=0.8]
\begin{scope}
\foreach \x in {0,1,...,5} {
\fill  ({2*cos((2*pi*\x/6) r)},{2*sin((2*pi*\x/6) r)}) circle (3pt);
\draw[thick] ({2*cos((2*pi*\x/6) r)},{2*sin((2*pi*\x/6) r)}) -- (0,0);
\foreach \y in {0,1,...,3} {
\draw[thick] ({2*cos((2*pi*\x/6) r)},{2*sin((2*pi*\x/6) r)}) -- ({2*cos((2*pi*\x/6) r)+cos((2*pi*\x/6-pi/3+2*pi/9*\y) r)},{2*sin((2*pi*\x/6) r)+sin((2*pi*\x/6-pi/3+2*pi/9*\y) r)});
}
}
\fill (0,0) circle (3pt);
\end{scope}
\begin{scope}[shift={(10,0)}]
\foreach \x in {0,1,...,5} {
\fill  ({2*cos((2*pi*\x/6) r)},{2*sin((2*pi*\x/6) r)}) circle (3pt);
\draw[thick] ({2*cos((2*pi*(\x+1)/6) r)},{2*sin((2*pi*(\x+1)/6) r)}) -- ({2*cos((2*pi*\x/6) r)},{2*sin((2*pi*\x/6) r)}) -- (0,0);
\foreach \y in {1,2} {
\draw[thick] ({2*cos((2*pi*\x/6) r)},{2*sin((2*pi*\x/6) r)}) -- ({2*cos((2*pi*\x/6) r)+cos((2*pi*\x/6-pi/3+2*pi/9*\y) r)},{2*sin((2*pi*\x/6) r)+sin((2*pi*\x/6-pi/3+2*pi/9*\y) r)});
}
}
\fill (0,0) circle (3pt);
\end{scope}
\end{tikzpicture}
\caption{A ball and a wheel.}
\label{fig:snowflake2}
\end{figure}

We call an edge joining two $5$-vertices of different balls \emph{external}. We call the number $d(B)$ of external edges attached to the $5$-vertices of a ball $B$ its \emph{external degree}. Note that $d(B)$ is equal to $24$ minus twice the number of the edges of the sphere $S \subset B$, and so $d(B)$ is even and $12 \le d(B) \le 24$.

A $2$-path of the octahedral subgraph $K_{2,2,2} \subset K_{1,2,2,2}$ is called \emph{straight} if its endpoints are not adjacent.

\begin{remark}\label{rem:stli}
If a $2$-path $p$ lying in a sphere $S \subset G$ projects to a straight $2$-path of $K_{2,2,2}$, then the endpoints of the two external edges of $G$ incident to the middle vertex of $p$ are not adjacent (as they are not adjacent in  $K_{2,2,2}$).
\end{remark}

We call a ball $B=B(0)$ a \emph{wheel} if it is isomorphic to the wheel graph $W_7$ (having seven vertices). Equivalently, a ball $B(0)$ is a wheel if its sphere $S(0)$ is a cycle (equivalently, if $d(B)=12$), see Figure~\ref{fig:snowflake2}. If the ball $B(0)$ is a wheel, we use the notation $W(0)$ instead of $B(0)$. It is easy to see that the sphere of a wheel projects, up to symmetry and relabelling, to one of the two Hamiltonian $6$-cycles of $K_{2,2,2}$ shown in Figure~\ref{fig:zs}; we call them a \emph{sphericon} and a \emph{zigzag}, respectively.

\begin{figure}[h]
\centering
\begin{tikzpicture}[scale=0.8]
\begin{scope}[z={(-.3,-.2)}, 
                    line join=round, line cap=round
                   ]
  \draw[thick] (0,3,0) -- (-3,0,0) -- (0,-3,0) -- (3,0,0) -- (0,3,0) -- (0,0,3) -- (0,-3,0);
  \draw[line width=2.5pt] (0,-3,0) -- (3,0,0) -- (0,0,3) -- (-3,0,0) -- (0,3,0);
  \draw[line width=2.5pt, dashed] (0,3,0) -- (0,0,-3) -- (0,-3,0);
  \draw[thick, dashed]  (3,0,0) -- (0,0,-3) -- (-3,0,0);
  \fill (3,0,0) circle (3pt); \fill (-3,0,0) circle (3pt);\fill (0,0,3) circle (3pt);
  \fill (0,3,0) circle (3pt); \fill (0,-3,0) circle (3pt);\fill (0,0,-3) circle (3pt);
  \draw (0,0,3) node[below right=.1em] {$2$}; \draw (0,0,-3) node[shift=({-0.5,0.25})] {-$2$};
  \draw (3,0,0) node[below=.1em] {-$3$}; \draw (-3,0,0) node[below left=.1em] {$3$};
  \draw (0,3,0) node[right=.1em] {$1$}; \draw (0,-3,0) node[right=.1em] {-$1$};
\end{scope}
\begin{scope}[shift={(10,0)},z={(-.3,-.2)}, 
                    line join=round, line cap=round
                   ]
  \draw[thick] (-3,0,0) -- (0,-3,0) (3,0,0) -- (0,0,3) -- (0,3,0) -- (3,0,0);
  \draw[line width=2.5pt] (0,3,0) -- (-3,0,0) -- (0,0,3) -- (0,-3,0) -- (3,0,0);
  \draw[line width=2.5pt,dashed] (0,3,0) -- (0,0,-3) -- (3,0,0);
  \draw[thick, dashed] (0,-3,0) -- (0,0,-3) -- (-3,0,0);
  \fill (3,0,0) circle (3pt); \fill (-3,0,0) circle (3pt);\fill (0,0,3) circle (3pt);
  \fill (0,3,0) circle (3pt); \fill (0,-3,0) circle (3pt);\fill (0,0,-3) circle (3pt);
  \draw (0,0,3) node[below right=.1em] {$2$}; \draw (0,0,-3) node[shift=({-0.5,0.25})] {-$2$};
  \draw (3,0,0) node[below=.1em] {-$3$}; \draw (-3,0,0) node[below left=.1em] {$3$};
  \draw (0,3,0) node[right=.1em] {$1$}; \draw (0,-3,0) node[right=.1em] {-$1$};
\end{scope}
\end{tikzpicture}
\caption{A sphericon (on the left) and a zigzag (on the right) in  $K_{2,2,2}$.}
\label{fig:zs}
\end{figure}
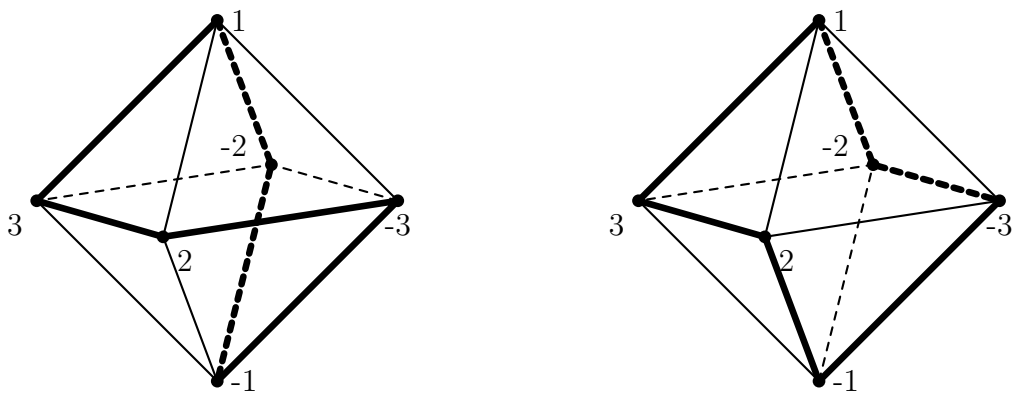

\begin{remark} \label{rem:svsz}
  Note that a zigzag contains no straight $2$-paths, while a sphericon contains exactly two. Moreover, a sphericon and a zigzag may be distinguished from one another by the labelling: on a zigzag, the distance between any pair of vertices $\pm i, \, i=1,2,3$, is $3$, while on a sphericon, there is only one pair of vertices $\pm i$ at distance $3$, and moreover, these vertices are the midpoints of the straight $2$-paths ($i = 2$ in the labelling as on the left in Figure~\ref{fig:zs}).
\end{remark}

\begin{proof}[Proof of Theorem~\ref{th:noperf}]
 Our proof follows the scheme of the proof in~\cite{Hli1}.

 Suppose $G$ is a planar cover of $K_{1,2,2,2}$ (which we identify with its plane drawing) such that all the balls are wheels. Every wheel has $12$ external edges attached to its $5$-vertices. We call a sequence $e_1, \dots, e_m,\; m \ge 1$, of external edges joining $5$-vertices of a wheel $W$ with $5$-vertices of a wheel $W'$ \emph{parallel} if for $i=1, \dots, m-1$, the edges $e_i$ and $e_{i+1}$ are consecutive in the positive (respectively, in the negative) direction along the boundary of $W$ (respectively, of $W'$). It is not hard to see that up to symmetries, all parallel sequences with $m=2, 3$ are as those given in Figure~\ref{fig:parseq}.

\begin{figure}[h]
\centering
\begin{tikzpicture}[scale=0.7]
\begin{scope} 
\foreach \z in {0,1} {
\foreach \x in {0,1,...,5} {
\fill  ({2*cos((2*pi*\x/6) r)},{6*\z+2*sin((2*pi*\x/6) r)}) circle (3pt);
\draw[thick] ({2*cos((2*pi*(\x+1)/6) r)},{6*\z+2*sin((2*pi*(\x+1)/6) r)}) -- ({2*cos((2*pi*\x/6) r)},{6*\z+2*sin((2*pi*\x/6) r)}) -- (0,6*\z+0);
\foreach \y in {1,2} {
\ifthenelse{\NOT \(\(0 = \z \AND \(\(1=\x \AND 2=\y\) \OR \(2=\x \AND 1=\y\)\)\) \OR \(1 = \z \AND \(\(4=\x \AND 2=\y\) \OR \(5=\x \AND 1=\y\)\)\)\)
}{\draw[thick] ({2*cos((2*pi*\x/6) r)},{6*\z+2*sin((2*pi*\x/6) r)}) -- ({2*cos((2*pi*\x/6) r)+cos((2*pi*\x/6-pi/3+2*pi/9*\y) r)},{6*\z+2*sin((2*pi*\x/6) r)+sin((2*pi*\x/6-pi/3+2*pi/9*\y) r)})}{};
}
}
\fill (0,6*\z) circle (3pt);
}
\draw[thick]({2*cos((2*pi/6) r)},{2*sin((2*pi/6) r)}) -- ({2*cos((2*pi*5/6) r)},{6+2*sin((2*pi*5/6) r)});
\draw[thick]({2*cos((2*pi*(2)/6) r)},{2*sin((2*pi*(2)/6) r)}) -- ({2*cos((2*pi*4/6) r)},{6+2*sin((2*pi*4/6) r)});
\end{scope}
\begin{scope}[shift={(7,0)}] 
\foreach \z in {0,1} {
\foreach \x in {0,1,...,5} {
\fill  ({2*cos((2*pi*\x/6) r)},{6*\z+2*sin((2*pi*\x/6) r)}) circle (3pt);
\draw[thick] ({2*cos((2*pi*(\x+1)/6) r)},{6*\z+2*sin((2*pi*(\x+1)/6) r)}) -- ({2*cos((2*pi*\x/6) r)},{6*\z+2*sin((2*pi*\x/6) r)}) -- (0,6*\z+0);
\foreach \y in {1,2} {
\ifthenelse{\NOT \(\(0 = \z \AND \(\(1=\x \AND 2=\y\) \OR \(2=\x \AND 1=\y\)\)\) \OR \(1 = \z \AND 4=\x \)\)
}{\draw[thick] ({2*cos((2*pi*\x/6) r)},{6*\z+2*sin((2*pi*\x/6) r)}) -- ({2*cos((2*pi*\x/6) r)+cos((2*pi*\x/6-pi/3+2*pi/9*\y) r)},{6*\z+2*sin((2*pi*\x/6) r)+sin((2*pi*\x/6-pi/3+2*pi/9*\y) r)})}{};
}
}
\fill (0,6*\z) circle (3pt);
}
\draw[thick]({2*cos((2*pi/6) r)},{2*sin((2*pi/6) r)}) -- ({2*cos((2*pi*4/6) r)},{6+2*sin((2*pi*4/6) r)});
\draw[thick]({2*cos((2*pi*(2)/6) r)},{2*sin((2*pi*(2)/6) r)}) -- ({2*cos((2*pi*4/6) r)},{6+2*sin((2*pi*4/6) r)});
\end{scope}
\begin{scope}[shift={(16,0)}] 
\foreach \z in {0,1} {
\foreach \x in {0,1,...,5} {
\fill  ({2*cos((2*pi*\x/6) r)},{6*\z+2*sin((2*pi*\x/6) r)}) circle (3pt);
\draw[thick] ({2*cos((2*pi*(\x+1)/6) r)},{6*\z+2*sin((2*pi*(\x+1)/6) r)}) -- ({2*cos((2*pi*\x/6) r)},{6*\z+2*sin((2*pi*\x/6) r)}) -- (0,6*\z+0);
\foreach \y in {1,2} {
\ifthenelse{\NOT \(\(0 = \z \AND \(1=\x \OR \(2=\x \AND 1=\y\)\)\) \OR \(1 = \z \AND \(4=\x \OR \(5=\x \AND 1=\y\)\)\)\)
}{\draw[thick] ({2*cos((2*pi*\x/6) r)},{6*\z+2*sin((2*pi*\x/6) r)}) -- ({2*cos((2*pi*\x/6) r)+cos((2*pi*\x/6-pi/3+2*pi/9*\y) r)},{6*\z+2*sin((2*pi*\x/6) r)+sin((2*pi*\x/6-pi/3+2*pi/9*\y) r)})}{};
}
}
\fill (0,6*\z) circle (3pt);
}
\draw[thick]({2*cos((2*pi/6) r)},{2*sin((2*pi/6) r)}) -- ({2*cos((2*pi*5/6) r)},{6+2*sin((2*pi*5/6) r)});
\draw[thick]({2*cos((2*pi*(2)/6) r)},{2*sin((2*pi*(2)/6) r)}) -- ({2*cos((2*pi*4/6) r)},{6+2*sin((2*pi*4/6) r)});
\draw[thick]({2*cos((2*pi/6) r)},{2*sin((2*pi/6) r)}) -- ({2*cos((2*pi*4/6) r)},{6+2*sin((2*pi*4/6) r)});
\end{scope}
\end{tikzpicture}
\caption{Parallel sequences with $m=2$ and $m=3$.}
\label{fig:parseq}
\end{figure}
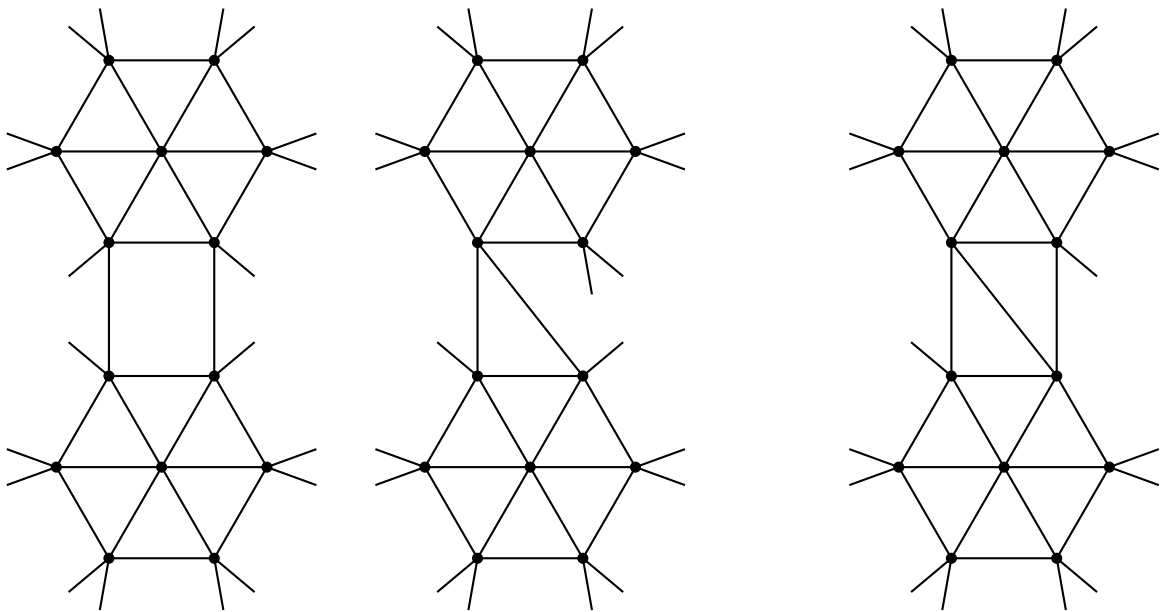

  We now construct a graph $\oG$ by contracting $G$ as follows. Every wheel of $G$ is contracted to a single points; these points are the vertices of $\oG$. Every \emph{maximal} parallel sequence of edges of $G$ (a parallel sequence which is not contained in a longer parallel sequence) is contracted to a single edge; these edges are the edges of $\oG$ (and the rotation diagram at every vertex of $\oG$ is that ``inherited" from $G$). We call the \emph{weight} of an edge of $\oG$ the length of the parallel sequence which has been contracted to it. The sum of the weights of the edges incident to a vertex of $\oG$ is $12\, (=d(W))$. Furthermore, $\oG$ is planar (and we identify it with its plane drawing obtained by ``contracting" the plane drawing of $G$). The graph $\oG$ may contain multiple edges (although no loops), but there are no $2$-faces (as the edges of $\oG$ are obtained by contracting maximal parallel sequences of edges of $G$). The graph $\oG$ has the following properties.
  
  
   \begin{proposition} \label{p:Ghex}
    {\ }

    \begin{enumerate}[label=\emph{(\alph*)},ref=\alph*]
      \item \label{it:Ghex4}
      The degree of every vertex of $\oG$ is at least $4$.

      \item \label{it:Ghex4no3}
      No vertex of degree $4$ of $\oG$ lies on the boundary of a triangular face of $\oG$.

      \item  \label{it:Ghex533}
      Each vertex of degree $5$ of $\oG$ lies on the boundary of no more than three triangular faces of $\oG$.
    \end{enumerate}
  \end{proposition}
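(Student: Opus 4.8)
The plan is to reduce all three statements to a single bound on the edge weights of $\oG$. Recall that the degree of a vertex of $\oG$ is the number of maximal parallel sequences leaving the corresponding wheel $W$, and that these weights sum to $d(W)=12$. The first thing I would isolate is the \emph{weight bound}: every maximal parallel sequence has weight at most $3$, equivalently the rim vertices of $W$ (the vertices of the $6$-cycle $S(W)$) met by a single parallel sequence form an arc of at most two vertices of $S(W)$. Granting this, part~\ref{it:Ghex4} is immediate: since each weight is at most $3$ and they sum to $12$, there are at least four of them, so every vertex of $\oG$ has degree at least $4$.

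To prove the weight bound I would use a \emph{no dangling interior} principle: if a parallel sequence from $W$ to $W'$ meets a rim vertex $v$ of $W$ that is interior to its arc, then \emph{both} external edges at $v$ lie in the sequence. Indeed the two external edges at $v$ are consecutive in the rotation at $v$, sandwiched between the two rim edges of $S(W)$; if only one of them lay in the sequence, the other would separate the edges coming from the two arc-neighbours of $v$, contradicting that consecutive edges of a parallel sequence are consecutive in the rotation around $W$ (the same argument applies on the $W'$ side). Thus an arc of three or more vertices contains an interior vertex $v$ using both its external edges; these project to the two octahedral neighbours of the label of $v$ not lying on $S(W)$ and land on two \emph{adjacent} rim vertices of $W'$, so the two target labels are adjacent in $K_{2,2,2}$, which by Remark~\ref{rem:svsz} and Remark~\ref{rem:stli} forces the $2$-path of $S(W)$ through $v$ to be non-straight. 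I would then propagate the forced labels along both arcs using the sphericon/zigzag description of $S(W)$ and $S(W')$ (Figure~\ref{fig:zs}); running over the finitely many relabelling classes of such $2$-paths, the monotone traversal along $S(W')$ combined with the ``interior used twice'' principle forces two distinct rim vertices of $W'$ to carry equal labels, or breaks monotonicity, a contradiction. Hence each arc has at most two vertices, and counting the triangular and quadrilateral faces lying between consecutive edges of the sequence then gives weight at most $3$. This label bookkeeping is the step I expect to be the main obstacle.

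For part~\ref{it:Ghex4no3}, the weight bound together with the weight sum $12$ forces a degree-$4$ vertex to have all four incident weights equal to $3$. A weight-$3$ sequence is the configuration on the right of Figure~\ref{fig:parseq}: it uses both external edges of one rim vertex of $W$ and one external edge of an adjacent rim vertex. This rigidity pins down the cyclic order of the four sequences around $S(W)$ and, through the labels on the sphericon or zigzag, the labels of the edges to the four partner wheels. For each of the four faces of $\oG$ at $W$ I would examine the corresponding face of $G$, bounded by the two extreme external edges of the two consecutive sequences, a rim arc of $W$, and a walk through the two partner wheels; using Lemma~\ref{lem:shortcycle} and the forced labels I would show that the two partner wheels cannot be joined by a single parallel sequence bordering this face, that is, the face is not triangular. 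Hence no degree-$4$ vertex of $\oG$ lies on a triangular face.

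Finally, for part~\ref{it:Ghex533}, the weight bound leaves exactly two possibilities for the weight multiset at a degree-$5$ vertex, namely $\{3,3,3,2,1\}$ and $\{3,3,2,2,2\}$, these being the only partitions of $12$ into five parts from $\{1,2,3\}$. Carrying out the same face-by-face analysis as for part~\ref{it:Ghex4no3}, now over these two multisets, I would show that a triangular face at $W$ forces its two bounding sequences to meet at a common rim vertex of $W$ with adjacent partner wheels, and that the available local structure (the shared rim vertices and the admissible weight pairs) permits at most three of the five faces around $W$ to satisfy this requirement, giving the stated bound.
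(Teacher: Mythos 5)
Your overall skeleton matches the paper's (weight bound $\le 3$ gives part~(\ref{it:Ghex4}); all-weights-$3$ at a degree-$4$ vertex plus a non-triangularity lemma gives part~(\ref{it:Ghex4no3}); case analysis at degree~$5$ gives part~(\ref{it:Ghex533})), but parts~(\ref{it:Ghex4no3}) and especially~(\ref{it:Ghex533}) contain genuine gaps. The first missing ingredient is the second half of the paper's Lemma~\ref{l:par3}: a parallel sequence of length $3$ forces \emph{both} spheres it joins to project to the same sphericon, with an essentially unique labelling (Figure~\ref{fig:threeseq}). You prove (sketchily) only the bound ``weight $\le 3$''; your later references to ``the sphericon or zigzag'' suggest you have not noticed that a zigzag cannot carry a weight-$3$ sequence at all. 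This forced labelling is what powers the paper's Lemma~\ref{l:two3inarow} (a triangular face of $\oG$ cannot have two edges of weight $3$, proved by counting external edges on the corresponding face of $G$ in Figure~\ref{fig:two3s}), and without it your part~(\ref{it:Ghex4no3}) argument is only a statement of intent.

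The more serious gap is in part~(\ref{it:Ghex533}), where you reduce to the two weight \emph{multisets} $\{3,3,3,2,1\}$ and $\{3,3,2,2,2\}$. What matters is the \emph{cyclic order} of the weights around the vertex, since the faces of $\oG$ sit between cyclically consecutive edges. The multiset $\{3,3,3,2,1\}$ contains the cyclic orders $(3,3,2,3,1)$ and $(3,3,1,3,2)$, in which the three weight-$3$ edges are not all consecutive; there an argument in the style of Lemma~\ref{l:two3inarow} rules out only \emph{one} of the five faces, which does not give the bound of three, and $(3,2,3,2,2)$ has no two consecutive $3$'s at all. The paper handles these by proving such cyclic orders \emph{cannot occur}, using the forced sphericon labelling together with Remarks~\ref{rem:svsz} and~\ref{rem:stli} (no parallel sequence can use both external edges at a vertex labelled $\pm 2$, so every weight-$3$ sequence must be followed by another weight-$3$ sequence not sharing a vertex of $S$). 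Your substitute mechanism --- that a triangular face forces its two bounding sequences to meet at a common rim vertex of $W$ --- is unjustified and false in general: a triangular face of $\oG$ only requires exactly three external edges on the boundary of the corresponding face of $G$, which may additionally contain arbitrarily many sphere edges, so the two sequences need not share a rim vertex. Finally, the remaining case $(3,3,2,2,2)$ is the hardest part of the paper's proof: one must assume the four faces away from the consecutive $3$'s are all triangular and derive a contradiction by propagating forced labels into the neighbouring wheels $W_1, W_2$ (Figure~\ref{fig:33222first}); your proposal does not engage with this step at all.
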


  Assuming Proposition~\ref{p:Ghex}, the proof of the theorem is completed by the following Euler's formula argument.

  Let $\mathbf{v}_i$ be the number of vertices of degree $i$ in $\oG$ (by Proposition~\ref{p:Ghex}\eqref{it:Ghex4}, we have $\mathbf{v}_i=0$ for $i < 4$). Let $\mathbf{f}_j, \, j \ge 3$, be the number of $j$-gonal faces of $\oG$. Counting the sum over the vertices of $\oG$ of the numbers of adjacent triangular faces, from Proposition~\ref{p:Ghex}\eqref{it:Ghex4},\eqref{it:Ghex4no3},\eqref{it:Ghex533} we obtain
  \begin{equation}\label{eq:f3v5}
    3 \mathbf{f}_3 \le 3 \mathbf{v}_5 + \sum\nolimits_{i=6}^{\infty} i\mathbf{v}_i.
  \end{equation}
  If $\mathbf{v}, \mathbf{e}$ and $\mathbf{f}$ denote the numbers of the vertices, the edges and the faces of $\oG$ respectively, we have
  \begin{gather*}
    \mathbf{v} = \sum\nolimits_{i=4}^{\infty} \mathbf{v}_i, \quad 2\mathbf{e} = \sum\nolimits_{i=4}^{\infty} i\mathbf{v}_i = \sum\nolimits_{j=3}^{\infty} j\mathbf{f}_j, \\
    \mathbf{f} = \sum\nolimits_{j=3}^{\infty} \mathbf{f}_j = 2 + \sum\nolimits_{i=4}^{\infty} (\tfrac12 i -1)\mathbf{v}_i,
  \end{gather*}
 by Euler's formula. Expressing $2\mathbf{e}-4\mathbf{f}$ in terms of the $\mathbf{v}_i$'s and in terms of the $\mathbf{f}_j$'s we get
 \begin{equation*}
   2\mathbf{e}-4\mathbf{f} = -\mathbf{f}_3 + \sum\nolimits_{j=5}^{\infty} (j-4) \mathbf{f}_j = -8 - \mathbf{v}_5 + \sum\nolimits_{i=6}^{\infty} (4 - i)\mathbf{v}_i,
 \end{equation*}
 and so by~\eqref{eq:f3v5} we obtain
 \begin{equation*}
    0 < 8 +\sum\nolimits_{j=5}^{\infty} (j-4) \mathbf{f}_j = \mathbf{f}_3 - \mathbf{v}_5 + \sum\nolimits_{i=6}^{\infty} (4 - i)\mathbf{v}_i \le \sum\nolimits_{i=7}^{\infty} (4 - \tfrac23 i)\mathbf{v}_i \le 0,
 \end{equation*}
 a contradiction.
 \end{proof}

It remains to prove Proposition~\ref{p:Ghex}.

\begin{proof}[Proof of Proposition~\ref{p:Ghex}]
We start with the following lemma\footnote{We are thankful to the anonymous reviewer for an idea which substantially simplified the proof.}.

\begin{lemma} \label{l:par3}
The following holds.
\begin{enumerate}[label=\emph{(\alph*)},ref=\alph*]
    \item \label{it:widthle3}
    The maximal length of a parallel sequence of $G$ \emph{(}that is, the maximal weight of an edge of $\oG$\emph{)} is $3$. Moreover, if two wheels $W$ and $W'$ are connected by a parallel sequence of length $3$, then their corresponding spheres $S$ and $S'$ project to the same sphericon of $K_{2,2,2}$ and have the same orientation; up to relabeling of the vertices of $K_{2,2,2}$, we have the labelling of vertices of $S$ and $S'$ as in Figure~\ref{fig:threeseq}. 

   \item \label{it:two3inarow}
    A triangular face of $\oG$ cannot have two edges of weight $3$.
\end{enumerate}
\end{lemma}

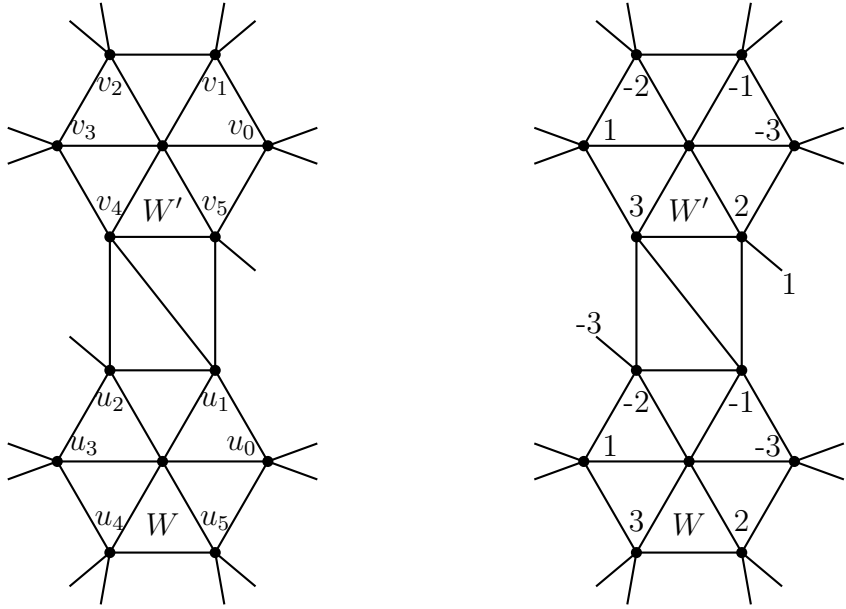
\begin{figure}[h]
\centering
\begin{tikzpicture}[scale=0.7]
%
\begin{scope}[shift={(10,0)},rotate=270] 
\def \r {2}
\def \a {pi/3}
\def \b {2*pi/9}
\def \h {6}
\newarray\testarray
\readarray{testarray}{-1&-2&1&3&2&-3}
\foreach \z in {0,1} {
\foreach \x in {0,1,...,5} {
\fill  ({\r*cos((\x*\a) r)},{\h*\z+\r*sin((\x*\a) r)}) circle (3pt);
\draw[thick] ({\r*cos(((\x+1)*\a) r)},{\h*\z+\r*sin(((\x+1)*\a) r)}) -- ({\r*cos((\x*\a) r)},{\h*\z+\r*sin((\x*\a) r)}) -- (0,\h*\z);
\ifthenelse{\x=1 \OR \x=2}{\node at ({\r*cos((\x*\a) r)},{\h*\z+\r*sin((\x*\a) r)-0.6}) {\testarray(\x)}}
{
\ifthenelse{\x=4 \OR \x=5}{\node at ({\r*cos((\x*\a) r)},{\h*\z+\r*sin((\x*\a) r)+0.6}) {\testarray(\x)}}{
\ifthenelse{\x=0}{\node at (\r-0.5,\h*\z+0.3) {-3}}{\node at (-\r+0.5,\h*\z+0.3) {1}
}
}
};
\foreach \y in {1,2} {
\ifthenelse{\NOT \(\(0 = \z \AND \(1=\x \OR \(2=\x \AND 1=\y\)\)\) \OR \(1 = \z \AND \(4=\x \OR \(5=\x \AND 1=\y\)\)\)\)
}{\draw[thick] ({\r*cos((\x*\a) r)},{\h*\z+\r*sin((\x*\a) r)}) --
({\r*cos((\x*\a) r)+cos((\x*\a-\a+\b*\y) r)},{\h*\z+\r*sin((\x*\a) r)+sin((\x*\a-\a+\b*\y) r)})
;}{}
}
}
\fill (0,\h*\z) circle (3pt);
}
\draw[thick]({\r*cos((\a) r)},{\r*sin((\a) r)}) -- ({\r*cos((5*\a) r)},{\h+\r*sin((5*\a) r)});
\draw[thick]({\r*cos((2*\a) r)},{\r*sin((2*\a) r)}) -- ({\r*cos((4*\a) r)},{\h+\r*sin((4*\a) r)});
\draw[thick]({\r*cos((\a) r)},{\r*sin((\a) r)}) -- ({\r*cos((4*\a) r)},{\h+\r*sin((4*\a) r)});
\node at ({\r*cos((5*\a) r)+0.9},{\h+\r*sin((5*\a) r)-0.9}) {1};
\node at ({\r*cos((2*\a) r)-0.9},{\r*sin((2*\a) r)+0.9}) {-3};
\node at (0,-3*\r/5) {$W$};\node at (0,\h-3*\r/5) {$W'$};
\end{scope}
\end{tikzpicture}
\caption{A parallel sequence of length $m=3$.}
\label{fig:threeseq}
\end{figure}

\begin{proof}
   For assertion~\eqref{it:widthle3} we note that a parallel sequence connecting two wheels $W$ and $W'$ projects to a path of $K_{2,2,2}$ which is edge disjoint from $\pi(S)$ and such that any two vertices at distance $2$ of that path are adjacent. From Figure~\ref{fig:zs} we see that no such path of length $4$ on $K_{2,2,2}$ exists, and if the path has length 3, then $\pi(S)$ is necessarily a sphericon, and the endpoints of the path are the midpoints of the straight $2$-paths of that sphericon. In the labelling as on the left in Figure~\ref{fig:zs}, the path is $(-2,3,-1,2)$ (up to symmetry). Then there is a unique labelling of the vertices of $S'$, as given in Figure~\ref{fig:threeseq}.

%

  For assertion~\eqref{it:two3inarow} we note that two consecutive edges of weight $3$ on the boundary of a face $\overline{F}$ of $\oG$ correspond to two parallel sequences of length $3$, each attached to the sphere $S$ of a wheel $W$ of $G$, which immediately follow one another along $S$. Up to reflection, there are two different such configurations depending on whether the parallel sequences share a vertex of $S$ or not, as shown in Figure~\ref{fig:two3s}. By assertion~\eqref{it:widthle3}, the labellings of the vertices in Figure~\ref{fig:two3s} are unique, up to relabelling the vertices of $S$. But then the face $\overline{F}$ cannot be triangular, as the vertices $2$ and $-2$ in the drawing on the left in Figure~\ref{fig:two3s} and the vertices $1$ and $-1$ (respectively, on the right) are not adjacent. 
\end{proof}




\begin{figure}[h]
\centering
\begin{tikzpicture}[scale=0.7]
\begin{scope}
\def \r {2}
\def \a {pi/3}
\def \b {2*pi/9}
\def \h {6}
\def \xs {5}
\def \ys {0}
\newarray\testarray
\readarray{testarray}{-1&-2&1&3&2&-3}
\foreach \z in {0,1} {
\foreach \x in {0,1,...,5} {
\fill  ({\r*cos((\x*\a) r)},{\h*\z+\r*sin((\x*\a) r)}) circle (3pt);
\draw[thick] ({\r*cos(((\x+1)*\a) r)},{\h*\z+\r*sin(((\x+1)*\a) r)}) -- ({\r*cos((\x*\a) r)},{\h*\z+\r*sin((\x*\a) r)}) -- (0,\h*\z);
\ifthenelse{\x=1 \OR \x=2}{\node at ({\r*cos((\x*\a) r)},{\h*\z+\r*sin((\x*\a) r)-0.6}) {\testarray(\x)}}
{
\ifthenelse{\x=4 \OR \x=5}{\node at ({\r*cos((\x*\a) r)},{\h*\z+\r*sin((\x*\a) r)+0.6}) {\testarray(\x)}}{
\ifthenelse{\x=0}{\node at (\r-0.5,\h*\z+0.3) {-3}}{\node at (-\r+0.5,\h*\z+0.3) {1}
}
}
};
\foreach \y in {1,2} {
\ifthenelse{\NOT \(\(0 = \z \AND \(0=\x \OR 1=\x \OR \(2=\x \AND 1=\y\)\OR \(5=\x \AND 2=\y\)\)\) \OR \(1 = \z \AND \(4=\x \OR \(5=\x \AND 1=\y\)\)\)\)
}{\draw[thick] ({\r*cos((\x*\a) r)},{\h*\z+\r*sin((\x*\a) r)}) --
({\r*cos((\x*\a) r)+cos((\x*\a-\a+\b*\y) r)},{\h*\z+\r*sin((\x*\a) r)+sin((\x*\a-\a+\b*\y) r)})
;}{}
}
}
\fill (0,\h*\z) circle (3pt);
}
\draw[thick]({\r*cos((\a) r)},{\r*sin((\a) r)}) -- ({\r*cos((5*\a) r)},{\h+\r*sin((5*\a) r)});
\draw[thick]({\r*cos((2*\a) r)},{\r*sin((2*\a) r)}) -- ({\r*cos((4*\a) r)},{\h+\r*sin((4*\a) r)});
\draw[thick]({\r*cos((\a) r)},{\r*sin((\a) r)}) -- ({\r*cos((4*\a) r)},{\h+\r*sin((4*\a) r)});
%
\foreach \x in {0,1,...,5} {
\fill  ({\xs+\r*cos((\x*\a) r)},{\ys+\r*sin((\x*\a) r)}) circle (3pt);
\draw[thick] ({\xs+\r*cos(((\x+1)*\a) r)},{\ys+\r*sin(((\x+1)*\a) r)}) -- ({\xs+\r*cos((\x*\a) r)},{\ys+\r*sin((\x*\a) r)}) -- (\xs,\ys);
\ifthenelse{\x=1 \OR \x=2}{\node at ({\xs+\r*cos((\x*\a) r)},{\ys+\r*sin((\x*\a) r)-0.6}) {\testarray(\x)}}
{
\ifthenelse{\x=4 \OR \x=5}{\node at ({\xs+\r*cos((\x*\a) r)},{\ys+\r*sin((\x*\a) r)+0.6}) {\testarray(\x)}}{
\ifthenelse{\x=0}{\node at (\xs+\r-0.5,\ys+0.3) {-3}}{\node at (\xs-\r+0.5,\ys+0.3) {1}
}
}
};
\foreach \y in {1,2} {
\ifthenelse{\NOT \( \(2=\x \AND 2=\y\) \OR 3=\x \)
}{\draw[thick] ({\xs+\r*cos((\x*\a) r)},{\ys+\r*sin((\x*\a) r)}) --
({\xs+\r*cos((\x*\a) r)+cos((\x*\a-\a+\b*\y) r)},{\ys+\r*sin((\x*\a) r)+sin((\x*\a-\a+\b*\y) r)})
;}{}
}
}
\fill (\xs,\ys) circle (3pt);
\draw[thick] ({\xs+\r*cos((2*\a) r)},{\ys+\r*sin((2*\a) r)}) -- ({\r*cos((0*\a) r)},{\r*sin((0*\a) r)}) -- ({\xs+\r*cos((3*\a) r)},{\ys+\r*sin((3*\a) r)}) -- ({\r*cos((5*\a) r)},{\r*sin((5*\a) r)});
\node at (\xs/2,\r) {$F$};
\node at (0,-3*\r/5) {$W$};
\end{scope}
\begin{scope}[shift={(16.5,0)}] 
\def \r {2}
\def \a {pi/3}
\def \b {2*pi/9}
\def \h {6}
\def \xs {-5}
\def \ys {2}
\newarray\testarray
\readarray{testarray}{-1&-2&1&3&2&-3}
\foreach \z in {0,1} {
\foreach \x in {0,1,...,5} {
\fill  ({\r*cos((\x*\a) r)},{\h*\z+\r*sin((\x*\a) r)}) circle (3pt);
\draw[thick] ({\r*cos(((\x+1)*\a) r)},{\h*\z+\r*sin(((\x+1)*\a) r)}) -- ({\r*cos((\x*\a) r)},{\h*\z+\r*sin((\x*\a) r)}) -- (0,\h*\z);
\ifthenelse{\x=1 \OR \x=2}{\node at ({\r*cos((\x*\a) r)},{\h*\z+\r*sin((\x*\a) r)-0.6}) {\testarray(\x)}}
{
\ifthenelse{\x=4 \OR \x=5}{\node at ({\r*cos((\x*\a) r)},{\h*\z+\r*sin((\x*\a) r)+0.6}) {\testarray(\x)}}{
\ifthenelse{\x=0}{\node at (\r-0.5,\h*\z+0.3) {-3}}{\node at (-\r+0.5,\h*\z+0.3) {1}
}
}
};
\foreach \y in {1,2} {
\ifthenelse{\NOT \(\(0 = \z \AND \(1=\x \OR 2=\x \OR 3=\x \)\) \OR \(1 = \z \AND \(4=\x \OR \(5=\x \AND 1=\y\)\)\)\)
}{\draw[thick] ({\r*cos((\x*\a) r)},{\h*\z+\r*sin((\x*\a) r)}) --
({\r*cos((\x*\a) r)+cos((\x*\a-\a+\b*\y) r)},{\h*\z+\r*sin((\x*\a) r)+sin((\x*\a-\a+\b*\y) r)})
;}{}
}
}
\fill (0,\h*\z) circle (3pt);
}
\draw[thick]({\r*cos((\a) r)},{\r*sin((\a) r)}) -- ({\r*cos((5*\a) r)},{\h+\r*sin((5*\a) r)});
\draw[thick]({\r*cos((2*\a) r)},{\r*sin((2*\a) r)}) -- ({\r*cos((4*\a) r)},{\h+\r*sin((4*\a) r)});
\draw[thick]({\r*cos((\a) r)},{\r*sin((\a) r)}) -- ({\r*cos((4*\a) r)},{\h+\r*sin((4*\a) r)});
%
\foreach \x in {0,1,...,5} {
\fill  ({\xs+\r*cos((\x*\a) r)},{\ys+\r*sin((\x*\a) r)}) circle (3pt);
\draw[thick] ({\xs+\r*cos(((\x+1)*\a) r)},{\ys+\r*sin(((\x+1)*\a) r)}) -- ({\xs+\r*cos((\x*\a) r)},{\ys+\r*sin((\x*\a) r)}) -- (\xs,\ys);
\ifthenelse{\x=1 \OR \x=2}{\node at ({\xs+\r*cos((\x*\a) r)},{\ys+\r*sin((\x*\a) r)-0.6}) {\testarray(\x)}}
{
\ifthenelse{\x=4 \OR \x=5}{\node at ({\xs+\r*cos((\x*\a) r)},{\ys+\r*sin((\x*\a) r)+0.6}) {\testarray(\x)}}{
\ifthenelse{\x=0}{\node at (\xs+\r-0.5,\ys+0.3) {-3}}{\node at (\xs-\r+0.5,\ys+0.3) {1}
}
}
};
\foreach \y in {1,2} {
\ifthenelse{\NOT \( 0=\x \OR \(5=\x \AND 2=\y\)\)
}{\draw[thick] ({\xs+\r*cos((\x*\a) r)},{\ys+\r*sin((\x*\a) r)}) --
({\xs+\r*cos((\x*\a) r)+cos((\x*\a-\a+\b*\y) r)},{\ys+\r*sin((\x*\a) r)+sin((\x*\a-\a+\b*\y) r)})
;}{}
}
}
\fill (\xs,\ys) circle (3pt);
\draw[thick] ({\r*cos((2*\a) r)},{\r*sin((2*\a) r)}) -- ({\xs+\r*cos((0*\a) r)},{\ys+\r*sin((0*\a) r)}) -- ({\r*cos((3*\a) r)},{\r*sin((3*\a) r)}) -- ({\xs+\r*cos((5*\a) r)},{\ys+\r*sin((5*\a) r)});
\node at (9*\xs/20,5*\ys/3) {$F$};
\node at (0,-3*\r/5) {$W$};
\end{scope} 
\end{tikzpicture}
\caption{Two cases for two consecutive parallel sequences of length $3$ attached to the sphere of a wheel $W$.}
\label{fig:two3s}
\end{figure}
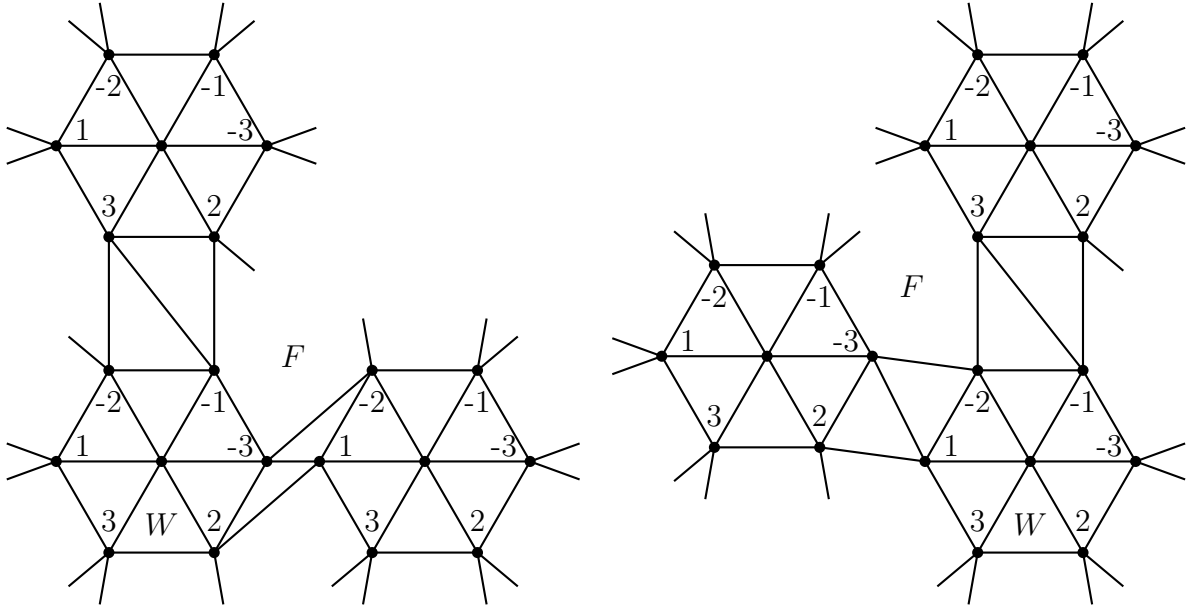


Assertions~\eqref{it:Ghex4} and~\eqref{it:Ghex4no3} of the proposition now follow immediately from Lemma~\ref{l:par3} and the fact that the sum of the weights of the edges of $\oG$ incident to a vertex is $12$, 

To prove assertion~\eqref{it:Ghex533} of the proposition we take a vertex $\vh$ of $\oG$ of degree $5$ and consider all possible weight distributions for the five edges incident to it. As the sum of the weights is $12$ and as each of them is at most $3$ by Lemma~\ref{l:par3}, we obtain the following six possible cyclic sequences of weights, counted in the positive direction of rotation at the vertex $\vh$:

\begin{enumerate}[label=(\roman*),ref=\roman*]
  \item \label{it:33321}
  $(3,3,3,2,1)$,

  \item \label{it:33312}
  $(3,3,3,1,2)$,

  \item \label{it:33231}
  $(3,3,2,3,1)$,

  \item \label{it:33132}
  $(3,3,1,3,2)$,

  \item \label{it:33222}
  $(3,3,2,2,2)$,

  \item \label{it:32322}
  $(3,2,3,2,2)$.
\end{enumerate}

For the sequences~\eqref{it:33321} and \eqref{it:33312}, the claim of assertion~\eqref{it:Ghex533} of the proposition follows from Lemma~\ref{l:par3}\eqref{it:two3inarow}, as a face of $\oG$ having two consecutive edges of weight $3$ on its boundary cannot be triangular.

Consider the sequences~\eqref{it:33231}, \eqref{it:33132}, \eqref{it:33222} and~\eqref{it:32322}. The vertex $\vh$ is the contraction of a wheel $W \subset G$. As there are edges of weight $3$ incident to $\vh$, the projection of the sphere $S \subset W$ to $K_{2,2,2}$ must be a sphericon, by Lemma~\ref{l:par3}. With the labelling as in Figure~\ref{fig:threeseq} (it agrees with the labelling in Figure~\ref{fig:two3s} and on the left in Figure~\ref{fig:zs}), the $2$-paths whose middle vertices are $\pm 2$ project to straight paths of $K_{2,2,2}$, see Remark~\ref{rem:svsz}. Then from Remark~\ref{rem:stli} it follows that no parallel sequence attached to $S$ may have two edges incident to a vertex $\pm 2$, as the two other endpoints of these edges are not adjacent. We obtain, with the labelling as in Figure~\ref{fig:threeseq}, that the two external edges incident to the vertex $2$ belong to two different parallel sequences attached to $S$. It follows that the two external edges incident to the vertex $-3$ and one of the external edges incident to the vertex $2$ belong to the same parallel sequence of length $3$ attached to $S$, as no parallel sequences of lengths $1$ or $2$ attached to $S$ can be consecutive in the sequences~\eqref{it:33231}, \eqref{it:33132}, \eqref{it:33222} and~\eqref{it:32322}. It follows that every parallel sequence of length $3$ attached to $S$ must have another parallel sequence of length $3$ immediately following it in the cyclic order, and that these two parallel sequences do not share a common vertex of $S$. This shows that neither of the sequences~\eqref{it:33231}, \eqref{it:33132} nor \eqref{it:32322} may occur.

Hence the only remaining case is the sequence~\eqref{it:33222}, for which the above argument shows that the two consecutive parallel sequences of length $3$ attached to $S$ do not share a vertex, and so may only be attached to $S$ as on the left in Figure~\ref{fig:two3s}. From Lemma~\ref{l:par3}\eqref{it:two3inarow} we already know that the face of $\oG$ having two consecutive edges of weight $3$ on its boundary is not triangular, and so it suffices to show that at least one of the other four faces of $\oG$ having $\vh$ on its boundary is not triangular. Assume that all of them are triangular. This means that each of the corresponding faces of $G$, before contracting to $\oG$, has exactly three external edges on its boundary (and any number of edges belonging to spheres).

We start with the configuration on the left in Figure~\ref{fig:two3s}. Note that the $6$ remaining external edges attached to $S$ form three consecutive parallel sequences of length $2$. In particular, two external edges attached to $S$, one incident to vertex $2$ and one, to vertex $3$ belong to a parallel sequence of length $2$. We have two possible cases: either these two edges share a common endpoint or not (as in Figure~\ref{fig:parseq}, in the middle or on the left, respectively).

In the first case, the common endpoint of the two edges is a $5$-vertex with label $-1$ (as this is the only remaining label for vertex $2$ of $W$); it cannot belong to $W$ by Lemma~\ref{l:nobade}, and it cannot belong to any of the other two wheels on the left in Figure~\ref{fig:two3s}, as we get a face with a ``very long" boundary. Furthermore, the second external edge incident to vertex $3$ of $W$ ends at a $5$-vertex with label $-2$. That edge belongs to a parallel sequence of length $2$ attached to $S$, the second one being incident to vertex $1$ of $S$. The other endpoints of two external edges incident to that vertex have labels $2$ and $-3$, and so the parallel sequence is of the type shown on the left in Figure~\ref{fig:parseq}; as the vertices $2$ and $-2$ are not connected, the parallel edges must be $(3,-2)$ and $(1,-3)$. We arrive at the drawing as on the left in Figure~\ref{fig:33222first} (where for clarity, we do not show the interior vertices and edges of the wheels). But now for the face $F'$ to have no more than three external edges on its boundary, there must be an external edge joining the vertex $-2$ of $W_1$ with the vertex $u$ of $W_2$. As the vertex $-2$ of $W_1$ is already connected to vertices $\pm 3$, we obtain $u = \pm 1$, a contradiction: $W_2$ already has a vertex $-1$, and vertices $1$ and $-1$ cannot be connected.

In the second case, the argument is similar. We start with the drawing as on the left in Figure~\ref{fig:two3s}, with two external edges attached to $W$, at the vertex $2$ and at the vertex $3$ belonging to a parallel sequence of $2$ edges with no common endpoint (as on the left in Figure~\ref{fig:parseq}). The endpoint of the external edge incident to the vertex $2$ must be a $5$-vertex of a wheel $W_2$ with the label $-1$, and then the endpoint of the parallel external edge incident to the vertex $3$ must be a $5$-vertex of the wheel $W_2$ adjacent to its vertex $-1$, and so having label $-2$. Furthermore, the second external edge incident to the vertex $3$ of $W$ ends at a $5$-vertex with the label $-1$ of a wheel $W_1$ (note that $W_1 \ne W$ by Lemma~\ref{l:nobade} and that $W_1 \ne W_2$, as the boundary is longer than $4$). That edge belongs to a parallel sequence of length $2$ between $W$ and $W_1$, the second one being incident to the vertex $1$ of $W$. The endpoint of that edge lying on $W_1$ cannot have label $-1$, and so the parallel sequence of edges is of the type as on the left in Figure~\ref{fig:parseq}, which gives the drawing as on the right in Figure~\ref{fig:33222first} (we do not show the interior vertices and edges of the wheels). The contradiction follows from considering the face $F'$: for $F'$, to have no more than three external edges on its boundary, there must be an external edge joining the vertex $-1$ of $W_1$ with the vertex $-2$ of $W_2$ which is clearly impossible, as the vertex $-2$ of $W_2$ is already connected to the vertex $-1$ in $W_2$.

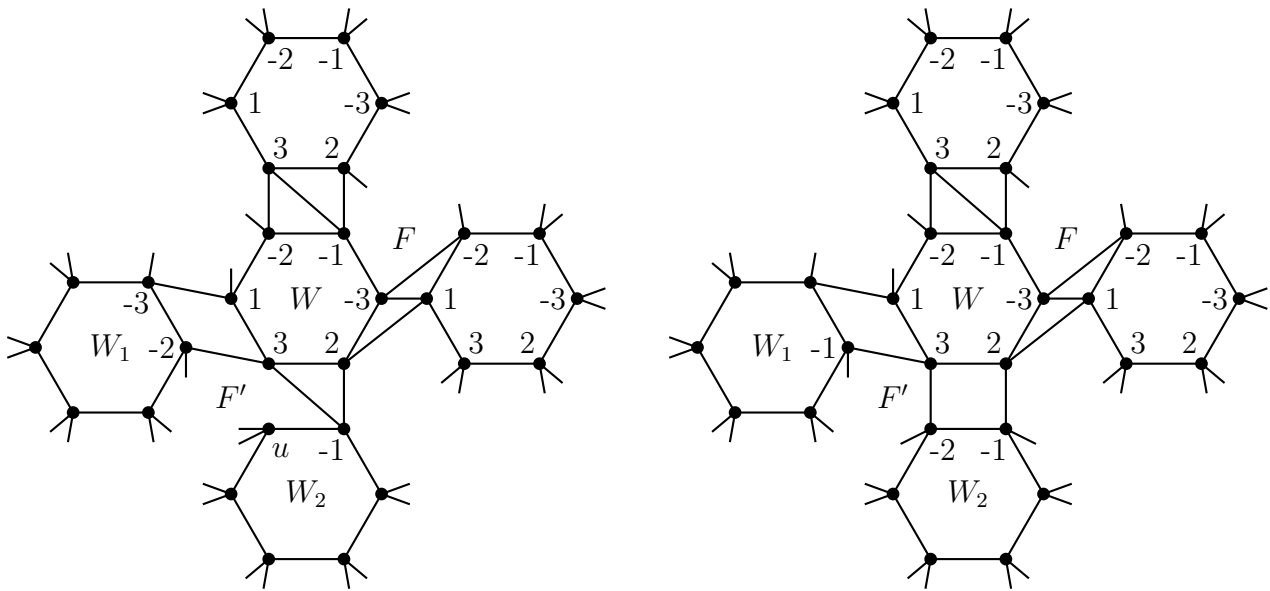
\begin{figure}[h]
\centering
\begin{tikzpicture}[scale=0.8]
\begin{scope}
\def \r {1.25}
\def \a {pi/3}
\def \b {2*pi/9}
\def \h {3.25}
\def \xs {3.25}
\def \ys {0}
\def \ger {0.5}
\def \rla {0.85}
\newarray\testarray
\readarray{testarray}{-1&-2&1&3&2&-3}
\foreach \z in {0,1} {
\foreach \x in {0,1,...,5} {
\fill  ({\r*cos((\x*\a) r)},{\h*\z+\r*sin((\x*\a) r)}) circle (3pt);
\draw[thick] ({\r*cos(((\x+1)*\a) r)},{\h*\z+\r*sin(((\x+1)*\a) r)}) -- ({\r*cos((\x*\a) r)},{\h*\z+\r*sin((\x*\a) r)}); 
\ifthenelse{\x=1 \OR \x=2}{\node at ({\rla*cos((\x*\a) r)},{\h*\z+\rla*sin((\x*\a) r)}) {\testarray(\x)}}
{
\ifthenelse{\x=4 \OR \x=5}{\node at ({\rla*cos((\x*\a) r)},{\h*\z+\rla*sin((\x*\a) r)}) {\testarray(\x)}}{
\ifthenelse{\x=0}{\node at (\rla,\h*\z) {-3}}{\node at (-\rla,\h*\z) {1}
}
}
};
\foreach \y in {1,2} {
\ifthenelse{
\(\z=1 \AND \NOT \(4=\x \OR \(5=\x \AND 1=\y\)\)\)
\OR \(\z=0 \AND \x=2 \AND \y=2\)
}{\draw[thick] ({\r*cos((\x*\a) r)},{\h*\z+\r*sin((\x*\a) r)}) --
({\r*cos((\x*\a) r)+\ger*cos((\x*\a-\a+\b*\y) r)},{\h*\z+\r*sin((\x*\a) r)+\ger*sin((\x*\a-\a+\b*\y) r)})
;}{}
}
}
}
\draw[thick](-\r,0) -- (-\r,\ger);
\draw[thick]({\r*cos((\a) r)},{\r*sin((\a) r)}) -- ({\r*cos((5*\a) r)},{\h+\r*sin((5*\a) r)});
\draw[thick]({\r*cos((2*\a) r)},{\r*sin((2*\a) r)}) -- ({\r*cos((4*\a) r)},{\h+\r*sin((4*\a) r)});
\draw[thick]({\r*cos((\a) r)},{\r*sin((\a) r)}) -- ({\r*cos((4*\a) r)},{\h+\r*sin((4*\a) r)});
%
\foreach \x in {0,1,...,5} {
\fill  ({\xs+\r*cos((\x*\a) r)},{\ys+\r*sin((\x*\a) r)}) circle (3pt);
\draw[thick] ({\xs+\r*cos(((\x+1)*\a) r)},{\ys+\r*sin(((\x+1)*\a) r)}) -- ({\xs+\r*cos((\x*\a) r)},{\ys+\r*sin((\x*\a) r)}); 
\ifthenelse{\x=1 \OR \x=2}{\node at ({\xs+\rla*cos((\x*\a) r)},{\ys+\rla*sin((\x*\a) r)}) {\testarray(\x)}}
{
\ifthenelse{\x=4 \OR \x=5}{\node at ({\xs+\rla*cos((\x*\a) r)},{\ys+\rla*sin((\x*\a) r)}) {\testarray(\x)}}{
\ifthenelse{\x=0}{\node at (\xs+\rla,\ys) {-3}}{\node at (\xs-\rla,\ys) {1}
}
}
};
\foreach \y in {1,2} {
\ifthenelse{\NOT \( \(2=\x \AND 2=\y\) \OR 3=\x \)
}{\draw[thick] ({\xs+\r*cos((\x*\a) r)},{\ys+\r*sin((\x*\a) r)}) --
({\xs+\r*cos((\x*\a) r)+\ger*cos((\x*\a-\a+\b*\y) r)},{\ys+\r*sin((\x*\a) r)+\ger*sin((\x*\a-\a+\b*\y) r)})
;}{}
}
}
\draw[thick] ({\xs+\r*cos((2*\a) r)},{\ys+\r*sin((2*\a) r)}) -- ({\r*cos((0*\a) r)},{\r*sin((0*\a) r)}) -- ({\xs+\r*cos((3*\a) r)},{\ys+\r*sin((3*\a) r)}) -- ({\r*cos((5*\a) r)},{\r*sin((5*\a) r)});
\node at (0.5*\xs,0.8*\r) {$F$};
\node at (-\r,-\h/2) {$F'$};
\node at (0,0) {$W$};
%
%
\def \z {-1}
\foreach \x in {0,1,...,5} {
\fill  ({\r*cos((\x*\a) r)},{\h*\z+\r*sin((\x*\a) r)}) circle (3pt);
\draw[thick] ({\r*cos(((\x+1)*\a) r)},{\h*\z+\r*sin(((\x+1)*\a) r)}) -- ({\r*cos((\x*\a) r)},{\h*\z+\r*sin((\x*\a) r)}); 
\ifthenelse{\x=1}{\node at ({\rla*cos((\x*\a) r)},{\h*\z+\rla*sin((\x*\a) r)}) {\testarray(\x)}}{};
\ifthenelse{\x=2}{\node at ({\rla*cos((\x*\a) r)},{\h*\z+\rla*sin((\x*\a) r)}) {$u$}}{};
\foreach \y in {1,2} {
\ifthenelse{\NOT \(1=\x \OR 2=\x \)}
{\draw[thick] ({\r*cos((\x*\a) r)},{\h*\z+\r*sin((\x*\a) r)}) --
({\r*cos((\x*\a) r)+\ger*cos((\x*\a-\a+\b*\y) r)},{\h*\z+\r*sin((\x*\a) r)+\ger*sin((\x*\a-\a+\b*\y) r)})
;}{}
}
}
\draw[thick]
({\r*cos((2*\a) r)-\ger},{\h*\z+\r*sin((2*\a) r)}) --
({\r*cos((2*\a) r)},{\h*\z+\r*sin((2*\a) r)}) --
({\r*cos((2*\a) r)-\ger},{\h*\z+\r*sin((2*\a) r)-\ger/2});
\node at (0,-\h) {$W_2$};
%
%
\def \z {-0.25}
\foreach \x in {0,1,...,5} {
\fill  ({\r*cos((\x*\a) r)-\h},{\h*\z+\r*sin((\x*\a) r)}) circle (3pt);
\draw[thick] ({\r*cos(((\x+1)*\a) r)-\h},{\h*\z+\r*sin(((\x+1)*\a) r)}) -- ({\r*cos((\x*\a) r)-\h},{\h*\z+\r*sin((\x*\a) r)}); 
\ifthenelse{\x=1}{\node at ({\rla*cos((\x*\a) r)-\h},{\h*\z+\rla*sin((\x*\a) r)}) {-3}}{};
\ifthenelse{\x=0}{\node at (\rla-\h,\h*\z) {-2}}{};
\foreach \y in {1,2} {
\ifthenelse{\NOT \(\(1=\x \AND \y=1\) \OR 0=\x\)}
{\draw[thick] ({\r*cos((\x*\a) r)-\h},{\h*\z+\r*sin((\x*\a) r)}) --
({\r*cos((\x*\a) r)+\ger*cos((\x*\a-\a+\b*\y) r)-\h},{\h*\z+\r*sin((\x*\a) r)+\ger*sin((\x*\a-\a+\b*\y) r)})
;}{}
}
}
\draw[thick] ({\r-\h},{\h*\z}) -- ({\r-\h},{\h*\z-\ger});
\node at (-\h,\h*\z) {$W_1$};
%
%
\draw[thick] ({\r*cos((5*\a) r)},{\r*sin((5*\a) r)}) -- ({\r*cos((\a) r)},{-\h+\r*sin((\a) r)}) -- ({\r*cos((4*\a) r)},{\r*sin((4*\a) r)}) --
({\r-\h},{\h*\z});
\draw[thick] ({\r*cos((\a) r)-\h},{\h*\z+\r*sin((\a) r)}) -- ({-\r},0);
\end{scope}
\begin{scope}[shift={(11,0)}]
\def \r {1.25}
\def \a {pi/3}
\def \b {2*pi/9}
\def \h {3.25}
\def \xs {3.25}
\def \ys {0}
\def \ger {0.5}
\def \rla {0.85}
\newarray\testarray
\readarray{testarray}{-1&-2&1&3&2&-3}
\foreach \z in {0,1} {
\foreach \x in {0,1,...,5} {
\fill  ({\r*cos((\x*\a) r)},{\h*\z+\r*sin((\x*\a) r)}) circle (3pt);
\draw[thick] ({\r*cos(((\x+1)*\a) r)},{\h*\z+\r*sin(((\x+1)*\a) r)}) -- ({\r*cos((\x*\a) r)},{\h*\z+\r*sin((\x*\a) r)}); 
\ifthenelse{\x=1 \OR \x=2}{\node at ({\rla*cos((\x*\a) r)},{\h*\z+\rla*sin((\x*\a) r)}) {\testarray(\x)}}
{
\ifthenelse{\x=4 \OR \x=5}{\node at ({\rla*cos((\x*\a) r)},{\h*\z+\rla*sin((\x*\a) r)}) {\testarray(\x)}}{
\ifthenelse{\x=0}{\node at (\rla,\h*\z) {-3}}{\node at (-\rla,\h*\z) {1}
}
}
};
\foreach \y in {1,2} {
\ifthenelse{
\(\z=1 \AND \NOT \(4=\x \OR \(5=\x \AND 1=\y\)\)\)
\OR \(\z=0 \AND \x=2 \AND \y=2\)
}{\draw[thick] ({\r*cos((\x*\a) r)},{\h*\z+\r*sin((\x*\a) r)}) --
({\r*cos((\x*\a) r)+\ger*cos((\x*\a-\a+\b*\y) r)},{\h*\z+\r*sin((\x*\a) r)+\ger*sin((\x*\a-\a+\b*\y) r)})
;}{}
}
}
}
\draw[thick](-\r,0) -- (-\r,\ger);
\draw[thick]({\r*cos((\a) r)},{\r*sin((\a) r)}) -- ({\r*cos((5*\a) r)},{\h+\r*sin((5*\a) r)});
\draw[thick]({\r*cos((2*\a) r)},{\r*sin((2*\a) r)}) -- ({\r*cos((4*\a) r)},{\h+\r*sin((4*\a) r)});
\draw[thick]({\r*cos((\a) r)},{\r*sin((\a) r)}) -- ({\r*cos((4*\a) r)},{\h+\r*sin((4*\a) r)});
%
\foreach \x in {0,1,...,5} {
\fill  ({\xs+\r*cos((\x*\a) r)},{\ys+\r*sin((\x*\a) r)}) circle (3pt);
\draw[thick] ({\xs+\r*cos(((\x+1)*\a) r)},{\ys+\r*sin(((\x+1)*\a) r)}) -- ({\xs+\r*cos((\x*\a) r)},{\ys+\r*sin((\x*\a) r)}); 
\ifthenelse{\x=1 \OR \x=2}{\node at ({\xs+\rla*cos((\x*\a) r)},{\ys+\rla*sin((\x*\a) r)}) {\testarray(\x)}}
{
\ifthenelse{\x=4 \OR \x=5}{\node at ({\xs+\rla*cos((\x*\a) r)},{\ys+\rla*sin((\x*\a) r)}) {\testarray(\x)}}{
\ifthenelse{\x=0}{\node at (\xs+\rla,\ys) {-3}}{\node at (\xs-\rla,\ys) {1}
}
}
};
\foreach \y in {1,2} {
\ifthenelse{\NOT \( \(2=\x \AND 2=\y\) \OR 3=\x \)
}{\draw[thick] ({\xs+\r*cos((\x*\a) r)},{\ys+\r*sin((\x*\a) r)}) --
({\xs+\r*cos((\x*\a) r)+\ger*cos((\x*\a-\a+\b*\y) r)},{\ys+\r*sin((\x*\a) r)+\ger*sin((\x*\a-\a+\b*\y) r)})
;}{}
}
}
\draw[thick] ({\xs+\r*cos((2*\a) r)},{\ys+\r*sin((2*\a) r)}) -- ({\r*cos((0*\a) r)},{\r*sin((0*\a) r)}) -- ({\xs+\r*cos((3*\a) r)},{\ys+\r*sin((3*\a) r)}) -- ({\r*cos((5*\a) r)},{\r*sin((5*\a) r)});
\node at (0.5*\xs,0.8*\r) {$F$};
\node at (-\r,-\h/2) {$F'$};
\node at (0,0) {$W$};
%
%
\def \z {-1}
\foreach \x in {0,1,...,5} {
\fill  ({\r*cos((\x*\a) r)},{\h*\z+\r*sin((\x*\a) r)}) circle (3pt);
\draw[thick] ({\r*cos(((\x+1)*\a) r)},{\h*\z+\r*sin(((\x+1)*\a) r)}) -- ({\r*cos((\x*\a) r)},{\h*\z+\r*sin((\x*\a) r)}); 
\ifthenelse{\x=1}{\node at ({\rla*cos((\x*\a) r)},{\h*\z+\rla*sin((\x*\a) r)}) {\testarray(\x)}}{};
\ifthenelse{\x=2}{\node at ({\rla*cos((\x*\a) r)},{\h*\z+\rla*sin((\x*\a) r)}) {-2}}{};
\foreach \y in {1,2} {
\ifthenelse{\NOT \(1=\x \OR 2=\x \)}
{\draw[thick] ({\r*cos((\x*\a) r)},{\h*\z+\r*sin((\x*\a) r)}) --
({\r*cos((\x*\a) r)+\ger*cos((\x*\a-\a+\b*\y) r)},{\h*\z+\r*sin((\x*\a) r)+\ger*sin((\x*\a-\a+\b*\y) r)})
;}{}
}
}
\draw[thick] ({\r*cos((2*\a) r)},{\h*\z+\r*sin((2*\a) r)}) -- ({\r*cos((2*\a) r)-\ger},{\h*\z+\r*sin((2*\a) r)-\ger/2});
\draw[thick] ({\r*cos((\a) r)},{\h*\z+\r*sin((\a) r)}) -- ({\r*cos((\a) r)+\ger},{\h*\z+\r*sin((\a) r)-\ger/2});
\node at (0,-\h) {$W_2$};
%
%
\def \z {-0.25}
\foreach \x in {0,1,...,5} {
\fill  ({\r*cos((\x*\a) r)-\h},{\h*\z+\r*sin((\x*\a) r)}) circle (3pt);
\draw[thick] ({\r*cos(((\x+1)*\a) r)-\h},{\h*\z+\r*sin(((\x+1)*\a) r)}) -- ({\r*cos((\x*\a) r)-\h},{\h*\z+\r*sin((\x*\a) r)}); 
\ifthenelse{\x=0}{\node at (\rla-\h,\h*\z) {-1}}{};
\foreach \y in {1,2} {
\ifthenelse{\NOT \(\(1=\x \AND \y=1\) \OR 0=\x\)}
{\draw[thick] ({\r*cos((\x*\a) r)-\h},{\h*\z+\r*sin((\x*\a) r)}) --
({\r*cos((\x*\a) r)+\ger*cos((\x*\a-\a+\b*\y) r)-\h},{\h*\z+\r*sin((\x*\a) r)+\ger*sin((\x*\a-\a+\b*\y) r)})
;}{}
}
}
\draw[thick] ({\r-\h},{\h*\z}) -- ({\r-\h},{\h*\z-\ger});
\node at (-\h,\h*\z) {$W_1$};
%
%
\draw[thick] ({\r*cos((5*\a) r)},{\r*sin((5*\a) r)}) -- ({\r*cos((\a) r)},{-\h+\r*sin((\a) r)});
\draw[thick] ({\r*cos((2*\a) r)},{-\h+\r*sin((2*\a) r)}) -- ({\r*cos((4*\a) r)},{\r*sin((4*\a) r)}) --
({\r-\h},{\h*\z});
\draw[thick] ({\r*cos((\a) r)-\h},{\h*\z+\r*sin((\a) r)}) -- ({-\r},0);
\end{scope}
\end{tikzpicture}
\caption{Sequence \eqref{it:33222}: the face $F'$ has more than $3$ external edges.}
\label{fig:33222first}
\end{figure}

This completes the proof of the proposition and hence of Theorem~\ref{th:noperf}.
\end{proof}

\section{Structural properties of the semi-cover} 
\label{s:all9thm}

In~\cite[Section~4]{ANP}, starting with a purported planar cover $G$ of $K_{1,2,2,2}$ we constructed a specific semi-cover $G'$ (following the construction introduced in~\cite[Section~2]{Hli3}) enjoying several very restrictive properties stated below in Lemma~\ref{l:Hfaces}. The proof of Negami's Conjecture would follow from the non-existence of such a semi-cover~\cite[Conjecture~2]{ANP}. In this section, we establish some further properties of such semi-covers and prove non-existence under a certain additional hypothesis (see Theorem~\ref{th:notall9} below). 

We briefly recall the construction from~\cite[Section~4]{ANP}. Assume $K_{1,2,2,2}$ has a finite planar cover. Out of all such covers, we choose one with the following properties:

\begin{enumerate}[label=(\Alph*),ref=\Alph*]
    \item \label{it:minfold}
    It has the smallest fold number.

    \item \label{it:max3}
    Out of those covers satisfying~\eqref{it:minfold}, it has the maximal number of triangular faces.

    \item \label{it:shortf}
    Any short cycle covering a triangle in $K_{1,2,2,2}$ bounds a face.~

    \item \label{it:longf}
     No long cycle covering a triangle in $K_{1,2,2,2}$ bounds a face.
\end{enumerate}
Note that property~\eqref{it:shortf} is guaranteed by Lemma~\ref{lem:shortcycle}, and that the possibility to choose a planar cover with property~\eqref{it:longf} among the planar covers having the other three properties follows from~\cite[Lemma~3]{ANP}.

A cover $G$ of $K_{1, 2, 2, 2}$ satisfying conditions~\eqref{it:minfold}, \eqref{it:max3}, \eqref{it:shortf} and \eqref{it:longf} must contain at least one long cycle covering an octahedral $3$-cycle. Among these long cycles we choose a cycle $C$ which contains no other such long cycle inside the closed domain that it bounds; call this interior domain $F$.

We adopt the labelling convention for the vertices of $G$ and of $K_{1,2,2,2}$ as in Section~\ref{sec:prel}. Without loss of generality, we can assume that $C$ covers  the $3$-cycle $(1,2,3)$. The domain $F$ is not a face of $G$ by condition \eqref{it:longf}, and hence there are some vertices labelled $0, -1,-2$ and $-3$ lying in $F$. We define $H$ to be a connected component of the lift of the $K_4$ subgraph on the vertices $0,-1,-2,$ and $-3$ lying inside the domain $F$ that contains no other such component in its interior. Moreover, we define $G'$ to be the semi-cover of $K_{1,2,2,2}$ consisting of $H$ and all the edges and vertices of $G$ that lie in the internal faces of $H$.

Clearly, the length of the bounding cycle of every internal face of $H$ is a multiple of $3$, and some of these cycles must be long. One can prove that $H$ contains no hexagonal cycles (see Lemma~\ref{l:Hfaces}\eqref{it:no6face} below). 

The main results of this section is as follows.

\begin{theorem} \label{th:notall9}
  Let $G'$ be a planar semi-cover of $K_{1,2,2,2}$ constructed above. Then the subgraph $H$ cannot have all its internal, non-triangular faces of length $9$.
\end{theorem}

The proof of Theorem given in the rest of this section is quite technically involved. An important tool in the proof is a sequence of surgeries transforming the given semi-cover $G'$ and the subgraph $H$ to the ones which are smaller or which enjoy some specific structural properties. It may a priori happen that the semi-cover obtained as the result of such a surgery is no longer a semi-cover coming from a ``full'' planar cover $G$. 

To overcome this, in Lemma~\ref{l:Hfaces} we introduce a long, but comprehensive list of properties which the semi-cover $G'$ and the subgraph $H$ from Theorem~\ref{th:notall9} enjoy, and then prove in Theorem~\hyperref[{th:notall9'}]{$3'$} below that for any semi-cover $G'$ and the subgraph $H$ with these properties (but \textit{not necessarily} coming from a full cover $G$) the claim of Theorem~\ref{th:notall9} holds.

\begin{lemma}\cite[Lemma~4]{ANP} \label{l:Hfaces}
The semi-cover $G'$ and the subgraph $H\subset G'$ introduced above have the following properties.

\begin{enumerate}[label=\emph{(\alph*)},ref=\alph*]
    \item \label{it:s9H}
    $H \subset G'$ is the lift of the subgraph $K_4 \subset K_{1,2,2,2}$ whose vertices are $0, -1, -2, -3$, 
    $H$ is connected, the restriction of the projection map to $H$ is a (genuine) cover of $K_4$, and the outer boundary of $G'$ is a cycle of $H$.

    \item \label{it:s9tri}    
    All $3$-cycles of $G'$ are facial.
    
    \item \label{it:Htri} Any cycle in $G'$ covering an octahedral $3$-cycle is a triangular face. 

    \item \label{it:s9inn3}
    Every component of the lift of an octahedral $3$-cycle in $G'$ that is not a $3$-cycle is a path that starts and ends on the boundary of $G'$. Such a path cannot cover $(1,2,3)$ or $(-1,-2,-3)$, so the lift of these triangles in $G'$ consists only of triangles.

    
    \item \label{it:noK4}
    $H$ is not isomorphic to $K_4$. 

    \item \label{it:notouter}
    $G'$ contains at least one triangle labelled $(1,2,3)$. 


    \item \label{it:2conn}
    $H$ is $2$-connected, and so every internal face of $H$ is homeomorphic to a disc. 

    \item \label{it:mult3}
    The cyclic order of vertex labels around any non-triangular face of $H$ is $0,a_1,b_1,0,$ $a_2,b_2, \dots, 0,a_m,b_m$, where $a_i, b_i \in \{-1,-2,-3\}$; in particular, the length of any facial cycle is a multiple of $3$.

    \item \label{it:no6face}
    No internal face of $H$ is hexagonal. 

    \item \label{it:above} 
    If an internal face of $H$ is $3m$-gonal, where $m \ge 1$, and contains $t$ triangles labelled $(1,2,3)$, then $t < \frac23 m$.
\end{enumerate}
\end{lemma}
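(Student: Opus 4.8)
The plan is to localise the statement to a single fixed internal $3m$-gonal face $F$ of $H$ and reduce it to a counting inequality between the octahedral triangles inside $F$ and the boundary cycle $\partial F$. First I would determine what can sit inside $F$. Since $H=\pi^{-1}(K_4)$ contains every vertex of $G'$ labelled $0,-1,-2,-3$, no such vertex lies strictly inside $F$; hence every vertex interior to $F$ carries a label in $\{1,2,3\}$ and, being an interior vertex of $G'$, is a genuine $5$-vertex of the cover. Reading off the neighbours in $K_{1,2,2,2}$ (for a vertex labelled $1$ these are $0,2,-2,3,-3$), exactly two of them are again interior (here $2$ and $3$) while the remaining three lie in $H$, hence on $\partial F$. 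Thus the interior vertices induce a $2$-regular graph, each component of which is a cycle covering the octahedral triangle $(1,2,3)$; being confined to the interior of $F$ it cannot reach the outer boundary of $G'$, so by \eqref{it:s9inn3} it is a short $3$-cycle. Consequently the interior of $F$ is exactly a disjoint union of $t$ triangles labelled $(1,2,3)$, i.e. $3t$ interior vertices, each joined to $\partial F$ by precisely three edges (one to a vertex labelled $0$ and two to vertices labelled in $\{-1,-2,-3\}$).

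Next I would run the counting. Each interior vertex sends exactly two edges to the $\{-1,-2,-3\}$-vertices of $\partial F$, giving $6t$ such edges. On the other side, $\partial F$ carries $2m$ vertices labelled in $\{-1,-2,-3\}$, and each of them has only two neighbours whose labels lie in $\{1,2,3\}$ (for $-1$ these are $2$ and $3$), so each absorbs at most two of these edges, a total capacity of $4m$. This yields $6t\le 4m$, that is $t\le\tfrac23 m$.

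For the strict inequality I would first dispose of the generic case by integrality: when $3\nmid m$ the number $\tfrac23 m$ is not an integer, so $t\le\tfrac23 m$ already forces $t<\tfrac23 m$ (in particular the cases $m=1$ and $m=2$, the latter excluded outright by \eqref{it:no6face}, are immediate). Only the case $3\mid m$ remains, where I must rule out the equality $3t=2m$. At equality every $\{-1,-2,-3\}$-vertex of $\partial F$ must send both of its $\{1,2,3\}$-edges into $F$; since its other edges either lie on $\partial F$ or leave $F$, these two edges are consecutive in its rotation and, being edges of a common octahedral triangle, bound a triangular face by \eqref{it:s9tri}. Hence every such boundary vertex \emph{caps} an edge of an interior triangle, and counting caps against the $3t$ triangle edges (each capped at most once) forces every edge of every interior triangle to be capped. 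This pins down the surroundings of each interior triangle $T=(1,2,3)$ to the rigid ``anti-wheel'' bounded by the hexagon $(-1,2,-3,1,-2,3)$ and fixes the rotation at each interior vertex.

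From this rigidity I would derive a contradiction. Using \eqref{it:s9tri} once more, the three vertices of $T$ must have pairwise distinct neighbours labelled $0$: if two coincided in a vertex $z$, the resulting $3$-cycle through $z$ would have to be facial, which is incompatible with the forced rotation (and the fully degenerate case, $z$ adjacent to all of $T$, is impossible because then the sphere $S(z)$ would contain the triangle $T$, contradicting Lemma~\ref{l:nobade}). I would then track the faces flanking the caps on the side of their $0$-neighbours and show these forced coincidences cannot be realised consistently around $\partial F$. I expect the main obstacle to be exactly this last step: the cap faces and the triangle $T$ are triangular for free (being $3$-cycles), but the flanking faces are not, and one must combine the facial-triangle property with the rotation constraints to show they are triangular—or else argue directly—so that the required adjacencies among the capping $5$-vertices and the $0$-vertices actually occur and collide. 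The tightest instance $m=3$ is encouraging here, since Euler's formula forces the interior of $F$ to be a full triangulation in that case, making all the relevant faces triangular; extending the contradiction uniformly to all $m$ divisible by $3$ is where the real work lies.
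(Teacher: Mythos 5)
Two genuine gaps, one of scope and one of substance.

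First, scope. The statement is the full ten-part Lemma~\ref{l:Hfaces}, whose main content is the \emph{existence} of the semi-cover $G'$ and of $H$ with all of the listed properties; the present paper does not prove it but imports it wholesale from \cite[Lemma~4]{ANP}, where $G'$ is built from a hypothetical planar cover by Hlin\v{e}n\'{y}'s cutting construction and where \eqref{it:s9H}--\eqref{it:above} are verified, partly under the ``maximum number of triangles'' normalisation that the paper recalls just before Proposition~\ref{p:empty}. Your proposal never constructs $G'$ or $H$: it takes \eqref{it:s9H}--\eqref{it:no6face} as hypotheses and attacks only \eqref{it:above}. So nine of the ten assertions, including the existence claim and structural facts such as \eqref{it:2conn}, \eqref{it:mult3}, \eqref{it:no6face} that your own counting then leans on, are simply not addressed; even a complete argument for \eqref{it:above} would not be a proof of the lemma.

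Second, within \eqref{it:above} itself the decisive step is missing. Your double count is correct and clean: by \eqref{it:s9H} the interior of an internal face $F$ contains only positively labelled vertices, by \eqref{it:s9inn3} these form disjoint $(1,2,3)$-triangles, and the $6t$ edges they send to negatively labelled boundary vertices are absorbed at most two apiece by the $2m$ such vertices provided by \eqref{it:mult3}; integrality then gives strictness whenever $3\nmid m$. But the whole force of \eqref{it:above} lies in the excluded equality case $3t=2m$ --- Lemma~\ref{l:above} (one triangle per full $9$-face) is exactly the instance $m=3$, where $t=2$ must be ruled out --- and you leave that case open, as you acknowledge. The local rigidity you derive at equality (every triangle edge capped, forced rotations, three distinct $0$-neighbours) is correct but yields no contradiction by itself; what is needed is a global planarity argument, for instance: each triangle together with its three caps meets $\partial F$ in three points and cuts the disc into three regions, the forced rotations push its three $0$-edges into the three \emph{different} regions, while any other such configuration lies inside a \emph{single} region and must find its three distinct $0$-vertices on that region's boundary arc alone; already for $m=3$ this is impossible, since the first configuration places one $0$-vertex in each of its three regions, and for general $m$ divisible by $3$ an innermost-region induction is required. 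None of this appears in the proposal. Finally, a smaller but real flaw: the assertion that the two interior edges at a saturated boundary vertex, ``being edges of a common octahedral triangle, bound a triangular face by \eqref{it:s9tri}'' is a non sequitur as written --- two consecutive lifts of edges of an octahedral triangle need not close into a $3$-cycle. One must first argue that, at equality, the component of the lift of the octahedral $3$-cycle $(-1,2,3)$ through that vertex cannot be a path (every vertex on it has both required neighbours), hence is a cycle, and only then do \eqref{it:s9inn3} and \eqref{it:Htri} make it a facial triangle.
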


For the remainder of this section, let $G'$ be a semi-cover with subgraph $H$, both satisfying the properties listed in Lemma~\ref{l:Hfaces}. 
We call a \emph{bead} a labelled subgraph of $H$ shown on the left in Figure~\ref{fig:empty9}, where  $\{i,j,k\}=\{1,2,3\}$. We say that a face of $H$ is \emph{empty} if it is internal and contains no $3$-cycles $(1,2,3)$ inside, and is \emph{full} otherwise. Clearly, all triangular faces of $H$ are empty by~Lemma~\ref{l:Hfaces}\eqref{it:s9tri} and every vertex of $H$ is adjacent to some full face.

The following proposition relies on the assumption that $G'$ comes from a cover $G$ of $K_{1,2,2,2}$ with the maximum possible number of triangles (property~\eqref{it:max3}). This was an assumption used in the derivation of Lemma~\ref{l:Hfaces}, but which we do not apply to the semi-covers $G'$ in general.

\begin{proposition} \label{p:empty}
Suppose $G'$ has the maximum number of triangles among all semi-covers satisfying Lemma~\ref{l:Hfaces} with a given number of vertices.  Then any non-triangular empty (internal) face of $H$ can be assumed to have length $9$. Moreover, the only possible labelling of its boundary, up to permutation, is the one on the right in Figure~\ref{fig:empty9}, with all the neighbouring non-triangular faces of $H$ being full.
\end{proposition}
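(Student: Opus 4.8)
The plan is to combine the facial constraints of Lemma~\ref{l:Hfaces} with the extremality hypothesis in three stages: first extract the basic local structure of an empty face and bound its length below, then use maximality to force the length to be exactly $9$, and finally read off the labelling and the status of the neighbouring faces.

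First I would record what an empty face looks like. By Lemma~\ref{l:Hfaces}\eqref{it:s9inn3} the lift of the octahedral triangle $(1,2,3)$ consists only of triangles, and by \eqref{it:s9tri} every $3$-cycle of $G'$ is facial; hence every vertex labelled $1,2,3$ lies on a triangular face covering $(1,2,3)$. Thus if a positive vertex lay in the interior of an empty face $F$, the triangle carrying it would lie inside $F$, making $F$ full, a contradiction. So $F$ contains no positive vertex in its interior; since all remaining vertices of $G'$ belong to $H$, the interior of $F$ carries no vertex of $G'$ at all, and $F$ is simultaneously a face of $H$ and of $G'$ with boundary in $H$. By Lemma~\ref{l:Hfaces}\eqref{it:mult3} this boundary reads $0,a_1,b_1,0,\dots,0,a_m,b_m$ with $a_i,b_i\in\{-1,-2,-3\}$, so the length is $3m$, and by \eqref{it:no6face} we have $m\neq 2$, whence $m\geq 3$. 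A second observation I would use repeatedly: each edge $(a_i,b_i)$ joins two negative vertices and so covers an octahedral edge, hence by Lemma~\ref{l:Hfaces}\eqref{it:s9inn3} lies on a triangular face covering $(-1,-2,-3)$; as the interior of $F$ is empty, this triangle sits on the far side of $(a_i,b_i)$, inside the neighbouring face.

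Second, and this is the step I expect to be the main obstacle, I would rule out $m\geq 4$ using maximality of the number of triangles. Since the boundary $0$-vertices are interior vertices of $G'$, their neighbourhoods are covered bijectively, so any modification must preserve each vertex's multiset of neighbour labels; the admissible moves are therefore label-preserving edge switches rather than free edge additions. The goal is to show that when $m\geq 4$ some such switch, applied inside the empty region $F$ and using only its boundary vertices $\{0,a_i,b_i\}$ together with the external $(-1,-2,-3)$-triangles guaranteed above, splits off a sub-face of length $9$ and creates at least one new triangular face, leaving the vertex set unchanged. The delicate point is to verify that after the switch the map is still a semi-projection, that no non-facial $3$-cycle and no hexagonal face (Lemma~\ref{l:Hfaces}\eqref{it:no6face}) are introduced, and that the label pattern of \eqref{it:mult3} survives; this forces a careful case analysis of how the beads in the two faces flanking $F$ attach along its boundary. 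If such a triangle-increasing move always exists for $m\geq 4$, maximality is contradicted and we may assume $m=3$.

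Finally, with $F$ a $9$-gon I would pin down its labelling. Writing the boundary as $0,a_1,b_1,0,a_2,b_2,0,a_3,b_3$, the $K_4$-adjacency constraints give $a_i\neq b_i$, and, since each boundary $0$-vertex meets exactly one neighbour of each negative label, also $b_i\neq a_{i+1}$ with indices read cyclically. These alone do not isolate the pattern, so I would add the emptiness constraint, the three external $(-1,-2,-3)$-triangles, and the requirement that the positive neighbours of the three $0$-vertices (which exist and lie outside $F$) be consistently placed in the flanking faces without creating a forbidden hexagon or a non-maximal configuration; a short finite elimination then leaves, up to permuting $\{-1,-2,-3\}$, the single pattern of Figure~\ref{fig:empty9}. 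For the neighbouring non-triangular faces, I would argue that each must receive at least one of these positive neighbours, and that by Lemma~\ref{l:Hfaces}\eqref{it:s9inn3},\eqref{it:s9tri} such a vertex drags its whole $(1,2,3)$-triangle into that face, making it full. The classification in this last stage is essentially a bounded check once the local picture around $F$ is fixed; the genuinely non-routine ingredient is the switching surgery of the second stage, whose validity against all clauses of Lemma~\ref{l:Hfaces} is where the real work lies.
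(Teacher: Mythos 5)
Your first stage and your closing remark about full neighbours do match the paper: an empty face $F'$ contains no vertex of $G'$ in its interior, hence is a face of $G'$ whose boundary reads $0_1,a_1,b_1,\dots,0_m,a_m,b_m$ as in Lemma~\ref{l:Hfaces}\eqref{it:mult3}, each edge $(a_i,b_i)$ carries a facial triangle covering $(-1,-2,-3)$ on its far side, and the positive neighbours of the $a_i,b_i$ force the faces across the $0$-edges to be full. The genuine gap is your second stage, and it is not merely an unverified step: the statement you propose to prove there cannot be proved. The only moves compatible with the semi-covering property and planarity are label-preserving swaps of two boundary edges $(0_i,a_i)$, $(0_j,a_j)$ (or two $b$-type edges) with equal labels, replaced by the two non-crossing chords $(0_i,a_j)$, $(0_j,a_i)$; such a swap creates a new $3$-cycle only when the two edges lie in consecutive periods, i.e.\ only when $a_i=a_{i-1}$ (or $b_i=b_{i-1}$). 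So maximality yields exactly the constraints $a_i\ne a_{i-1}$ and $b_i\ne b_{i-1}$ (the swap then cuts off the new triangular face $(a_{i-1},b_{i-1},0_i)$ and destroys none, since by Lemma~\ref{l:Hfaces}\eqref{it:s9tri} the deleted edges lie on no $3$-cycle); together with $a_i\ne b_i,b_{i-1}$ these force the boundary labelling to be the period-$9$ sequence $(0,-1,-2,0,-3,-1,0,-2,-3)$ repeated $k=m/3$ times, which is what kills $m=4,5$. For $k\ge 2$ no triangle can be gained at all: the paper instead applies a triangle-\emph{neutral} swap of two $(0,-1)$ edges nine steps apart, which merely splits $F'$ into a $9$-gonal and a $3(m-3)$-gonal empty face, leaving both the vertex count and the triangle count unchanged. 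Since this move and its inverse preserve both counts, maximality is indifferent between an $18$-gonal empty face and two separated $9$-gonal ones; that is precisely why the proposition says empty faces ``can be assumed'' to have length $9$ rather than that they must. Your plan to derive a contradiction from $m\ge 4$ is therefore aimed at a statement that the available mechanism cannot deliver.

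A second, related problem is the ordering of your stages. You defer the labelling analysis to Stage 3, after $m=3$ is fixed, but the constraints $a_i\ne a_{i-1}$, $b_i\ne b_{i-1}$ coming from the maximality swap are needed earlier: they rule out $m=4,5$ and set up the splitting for larger $m$. They are also indispensable within Stage 3 itself: for $m=3$ the conditions you list ($a_i\ne b_i$ and $b_i\ne a_{i+1}$) still admit the labelling $(0,-1,-2,0,-1,-2,0,-1,-2)$, and no inspection of the adjacent $(-1,-2,-3)$-triangles, beads, or placement of positive neighbours excludes it; only the triangle-creating swap applied to $a_1=a_2$ does. So the switching surgery you correctly identify as the crux is indeed the key ingredient, but it must be directed at repeated labels, not at long faces.
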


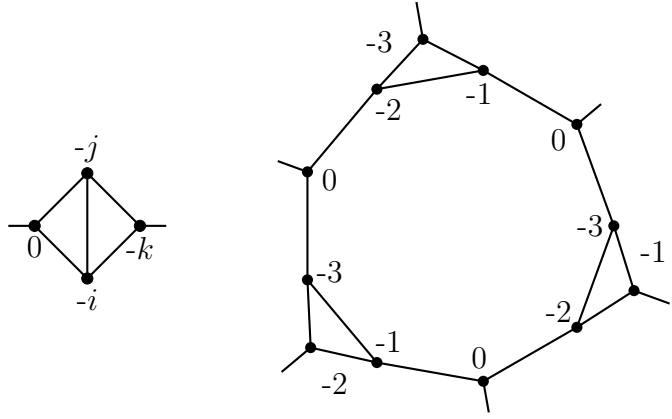
\begin{figure}[ht]
\centering
\begin{tikzpicture}[scale=0.7]
\def \r {3}
\def \b {2*pi/9}
\newarray\tarray
\readarray{tarray}{0&-1&-2&0&-3&-1&0&-2&-3}
\foreach \x in {1,2,...,9}{
\coordinate(A\x) at ({\r*cos((\x*\b) r)},{\r*sin((\x*\b) r)}); \fill (A\x) circle (3pt);
\node at ($0.85*(A\x)$) {\tarray(\x)};
}
\foreach \x in {1,2,...,9}{ \ifthenelse{\x = 9}{\pgfmathsetmacro{\y}{1}}{\pgfmathsetmacro{\y}{\x + 1}}; 
\draw[thick] (A\x) -- (A\y);
}
\foreach \z in {1,4,7}{\draw[thick] (A\z) -- ($1.2*(A\z)$);}
\foreach \z in {2,5,8}{\pgfmathsetmacro{\zo}{\z+1}; \ifthenelse{\z = 8}{\pgfmathtruncatemacro{\w}{2}}{\pgfmathtruncatemacro{\w}{\z + 3}};
\coordinate(B\z) at ({1.2*\r*cos(((\z+0.5)*\b) r)},{1.2*\r*sin(((\z+0.5)*\b) r)}); \fill (B\z) circle (3pt);
\draw[thick] (B\z) -- ($1.2*(B\z)$); \draw[thick] (A\z) -- (B\z) -- (A\zo);
\node at ($0.8*(A\zo)+0.4*(B\z)$) {\tarray(\w)};
}

\begin{scope}[shift={(-7, -4)}]
\draw[ thick ] (0, 5) -- (1,4) -- (0,3) -- (-1, 4) -- (0, 5) -- (0,3);
\draw[ thick ] (-1.5, 4)--(-1,4) (1,4) -- (1.5,4);
\filldraw[black] (-1, 4) circle (3pt) node[anchor=north]{$0$};
\filldraw[black] (0, 5) circle (3pt) node[anchor=south]{-$j$};
\filldraw[black] (1, 4) circle (3pt) node[anchor=north]{-$k$};
\filldraw[black] (0, 3) circle (3pt) node[anchor=north]{-$i$};
\end{scope}

\end{tikzpicture}
\caption{Left to right: a bead and an empty $9$-face of $H$.}
\label{fig:empty9}
\end{figure}

\begin{proof}
Consider a non-triangular empty face $F'$ of $H$. Note that $F'$ must be a face of $G'$ as well, and that no edge on its boundary $C'$ is an edge of some bead (as every bead forces both faces adjacent to it to be full). Suppose $0_1,a_1,b_1,0_2,a_2,b_2, \dots, 0_m,a_m,b_m$, where $a_i, b_i \in \{-1,-2,-3\}$, is the cyclic order of vertices of $C'$ (as per Lemma~\ref{l:Hfaces}\eqref{it:mult3}). Clearly $a_i \ne b_i, b_{i-1}$ for $i=1, \dots, m$ (where here and below we identify the subscripts $0, m+1$ with $m, 1$, respectively) and $m \ge 2$. Let $c_i$ be the third vertex of the triangle $(a_i,b_i,c_i)$, so that $\{a_i,b_i,c_i\}=\{-1,-2,-3\}$. Then $c_i$ lies outside of $F'$ and is not connected to any of the two vertices $0_i, 0_{i-1}$ on $C'$ (as that would create a bead). It follows that all the faces of $H$ which share an edge $(0_i,a_i)$ or $(b_{i-1},0_i)$ with $F'$ are full (and in particular, non-triangular). Now if for some $i=1, \dots, m$, we have $a_i=a_{i-1}$, we can replace the edges $(0_i,a_i)$ and $(0_{i-1},a_{i-1})$ of $C'$ by two chords $(0_i,a_{i-1})$ and $(0_{i-1},a_i)$ which creates an extra $3$-cycle in $G$ without destroying any, a contradiction. It follows that $a_i \ne a_{i-1}$, and by a similar argument, $b_i \ne b_{i-1}$. Then up to permutation, the only possible label sequence along $C'$ is the periodic sequence with period $9$ of the form $(0,-1,-2,0,-3,-1,0,-2,-3)$. Suppose we have $k \ge 2$ repeats of this period along $C'$. Then we can replace the first edges $(0,-1)$ of the first and the second period by two chords thus splitting $F'$ into smaller non-triangular empty faces.
\end{proof}

We make two further observations which will be used in this section.

\begin{remark} \label{rem:trianglesH}
Consider a long cycle $C \subset H$ that covers the $3$-cycle $(0,-i,-j)$ of $K_{1,2,2,2}$. Then by Lemma~\ref{l:Hfaces}\eqref{it:s9inn3}, any vertices labelled $i,j$ or $-k$ lying inside $C$ must be contained in a triangle labelled $(i,j,-k)$ (for $\{i,j,k\} = \{1,2,3\}$).
Moreover, note that for any edge $e$ in the cycle $C$, one of the two faces of $H$ adjacent to $e$ is inside $C$. Thus, for example, if $e$ lies on the boundary of $G'$, then this conclusion applies to the internal face of $H$ adjacent to $e$.
\end{remark}

\begin{remark} \label{rem:twoext} 
  Suppose a simple, oriented curve $\gamma$ with both endpoints in the external domain $F_e$ of $H$ passes through no vertices of $H$ and crosses its edges transversally. Let $i,j,k \in \{0,-1,-2,-3\}$ be pairwise distinct, and denote $a_{ij}$ the number of oriented edges $(i,j)$ of $H$ which $\gamma$ crosses with the positive orientation. Then $(a_{ij}-a_{ji})+(a_{jk}-a_{kj})+(a_{ki}-a_{ik})=0$. To see this, we take the union of $\gamma$ with a curve joining its endpoints and entirely lying in $F_e$ in such a way that the resulting closed curve $\gamma'$ is simple. Then it is easy to see that $a_{ij}-a_{ji}=p_i-p_j$, where $p_i$ is the number of vertices labelled $i$ lying in the interior of $\gamma'$.
  For example, if $\gamma$ crosses only two edges of $H$, then they must be both labelled $(i,j)$ for some $i \ne j, \; i,j \in \{0,-1,-2,-3\}$, and the two crossings must have opposite orientations.
\end{remark}





From Lemma~\ref{l:Hfaces}\eqref{it:no6face} we know that all the non-triangular internal faces of $H$ have length at least $9$ (and divisible by $3$). 
In the case of $9$-faces, Lemma~\ref{l:Hfaces}\eqref{it:above} and~\cite[Proposition~2]{ANP} respectively give us the following facts.

\begin{lemma} \label{l:above}
  Any full $9$-face of $H$ contains exactly one $3$-cycle $(1,2,3)$. No two internal $9$-faces of $H$ share a bead.
\end{lemma}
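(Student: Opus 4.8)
The lemma packages two independent facts, coming from two different sources, so the plan is to treat them one at a time.

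\emph{First assertion.} Here I would simply apply Lemma~\ref{l:Hfaces}\eqref{it:above}. A $9$-face is a $3m$-gonal face with $m=3$, so if it contains $t$ triangles labelled $(1,2,3)$ then $t<\tfrac23\cdot 3=2$, i.e.\ $t\le 1$. On the other hand, a \emph{full} internal face contains at least one $(1,2,3)$-triangle by definition (whereas triangular faces are empty), so $t\ge 1$. Hence $t=1$, which is exactly the claim. This half is pure arithmetic.

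\emph{Second assertion.} Here the plan is first to pin down what ``sharing a bead'' means and then to quote \cite[Proposition~2]{ANP}. Recall that a bead is the union of an apex triangle $(0,-i,-j)$ and the octahedral triangle $(-i,-j,-k)$ glued along the edge $(-i,-j)$; together they bound the $4$-cycle $(0,-i,-k,-j)$ whose only interior chord is $(-i,-j)$. Both of these $3$-cycles are faces of $H$ by Lemma~\ref{l:Hfaces}\eqref{it:s9tri}, and since the ``mid'' vertices $-i,-j$ of the $4$-cycle each have all three of their $H$-edges inside the bead, each of $-i,-j$ is incident to exactly one face of $H$ besides the two bead triangles. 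These are the two faces the bead is shared by, and both are full (hence non-triangular) by the remark used in the proof of Proposition~\ref{p:empty}. With this reading the statement is exactly \cite[Proposition~2]{ANP} specialised to $9$-faces, so I would invoke it directly.

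Should the cited proposition need reproving in the present notation, I would argue by contradiction from the maximality of the triangle count, in the same spirit as the proof of Proposition~\ref{p:empty}. Assuming the two faces $F_1,F_2$ abutting a bead are internal $9$-faces, each carries the boundary label pattern $0,a_1,b_1,0,a_2,b_2,0,a_3,b_3$ with $a_l,b_l\in\{-1,-2,-3\}$ (Lemma~\ref{l:Hfaces}\eqref{it:mult3}) and, by the first assertion, contains exactly one interior $(1,2,3)$-triangle, whose location is constrained by Remark~\ref{rem:trianglesH}. I would then track these labels around $F_1$ and $F_2$ simultaneously and exhibit a local edge swap (replacing two boundary edges by two chords, as in Proposition~\ref{p:empty}) that creates an extra $(1,2,3)$-triangle without destroying any, contradicting the assumption that $G'$ has the maximum number of triangles.

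\emph{Main obstacle.} The first assertion is immediate; all the content sits in the second. The hard part is the simultaneous bookkeeping of the two $9$-face boundaries around the shared bead: one must check that every label configuration compatible with the bead's fixed labels \emph{and} with both abutting faces being $9$-faces admits a triangle-increasing modification. The one point requiring genuine care before \cite[Proposition~2]{ANP} can be quoted verbatim is verifying that ``sharing a bead'' as used here coincides with the hypothesis of that proposition, i.e.\ that the two faces incident to the bead at $-i$ and at $-j$ are precisely the two $9$-faces in question.
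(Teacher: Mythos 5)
Your proposal is correct and takes essentially the same route as the paper: there, the first assertion is obtained from Lemma~\ref{l:Hfaces}\eqref{it:above} by exactly your arithmetic ($m=3$ gives $t<2$, while fullness of an internal face gives $t\ge 1$), and the second assertion is simply quoted from \cite[Proposition~2]{ANP}. Your fallback sketch of a maximality/edge-swap argument goes beyond what the paper provides, since the paper offers no independent proof of the second claim beyond that citation.
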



Theorem~\ref{th:notall9} is a direct consequence of Lemma~\ref{l:Hfaces} and the following theorem.

\begin{theorem3prime} \label{th:notall9'}
  Let $G'$ be a planar semi-cover of $K_{1,2,2,2}$ with the properties listed in Lemma~\ref{l:Hfaces}. Then the subgraph $H$ cannot have all its internal, non-triangular faces of length $9$.
\end{theorem3prime}

\begin{proof}
Suppose to the contrary that $H$ has all internal non-triangular faces of length 9. We can assume that $G'$ is chosen in such a way that $H$ is the cover of $K_4$ of the smallest fold (from among all such semi-covers).

By Lemma~\ref{l:above}, all the beads of $H$ (if there are any) lie on its external boundary, that is, each of them shares an edge with the external face, which we denote $F_e$.

\begin{lemma} \label{l:no2b}
  No $9$-face of $H$ may share more than one bead with the external face $F_e$.
\end{lemma}
\begin{proof}
  Suppose such a face $F'$ exists. Up to relabelling, we have the labelling of vertices as on the left in Figure~\ref{fig:2beads}, where $\{i,j\}=\{1,2\}$ (note that by Lemma~\ref{l:above}, the face $F'$ contains exactly one $3$-cycle $(1,2,3)$, so the labels of the internal vertices of the two beads cannot be the same). The internal vertices $-1$ and $-2$ of the two beads must be connected to the same vertex $3$ lying in $F'$, and then it is not difficult to see that the positions of all the edges and vertices lying $F'$ as in the middle in Figure~\ref{fig:2beads} are forced. Note that we still need edges $(2,0)$ and $(1,-3)$ in $F'$ whose choice is not unique. But in any case, both edges $(-j,i)$ and $(-j,3)$ do not lie in $F'$. We can now join the vertices $0'$ and $3'$ by an edge (creating a bead) and remove all the vertices and edges of $G'$ lying ``above" it, as on the right in Figure~\ref{fig:2beads}. We get a semi-cover with a smaller $H$.

\begin{figure}[h]
\centering
\begin{tikzpicture}[scale=0.7]
\def \r {4}
\def \rv {3}
\def \ger{\r/6}
\begin{scope}
  \coordinate (0c) at (0,\rv); \fill (0c) circle (3pt); \draw (0c) node[shift=({0,0.35})] {0};
  \coordinate (0l) at (-\r,\rv); \fill (0l) circle (3pt); \draw (0l) node[shift=({-0.25,-0.35})] {0};
  \coordinate (-1l) at (-\r/2,3*\rv/2); \fill (-1l) circle (3pt); \draw (-1l) node[right=.05em] {-$3$};
  \coordinate (-3cb) at (-\r/4,0); \fill (-3cb) circle (3pt); \draw (-3cb) node[shift=({0.3,0.3})] {-2};
  \coordinate (-1cb) at (0,-\rv/4); \fill (-1cb) circle (3pt); \draw (-1cb) node[shift=({0,-0.35})] {-$3$};
  \coordinate (-2cb) at (\r/4,0); \fill (-2cb) circle (3pt); \draw (-2cb) node[shift=({-0.3,0.3})] {-$1$};
  \coordinate (0lb) at (-\r/2,-\rv/2); \fill (0lb) circle (3pt); \draw (0lb) node[shift=({0.25,0.35})] {$0'$};
  \coordinate (-1lb) at (-\r,0); \fill (-1lb) circle (3pt); \draw (-1lb) node[shift=({-0.25,0.35})] {-$3'$};
  \coordinate (-3lb) at (-3*\r/4,-\rv/4); \fill (-3lb) circle (3pt); \draw (-3lb) node[shift=({0.25,0.35})] {-$j$};
  \coordinate (-2lb) at (-5*\r/4,-\rv/4); \fill (-2lb) circle (3pt); \draw (-2lb) node[shift=({0,-0.35})] {-$i$};
  \coordinate (-3lt) at (-3*\r/4,3*\rv/2); \fill (-3lt) circle (3pt); \draw (-3lt) node[left=.05em] {-2};
  \coordinate (-2lt) at (-\r/2,5*\rv/4); \fill (-2lt) circle (3pt); \draw (-2lt) node[shift=({0.25,-0.35})] {-1};
  \draw[thick] (0lb) -- (-3lb) -- (-1lb) -- (-2lb) -- (-3lb) (0lb) -- (-1cb) -- (-2cb) ;
  \draw[thick] (-3cb) -- (0c) -- (-1l)  (-2cb) -- (0c) (0l) -- (-1lb) (-1cb) -- (-3cb) -- (-2cb);
  \draw[thick] (-3lt) -- (-1l) -- (-2lt) -- (-3lt) -- (0l) -- (-2lt);
  \draw[thick] (0lb) -- ++(0,-\ger) (-2lb) -- ++(-\ger,-\ger);
  \draw (0,3*\rv/2) node {$F_e$}; 
  \draw (-\r/2,\rv/2) node {$F'$}; 
  \draw (0lb) node[shift=({-0.25,-0.4})] {-$i$};
\end{scope}
\begin{scope}[shift={(2.1*\r,0)}]
  \coordinate (0c) at (0,\rv); \fill (0c) circle (3pt); \draw (0c) node[shift=({0,0.35})] {0};
  \coordinate (0l) at (-\r,\rv); \fill (0l) circle (3pt); \draw (0l) node[shift=({-0.25,-0.35})] {0};
  \coordinate (-1l) at (-\r/2,3*\rv/2); \fill (-1l) circle (3pt); \draw (-1l) node[right=.05em] {-$3$};
  \coordinate (-3cb) at (-\r/4,0); \fill (-3cb) circle (3pt); \draw (-3cb) node[shift=({0.3,0.3})] {-2};
  \coordinate (-1cb) at (0,-\rv/4); \fill (-1cb) circle (3pt); \draw (-1cb) node[shift=({0,-0.35})] {-$3$};
  \coordinate (-2cb) at (\r/4,0); \fill (-2cb) circle (3pt); \draw (-2cb) node[shift=({-0.3,0.3})] {-$1$};
  \coordinate (0lb) at (-\r/2,-\rv/2); \fill (0lb) circle (3pt); \draw (0lb) node[shift=({0.25,0.35})] {$0'$};
  \coordinate (-1lb) at (-\r,0); \fill (-1lb) circle (3pt); \draw (-1lb) node[shift=({-0.25,0.35})] {-$3'$};
  \coordinate (-3lb) at (-3*\r/4,-\rv/4); \fill (-3lb) circle (3pt); \draw (-3lb) node[shift=({0.2,0.3})] {-$j$};
  \coordinate (-2lb) at (-5*\r/4,-\rv/4); \fill (-2lb) circle (3pt); \draw (-2lb) node[shift=({0,-0.35})] {-$i$};
  \coordinate (-3lt) at (-3*\r/4,3*\rv/2); \fill (-3lt) circle (3pt); \draw (-3lt) node[left=.05em] {-2};
  \coordinate (-2lt) at (-\r/2,5*\rv/4); \fill (-2lt) circle (3pt); \draw (-2lt) node[shift=({0.3,0})] {-1};
  \draw[thick] (0lb) -- (-3lb) -- (-1lb) -- (-2lb) -- (-3lb) (0lb) -- (-1cb) -- (-2cb) ;
  \draw[thick] (-3cb) -- (0c) -- (-1l)  (-2cb) -- (0c) (0l) -- (-1lb) (-1cb) -- (-3cb) -- (-2cb);
  \draw[thick] (-3lt) -- (-1l) -- (-2lt) -- (-3lt) -- (0l) -- (-2lt);
  \draw[thick] (0lb) -- ++(0,-\ger) (-2lb) -- ++(-\ger,-\ger);
  \draw (0,3*\rv/2) node {$F_e$}; 
  %
  \draw (0lb) node[shift=({-0.25,-0.4})] {-$i$};
  \coordinate (1in) at (-5*\r/8,3*\rv/8); \fill (1in) circle (3pt); \draw (1in) node[shift=({-0.2,-0.3})] {$1$};
  \coordinate (2in) at (-5*\r/8,5*\rv/8); \fill (2in) circle (3pt); \draw (2in) node[shift=({-0.2,0.3})] {$2$};
  \coordinate (3in) at (-3*\r/8,\rv/2); \fill (3in) circle (3pt); \draw (3in) node[shift=({0.25,-0.25})] {$3$};
  \draw[thick] (1in) -- (2in) -- (3in) -- (1in);
  \draw[thin] (0lb) -- (1in) -- (-3cb) -- (3in) -- (0c) (3in) -- (-2lt) -- (2in) -- (-1lb);
  \draw[thin] (-3lb) -- ++(\ger/4,-\ger) (-3lb) -- ++(-\ger/4,-\ger);
\end{scope}
\begin{scope}[shift={(4.2*\r,0)}]
  \coordinate (0lb) at (-\r/2,-\rv/2); \fill (0lb) circle (3pt); \draw (0lb) node[shift=({0.25,0.35})] {$0'$};
  \coordinate (-1lb) at (-\r,0); \fill (-1lb) circle (3pt); \draw (-1lb) node[shift=({-0.25,0.35})] {-$3'$};
  \coordinate (-3lb) at (-3*\r/4,-\rv/4); \fill (-3lb) circle (3pt); \draw (-3lb) node[shift=({0.25,0.2})] {-$j$};
  \coordinate (-2lb) at (-5*\r/4,-\rv/4); \fill (-2lb) circle (3pt); \draw (-2lb) node[shift=({0,-0.35})] {-$i$};
  \draw[thick] (0lb) -- (-3lb) -- (-1lb) -- (-2lb) -- (-3lb);
  \draw[thick] (0lb) -- ++(0,-\ger) (-2lb) -- ++(-\ger,-\ger);
  \draw (-\r/2,\r/4) node {$F_e$};
  \draw (0lb) node[shift=({-0.25,-0.4})] {-$i$};
  \draw[thin] (-3lb) -- ++(\ger/4,-\ger) (-3lb) -- ++(-\ger/4,-\ger);
  \draw[thick, dashed] (0lb) .. controls (-\r/2,0) .. (-1lb);
\end{scope}
\end{tikzpicture}
\caption{Removing a $9$-face $F'$ sharing two beads with the external face $F_e$.}
\label{fig:2beads}
\end{figure}
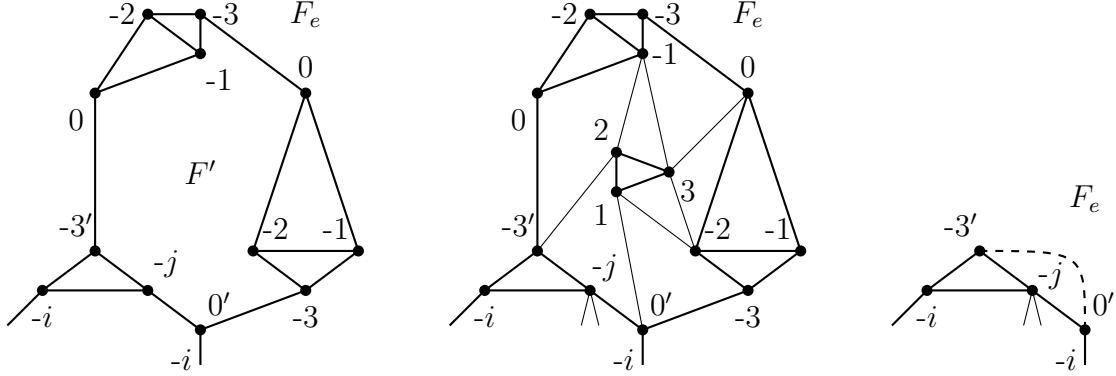
The resulting semi-cover still enjoys the properties from Lemma~\ref{l:Hfaces}. (It is easy to check that property~\eqref{it:s9inn3} is satisfied, in both cases, when $(i,j)=(1,2)$ and $(i,j)=(2,1)$.) We arrive at a contradiction with our choice of $G'$.
\end{proof}

Our starting point is the configuration described in the following lemma.

\begin{lemma} \label{l:F1F2}
  There exist two $9$-faces $F_1, F_2$ of $H$, each of which shares a bead with $F_e$, and a vertex $0 \in H$ which belongs to the boundary of all three faces $F_1, F_2, F_e$; see Figure~\ref{fig:F1F2}.
\end{lemma}
Without loss of generality, the labels of the three vertices adjacent to the vertex $0$ shared by $F_1, F_2$ and $F_e$ can be assigned as in Figure~\ref{fig:F1F2}, and then some other labels shown in Figure~\ref{fig:F1F2} are forced.
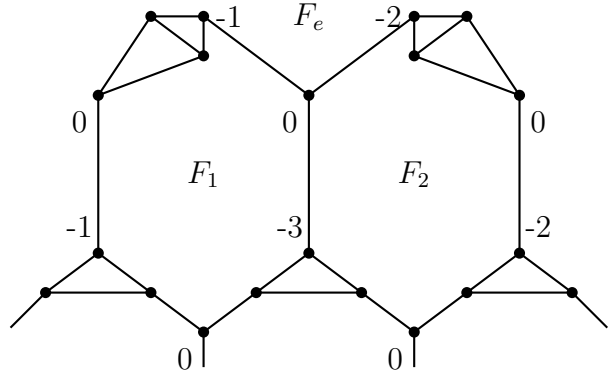
\begin{figure}[h]
\centering
\begin{tikzpicture}[scale=0.7]
\def \r {4}
\def \rv {3}
\def \ger{\r/6}
  \coordinate (0c) at (0,\rv); \fill (0c) circle (3pt); \draw (0c) node[shift=({-0.25,-0.35})] {0};
  \coordinate (0l) at (-\r,\rv); \fill (0l) circle (3pt); \draw (0l) node[shift=({-0.25,-0.35})] {0};
  \coordinate (0r) at (\r,\rv); \fill (0r) circle (3pt); \draw (0r) node[shift=({0.25,-0.35})] {0};
  \coordinate (-1l) at (-\r/2,3*\rv/2); \fill (-1l) circle (3pt); \draw (-1l) node[right=.06em] {-$1$};
  \coordinate (-2r) at (\r/2,3*\rv/2); \fill (-2r) circle (3pt); \draw (-2r) node[left=.06em] {-$2$};
  \coordinate (-3cb) at (0,0); \fill (-3cb) circle (3pt); \draw (-3cb) node[shift=({-0.25,0.35})] {-3};
  \coordinate (-1cb) at (-\r/4,-\rv/4); \fill (-1cb) circle (3pt); 
  \coordinate (-2cb) at (\r/4,-\rv/4); \fill (-2cb) circle (3pt); 
  \coordinate (0lb) at (-\r/2,-\rv/2); \fill (0lb) circle (3pt); \draw (0lb) node[shift=({-0.25,-0.35})] {0};
  \coordinate (-1lb) at (-\r,0); \fill (-1lb) circle (3pt); \draw (-1lb) node[shift=({-0.25,0.35})] {-$1$};
  \coordinate (-3lb) at (-3*\r/4,-\rv/4); \fill (-3lb) circle (3pt); 
  \coordinate (-2lb) at (-5*\r/4,-\rv/4); \fill (-2lb) circle (3pt); 
  \coordinate (0rb) at (\r/2,-\rv/2); \fill (0rb) circle (3pt); \draw (0rb) node[shift=({-0.25,-0.35})] {0};
  \coordinate (-2rb) at (\r,0); \fill (-2rb) circle (3pt); \draw (-2rb) node[shift=({0.25,0.35})] {-$2$};
  \coordinate (-1rb) at (3*\r/4,-\rv/4); \fill (-1rb) circle (3pt); 
  \coordinate (-3rb) at (5*\r/4,-\rv/4); \fill (-3rb) circle (3pt); 
  \coordinate (-3rt) at (3*\r/4,3*\rv/2); \fill (-3rt) circle (3pt); 
  \coordinate (-1rt) at (\r/2,5*\rv/4); \fill (-1rt) circle (3pt); 
  \coordinate (-3lt) at (-3*\r/4,3*\rv/2); \fill (-3lt) circle (3pt); 
  \coordinate (-2lt) at (-\r/2,5*\rv/4); \fill (-2lt) circle (3pt); 
  \draw[thick] (0rb) -- (-1rb) -- (-3rb) -- (-2rb) -- (-1rb) (0lb) -- (-3lb) -- (-1lb) -- (-2lb) -- (-3lb) (0lb) -- (-1cb) -- (-2cb) -- (0rb);
  \draw[thick] (-3cb) -- (0c) -- (-1l)  (0c) -- (-2r) (0l) -- (-1lb) (0r) -- (-2rb) (-1cb) -- (-3cb) -- (-2cb);
  \draw[thick] (-3rt) -- (-2r) -- (-1rt) -- (-3rt) -- (0r) -- (-1rt) (-3lt) -- (-1l) -- (-2lt) -- (-3lt) -- (0l) -- (-2lt);
  \draw[thick] (0lb) -- ++(0,-\ger) (0rb) -- ++(0,-\ger) (-2lb) -- ++(-\ger,-\ger) (-3rb) -- ++(\ger,-\ger);
  \draw (0,3*\rv/2) node {$F_e$}; 
  \draw (-\r/2,\rv/2) node {$F_1$}; \draw (\r/2,\rv/2) node {$F_2$};
\end{tikzpicture}
\caption{$9$-faces $F_1$ and $F_2$.}
\label{fig:F1F2}
\end{figure}

{
\begin{proof}
  Suppose $H$ is a $p$-fold cover of $K_4$ and that the external boundary $C_e$ of $H$ has length $3m, \; m > 1$. Let $\mathbf{f}_j, \, j \in \{3,9\}$, be the numbers of the internal $j$-faces of $H$, and let $\mathbf{b}$ be the number of the beads; note that $\mathbf{f}_3=\mathbf{b}+p$. From Euler's formula we find $\mathbf{f}_3+\mathbf{f}_9=2p+1$ and also $\mathbf{f}_3+3\mathbf{f}_9=4p-m$, from which $\mathbf{b}+\mathbf{f}_9=p+1$ and $\mathbf{b}+3\mathbf{f}_9=3p-m$. Hence $m=2l+1 \, (l \ge 1)$ is odd and $\mathbf{b}=l+2,\; \mathbf{f}_9=p-l-1$.

  We know that all the beads lie on the external boundary $C_e$ of $H$ (which is a cycle of length $3m=6l+3$). There are $m=2l+1$ edges on $C_e$ whose both endpoints are not $0$. As each bead contains exactly one such edge, $l+2$ of them belong to beads. It follows that there is a path $(0,i_1,i_2,0,i_3,i_4,0)$ on $C_e$ such that $i_j \in \{-1,-2,-3\}$ and each edge $(i_1,i_2)$ and $(i_3,i_4)$ lies on a bead. The vertex $0$ between them cannot belong to any of these beads because otherwise we get an internal face sharing two beads with the external boundary, in contradiction with Lemma~\ref{l:no2b}. Therefore we obtain the configuration as in Figure~\ref{fig:F1F2} (up to relabelling).
\end{proof}
}

The rest of the proof is ``local". We show that starting from the configuration in Figure~\ref{fig:F1F2}, the part of the semi-cover $G'$ satisfying the conditions of the proposition and lying in the $9$-faces $F_1$ and $F_2$ and attached to them is almost unique, and then this leads to a contradiction after attaching no more than five extra internal $9$-faces. We note that a $9$-face contains no more than one $3$-cycle $(1,2,3)$ in it; in all the cases below there is exactly one. Moreover, Property~\ref{l:Hfaces}\eqref{it:s9inn3} (and Remark~\ref{rem:trianglesH}) together with the fact that $G'$ is a semi-cover, forces most of the labels and the positions of the edges to be unique. In what follows we will sometimes omit the details of these leaving the (easy) verification to the reader.

\medskip

The face $F_1$ shares the edge(s) $(0,-1)$ with the external face, so by Remark~\ref{rem:trianglesH}, $F_1$ contains the $3$-cycles $(-3,1,2)$ and $(-2,1,3)$. Similarly, $F_2$ contains the $3$-cycles $(-3,1,2)$ and $(-1,2,3)$. It follows that the vertex $-3$ on the common boundary of $F_1$ and $F_2$ is the vertex of a $3$-cycle $(-3,1,2)$ entirely lying in either $F_1$ or $F_2$. Without loss of generality, we can assume that it lies in $F_2$. Then the vertex of the bead lying on the boundary of $F_2$ cannot be another $-3$, and so it is a $-1$. This gives us several forced choices for the vertices and edges of $G'$ lying in the closure of $F_2$ leading to the drawing on the left in Figure~\ref{fig:F1F2with}. Note that there is more than one choice for the two missing edges $(3,0)$ and $(1,-2)$; also, in $F_1$, the position of the $3$-cycle $(1,2,3)$ and its attachment to the bead is unique, but we do not know the labels yet.

We consider two cases depending on whether the label of the vertex of the bead lying on the boundary of $F_1$ is $-3$ or $-2$.

\medskip

First suppose that the vertex of the bead lying on the boundary of $F_1$ is labelled $-3$. Using the fact that $F_1$ contains the $3$-cycle $(-2,1,3)$, one sees that all the labels and the edges of $G'$ lying in the closure of $F_1$ are forced, as also are the ``missing" labels and edges in $F_2$. This gives the drawing on the right in Figure~\ref{fig:F1F2with}.

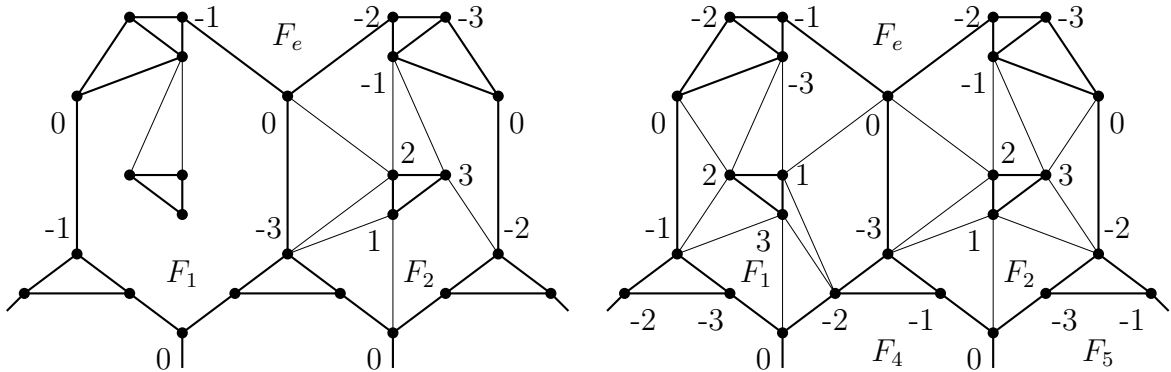
\begin{figure}[h]
\centering
\begin{tikzpicture}[scale=0.7]
\def \r {4}
\def \rv {3}
\def \ger{\r/6}
\begin{scope}
  \coordinate (0c) at (0,\rv); \fill (0c) circle (3pt); \draw (0c) node[shift=({-0.25,-0.35})] {0};
  \coordinate (0l) at (-\r,\rv); \fill (0l) circle (3pt); \draw (0l) node[shift=({-0.25,-0.35})] {0};
  \coordinate (0r) at (\r,\rv); \fill (0r) circle (3pt); \draw (0r) node[shift=({0.25,-0.35})] {0};
  \coordinate (-1l) at (-\r/2,3*\rv/2); \fill (-1l) circle (3pt); \draw (-1l) node[right=.05em] {-$1$};
  \coordinate (-2r) at (\r/2,3*\rv/2); \fill (-2r) circle (3pt); \draw (-2r) node[left=.06em] {-$2$};
  \coordinate (-3cb) at (0,0); \fill (-3cb) circle (3pt); \draw (-3cb) node[shift=({-0.25,0.35})] {-3};
  \coordinate (-1cb) at (-\r/4,-\rv/4); \fill (-1cb) circle (3pt); 
  \coordinate (-2cb) at (\r/4,-\rv/4); \fill (-2cb) circle (3pt); 
  \coordinate (0lb) at (-\r/2,-\rv/2); \fill (0lb) circle (3pt); \draw (0lb) node[shift=({-0.25,-0.35})] {0};
  \coordinate (-1lb) at (-\r,0); \fill (-1lb) circle (3pt); \draw (-1lb) node[shift=({-0.25,0.35})] {-$1$};
  \coordinate (-3lb) at (-3*\r/4,-\rv/4); \fill (-3lb) circle (3pt); 
  \coordinate (-2lb) at (-5*\r/4,-\rv/4); \fill (-2lb) circle (3pt); 
  \coordinate (0rb) at (\r/2,-\rv/2); \fill (0rb) circle (3pt); \draw (0rb) node[shift=({-0.25,-0.35})] {0};
  \coordinate (-2rb) at (\r,0); \fill (-2rb) circle (3pt); \draw (-2rb) node[shift=({0.25,0.35})] {-$2$};
  \coordinate (-1rb) at (3*\r/4,-\rv/4); \fill (-1rb) circle (3pt); 
  \coordinate (-3rb) at (5*\r/4,-\rv/4); \fill (-3rb) circle (3pt); 
  \coordinate (-3rt) at (3*\r/4,3*\rv/2); \fill (-3rt) circle (3pt); \draw (-3rt) node[right=.05em] {-$3$};
  \coordinate (-1rt) at (\r/2,5*\rv/4); \fill (-1rt) circle (3pt); \draw (-1rt) node[shift=({-0.25,-0.35})] {-$1$};
  \coordinate (-3lt) at (-3*\r/4,3*\rv/2); \fill (-3lt) circle (3pt); 
  \coordinate (-2lt) at (-\r/2,5*\rv/4); \fill (-2lt) circle (3pt); 
  \draw[thick] (0rb) -- (-1rb) -- (-3rb) -- (-2rb) -- (-1rb) (0lb) -- (-3lb) -- (-1lb) -- (-2lb) -- (-3lb) (0lb) -- (-1cb) -- (-2cb) -- (0rb);
  \draw[thick] (-3cb) -- (0c) -- (-1l)  (0c) -- (-2r) (0l) -- (-1lb) (0r) -- (-2rb) (-1cb) -- (-3cb) -- (-2cb);
  \draw[thick] (-3rt) -- (-2r) -- (-1rt) -- (-3rt) -- (0r) -- (-1rt) (-3lt) -- (-1l) -- (-2lt) -- (-3lt) -- (0l) -- (-2lt);
  \draw[thick] (0lb) -- ++(0,-\ger) (0rb) -- ++(0,-\ger) (-2lb) -- ++(-\ger/2,-\ger/2) (-3rb) -- ++(\ger/2,-\ger/2);
  \draw (0,11*\rv/8) node {$F_e$}; 
  \draw (-\r/2,-\rv/8) node {$F_1$}; \draw (5*\r/8,-\rv/8) node {$F_2$};
  \coordinate (3r) at (3*\r/4,\rv/2); \fill (3r) circle (3pt); \draw (3r) node[right=.05em] {$3$};
  \coordinate (1r) at (\r/2,\rv/4); \fill (1r) circle (3pt); \draw (1r) node[shift=({-0.25,-0.35})] {$1$};
  \coordinate (2r) at (\r/2,\rv/2); \fill (2r) circle (3pt); \draw (2r) node[shift=({0.2,0.3})] {$2$};
  \draw[thick] (3r) -- (2r) -- (1r) -- (3r);
  \draw[ultra thin] (-2rb) -- (3r) -- (-1rt) -- (2r) -- (-3cb) -- (1r) -- (0rb) (2r) -- (0c);
  \coordinate (1l) at (-\r/2,\rv/2); \fill (1l) circle (3pt); 
  \coordinate (3l) at (-3*\r/4,\rv/2); \fill (3l) circle (3pt); 
  \coordinate (2l) at (-\r/2,\rv/4); \fill (2l) circle (3pt); 
  \draw[thick] (3l) -- (2l) -- (1l) -- (3l);
  \draw[ultra thin] (1l) -- (-2lt) -- (3l);
\end{scope}
\begin{scope}[shift={(2.85*\r,0)}]
  \coordinate (0c) at (0,\rv); \fill (0c) circle (3pt); \draw (0c) node[shift=({-0.2,-0.38})] {0};
  \coordinate (0l) at (-\r,\rv); \fill (0l) circle (3pt); \draw (0l) node[shift=({-0.25,-0.35})] {0};
  \coordinate (0r) at (\r,\rv); \fill (0r) circle (3pt); \draw (0r) node[shift=({0.25,-0.35})] {0};
  \coordinate (-1l) at (-\r/2,3*\rv/2); \fill (-1l) circle (3pt); \draw (-1l) node[right=.05em] {-$1$};
  \coordinate (-2r) at (\r/2,3*\rv/2); \fill (-2r) circle (3pt); \draw (-2r) node[left=.06em] {-$2$};
  \coordinate (-3cb) at (0,0); \fill (-3cb) circle (3pt); \draw (-3cb) node[shift=({-0.25,0.35})] {-3};
  \coordinate (-1cb) at (-\r/4,-\rv/4); \fill (-1cb) circle (3pt); \draw (-1cb) node[shift=({0,-0.35})] {-$2$};
  \coordinate (-2cb) at (\r/4,-\rv/4); \fill (-2cb) circle (3pt); \draw (-2cb) node[shift=({-0.25,-0.35})] {-$1$};
  \coordinate (0lb) at (-\r/2,-\rv/2); \fill (0lb) circle (3pt); \draw (0lb) node[shift=({-0.25,-0.35})] {0};
  \coordinate (-1lb) at (-\r,0); \fill (-1lb) circle (3pt); \draw (-1lb) node[shift=({-0.25,0.35})] {-$1$};
  \coordinate (-3lb) at (-3*\r/4,-\rv/4); \fill (-3lb) circle (3pt); \draw (-3lb) node[shift=({-0.25,-0.35})] {-$3$};
  \coordinate (-2lb) at (-5*\r/4,-\rv/4); \fill (-2lb) circle (3pt); \draw (-2lb) node[shift=({0.25,-0.35})] {-$2$};
  \coordinate (0rb) at (\r/2,-\rv/2); \fill (0rb) circle (3pt); \draw (0rb) node[shift=({-0.25,-0.35})] {0};
  \coordinate (-2rb) at (\r,0); \fill (-2rb) circle (3pt); \draw (-2rb) node[shift=({0.25,0.35})] {-$2$};
  \coordinate (-1rb) at (3*\r/4,-\rv/4); \fill (-1rb) circle (3pt); \draw (-1rb) node[shift=({0.25,-0.35})] {-$3$};
  \coordinate (-3rb) at (5*\r/4,-\rv/4); \fill (-3rb) circle (3pt); \draw (-3rb) node[shift=({-0.25,-0.35})] {-$1$};
  \coordinate (-3rt) at (3*\r/4,3*\rv/2); \fill (-3rt) circle (3pt); \draw (-3rt) node[right=.05em] {-$3$};
  \coordinate (-1rt) at (\r/2,5*\rv/4); \fill (-1rt) circle (3pt); \draw (-1rt) node[shift=({-0.25,-0.35})] {-$1$};
  \coordinate (-3lt) at (-3*\r/4,3*\rv/2); \fill (-3lt) circle (3pt); \draw (-3lt) node[left=.05em] {-$2$};
  \coordinate (-2lt) at (-\r/2,5*\rv/4); \fill (-2lt) circle (3pt); \draw (-2lt) node[shift=({0.25,-0.35})] {-$3$};
  \draw[thick] (0rb) -- (-1rb) -- (-3rb) -- (-2rb) -- (-1rb) (0lb) -- (-3lb) -- (-1lb) -- (-2lb) -- (-3lb) (0lb) -- (-1cb) -- (-2cb) -- (0rb);
  \draw[thick] (-3cb) -- (0c) -- (-1l)  (0c) -- (-2r) (0l) -- (-1lb) (0r) -- (-2rb) (-1cb) -- (-3cb) -- (-2cb);
  \draw[thick] (-3rt) -- (-2r) -- (-1rt) -- (-3rt) -- (0r) -- (-1rt) (-3lt) -- (-1l) -- (-2lt) -- (-3lt) -- (0l) -- (-2lt);
  \draw[thick] (0lb) -- ++(0,-\ger) (0rb) -- ++(0,-\ger) (-2lb) -- ++(-\ger/2,-\ger/2) (-3rb) -- ++(\ger/2,-\ger/2);
  \draw (0,11*\rv/8) node {$F_e$}; 
  \draw (-5*\r/8,-\rv/8) node {$F_1$}; \draw (5*\r/8,-\rv/8) node {$F_2$};
  \draw (0,-5*\rv/8) node {$F_4$}; \draw (\r,-5*\rv/8) node {$F_5$};
  \coordinate (3r) at (3*\r/4,\rv/2); \fill (3r) circle (3pt); \draw (3r) node[right=.05em] {$3$};
  \coordinate (1r) at (\r/2,\rv/4); \fill (1r) circle (3pt); \draw (1r) node[shift=({-0.25,-0.35})] {$1$};
  \coordinate (2r) at (\r/2,\rv/2); \fill (2r) circle (3pt); \draw (2r) node[shift=({0.2,0.3})] {$2$};
  \draw[thick] (3r) -- (2r) -- (1r) -- (3r);
  \draw[ultra thin] (-2rb) -- (3r) -- (-1rt) -- (2r) -- (-3cb) -- (1r) -- (0rb) (2r) -- (0c);
  \coordinate (1l) at (-\r/2,\rv/2); \fill (1l) circle (3pt); \draw (1l) node[right=.05em] {$1$};
  \coordinate (3l) at (-3*\r/4,\rv/2); \fill (3l) circle (3pt); \draw (3l) node[left=.05em] {$2$};
  \coordinate (2l) at (-\r/2,\rv/4); \fill (2l) circle (3pt); \draw (2l) node[shift=({-0.25,-0.35})] {$3$};
  \draw[thick] (3l) -- (2l) -- (1l) -- (3l);
  \draw[ultra thin] (1l) -- (-2lt) -- (3l) (0c) -- (1l) -- (-1cb) -- (2l) -- (0lb) (2l) -- (-1lb) -- (3l) -- (0l) (1r) -- (-2rb) (3r) -- (0r);
\end{scope}
\end{tikzpicture}
\caption{$9$-faces $F_1$ and $F_2$ with parts of the semi-cover $G'$ inside.}
\label{fig:F1F2with}
\end{figure}

Consider the face $F_5$ of $H$ sharing the edge $(0,-3)$ with $F_2$ to the bottom-right of the right drawing in Figure~\ref{fig:F1F2with}. We cannot have $F_5=F_e$ since $F_5$ and $F_e$ share a single edge of $H$ (the edge $(0,-1)$; see Remark~\ref{rem:twoext}), and $F_5$ cannot be triangular. So $F_5$ is an internal $9$-face. As it shares an edge $(0,-1)$ with $F_e$,  Remark~\ref{rem:trianglesH} implies that it contains $3$-cycles $(-3,1,2)$ and $(-2,1,3)$. Now consider the face $F_4$ of $H$ sharing the edge $(0,-2)$ with $F_1$ and the edge $(0,-1)$ with $F_2$ (at the ``bottom-centre" on the right in Figure~\ref{fig:F1F2with}. We cannot have $F_4=F_e$ since $F_4$ and $F_e$ can be connected with an arc which only crosses the edges $(0,-1)$ and $(0,-2)$ of $H$ (see Remark~\ref{rem:twoext}), and $F_4$ cannot be a triangular (as it has at least two vertices $0$ on its boundary). So $F_4$ is an internal $9$-face. Moreover, as we can easily see from Figure~\ref{fig:F1F2with} (on the right), $F_4$ lies in the domains bounded by cycles with the vertices $(0,-1,-3)$ and $(0,-2,-3)$ of $H$ (for there is an arc from $F_4$ to $F_e$ which crosses only one edge in each of these cycles). Considering the cycles of $H$ with the vertices $(0,-1,-2)$, there is an arc from $F_4$ to $F_e$ (essentially the same one as before) which crosses two edges of this cycle(s), $(0,-2)$ and $(-1,0)$. But because the crossings have opposite orientations, we deduce that $F_4$ also lies in the domains bounded by a cycle with the vertices $(0,-1,-2)$. So by property~\ref{l:Hfaces}\eqref{it:s9inn3}, the face $F_4$ contains all the $3$-cycles $(-3,1,2), (-1,2,3)$ and $(-2,1,3)$ (one consequence of this is the fact that the vertices of the $3$-cycle $(1,2,3)$ lying in $F_4$ must be connected to three different vertices $0$).

\begin{figure}[h]
\centering
\begin{tikzpicture}[scale=0.7]
\def \r {4}
\def \rv {3}
\def \ger{\r/6}
\begin{scope}
  \coordinate (0c) at (0,\rv); \fill (0c) circle (3pt); \draw (0c) node[shift=({-0.25,-0.35})] {0};
  \coordinate (0l) at (-\r,\rv); \fill (0l) circle (3pt); \draw (0l) node[shift=({-0.25,-0.35})] {0};
  \coordinate (0r) at (\r,\rv); \fill (0r) circle (3pt); \draw (0r) node[shift=({0.25,-0.35})] {0};
  \coordinate (-1l) at (-\r/2,3*\rv/2); \fill (-1l) circle (3pt); \draw (-1l) node[right=.05em] {-$3$};
  \coordinate (-2r) at (\r/2,3*\rv/2); \fill (-2r) circle (3pt); \draw (-2r) node[left=.06em] {-$2$};
  \coordinate (-3cb) at (0,0); \fill (-3cb) circle (3pt); \draw (-3cb) node[shift=({-0.25,0.35})] {-2};
  \coordinate (-1cb) at (-\r/4,-\rv/4); \fill (-1cb) circle (3pt); 
  \coordinate (-2cb) at (\r/4,-\rv/4); \fill (-2cb) circle (3pt); 
  \coordinate (0lb) at (-\r/2,-\rv/2); \fill (0lb) circle (3pt); \draw (0lb) node[shift=({-0.25,-0.25})] {0};
  \coordinate (-1lb) at (-\r,0); \fill (-1lb) circle (3pt); \draw (-1lb) node[shift=({-0.25,0.35})] {-$1$};
  \coordinate (-3lb) at (-3*\r/4,-\rv/4); \fill (-3lb) circle (3pt); 
  \coordinate (-2lb) at (-5*\r/4,-\rv/4); \fill (-2lb) circle (3pt); 
  \coordinate (0rb) at (\r/2,-\rv/2); \fill (0rb) circle (3pt); \draw (0rb) node[shift=({-0.25,-0.25})] {0};
  \coordinate (-2rb) at (\r,0); \fill (-2rb) circle (3pt); 
  \coordinate (-1rb) at (3*\r/4,-\rv/4); \fill (-1rb) circle (3pt); 
  \coordinate (-3rb) at (5*\r/4,-\rv/4); \fill (-3rb) circle (3pt); 
  \coordinate (-3rt) at (\r/4,5*\rv/4); \fill (-3rt) circle (3pt); \draw (-3rt) node[shift=({-0.2,-0.35})] {-$3$};
  \coordinate (-1rt) at (3*\r/4,5*\rv/4); \fill (-1rt) circle (3pt); \draw (-1rt) node[shift=({0.25,0.25})] {-$1$};
  \coordinate (-3lt) at (-\r/4,5*\rv/4); \fill (-3lt) circle (3pt); \draw (-3lt) node[shift=({0.25,0.25})] {-$1$};
  \coordinate (-2lt) at (-3*\r/4,5*\rv/4); \fill (-2lt) circle (3pt); \draw (-2lt) node[shift=({0,-0.35})] {-$2$};
  \draw[thick] (0rb) -- (-1rb) -- (-3rb) -- (-2rb) -- (-1rb) (0lb) -- (-3lb) -- (-1lb) -- (-2lb) -- (-3lb) (0lb) -- (-1cb) -- (-2cb) -- (0rb);
  \draw[thick] (-3cb) -- (0c) -- (-1l)  (0c) -- (-2r) (0l) -- (-1lb) (0r) -- (-2rb) (-1cb) -- (-3cb) -- (-2cb);
  \draw[thick] (-3rt) -- (-2r) -- (-1rt) -- (-3rt)  (0r) -- (-1rt) (-3lt) -- (-1l) -- (-2lt) -- (-3lt) (0l) -- (-2lt);
  \draw[thick] (0lb) -- ++(0,-2*\ger/3) (0rb) -- ++(0,-2*\ger/3) (-2lb) -- ++(-\ger/2,-\ger/2) (-3rb) -- ++(\ger/2,-\ger/2);
  \draw[thick] (-1l) -- ++(0,\ger/2) (-2r) -- ++(0,\ger/2) (0l) -- ++(-\ger,\ger) (0r) -- ++(\ger,\ger);
  \draw (\r,3*\rv/2) node {$F_e$}; 
  \draw (-\r/2,-\rv/8) node {$F_4$}; \draw (\r/2,-\rv/8) node {$F_5$};
  \draw (\r/8,3*\rv/2) node {$F_2$}; \draw (-\r,3*\rv/2) node {$F_1$};
  \coordinate (3F1) at ($(-2lt)+(-\ger/4,2*\ger/3)$); \draw (3F1) node[shift=({0,0.2})] {$3$};
  \coordinate (1F1) at ($(-2lt)+(\ger/4,2*\ger/3)$); \draw (1F1) node[shift=({0,0.2})] {$1$};
  \draw[ultra thin] (3F1) -- (-2lt) -- (1F1);
  \coordinate (30F1) at ($(0l)+(0,2*\ger/3)$); \draw (30F1) node[shift=({0,0.2})] {$3$};
  \coordinate (1F2) at ($(0c)+(0,2*\ger/3)$); \draw (1F2) node[shift=({0,0.2})] {$1$};
  \draw[ultra thin] (30F1) -- (0l) (0c)-- (1F2);
  \coordinate (3r) at (3*\r/4,\rv/2); \fill (3r) circle (3pt); 
  \coordinate (1r) at (\r/2,\rv/4); \fill (1r) circle (3pt); \draw (1r) node[shift=({-0.25,-0.3})] {$3$};
  \coordinate (2r) at (\r/2,\rv/2); \fill (2r) circle (3pt); 
  \draw[thick] (3r) -- (2r) -- (1r) -- (3r);
  \draw[ultra thin] (3r) -- (-3rt) -- (2r);
  \coordinate (1l) at (-\r/2,\rv/2); \fill (1l) circle (3pt); 
  \coordinate (3l) at (-3*\r/4,\rv/2); \fill (3l) circle (3pt); 
  \coordinate (2l) at (-\r/2,\rv/4); \fill (2l) circle (3pt); \draw (2l) node[shift=({-0.25,-0.3})] {$1$};
  \draw[thick] (3l) -- (2l) -- (1l) -- (3l);
  \draw[ultra thin] (1l) -- (-3lt) -- (3l);
\end{scope}
\begin{scope}[shift={(2.85*\r,0)}]
  \coordinate (0c) at (0,\rv); \fill (0c) circle (3pt); \draw (0c) node[shift=({-0.25,-0.35})] {0};
  \coordinate (0l) at (-\r,\rv); \fill (0l) circle (3pt); \draw (0l) node[shift=({-0.25,-0.35})] {0};
  \coordinate (0r) at (\r,\rv); \fill (0r) circle (3pt); \draw (0r) node[shift=({0.25,-0.35})] {0};
  \coordinate (-1l) at (-\r/2,3*\rv/2); \fill (-1l) circle (3pt); \draw (-1l) node[right=.05em] {-$3$};
  \coordinate (-2r) at (\r/2,3*\rv/2); \fill (-2r) circle (3pt); \draw (-2r) node[left=.06em] {-$2$};
  \coordinate (-3cb) at (0,0); \fill (-3cb) circle (3pt); \draw (-3cb) node[shift=({0.25,0.35})] {-2};
  \coordinate (-1cb) at (-\r/4,-\rv/4); \fill (-1cb) circle (3pt); \draw (-1cb) node[shift=({0,-0.35})] {-$1$};
  \coordinate (-2cb) at (\r/4,-\rv/4); \fill (-2cb) circle (3pt); \draw (-2cb) node[shift=({-0.25,-0.35})] {-$3$};
  \coordinate (0lb) at (-\r/2,-\rv/2); \fill (0lb) circle (3pt); \draw (0lb) node[shift=({-0.25,-0.25})] {0};
  \coordinate (-1lb) at (-\r,0); \fill (-1lb) circle (3pt); \draw (-1lb) node[shift=({-0.25,0.35})] {-$1$};
  \coordinate (-3lb) at (-3*\r/4,-\rv/4); \fill (-3lb) circle (3pt); \draw (-3lb) node[shift=({-0.25,-0.35})] {-$3$};
  \coordinate (-2lb) at (-5*\r/4,-\rv/4); \fill (-2lb) circle (3pt); 
  \coordinate (0rb) at (\r/2,-\rv/2); \fill (0rb) circle (3pt); \draw (0rb) node[shift=({-0.25,-0.25})] {0};
  \coordinate (-2rb) at (\r,0); \fill (-2rb) circle (3pt); 
  \coordinate (-1rb) at (3*\r/4,-\rv/4); \fill (-1rb) circle (3pt); 
  \coordinate (-3rb) at (5*\r/4,-\rv/4); \fill (-3rb) circle (3pt); 
  \coordinate (-3rt) at (\r/4,5*\rv/4); \fill (-3rt) circle (3pt); \draw (-3rt) node[shift=({-0.2,-0.35})] {-$3$};
  \coordinate (-1rt) at (3*\r/4,5*\rv/4); \fill (-1rt) circle (3pt); \draw (-1rt) node[shift=({0.25,0.25})] {-$1$};
  \coordinate (-3lt) at (-\r/4,5*\rv/4); \fill (-3lt) circle (3pt); \draw (-3lt) node[shift=({0.25,0.25})] {-$1$};
  \coordinate (-2lt) at (-3*\r/4,5*\rv/4); \fill (-2lt) circle (3pt); \draw (-2lt) node[shift=({0,-0.35})] {-$2$};
  \draw[thick] (0rb) -- (-1rb) -- (-3rb) -- (-2rb) -- (-1rb) (0lb) -- (-3lb) -- (-1lb) -- (-2lb) -- (-3lb) (0lb) -- (-1cb) -- (-2cb) -- (0rb);
  \draw[thick] (-3cb) -- (0c) -- (-1l)  (0c) -- (-2r) (0l) -- (-1lb) (0r) -- (-2rb) (-1cb) -- (-3cb) -- (-2cb);
  \draw[thick] (-3rt) -- (-2r) -- (-1rt) -- (-3rt)  (0r) -- (-1rt) (-3lt) -- (-1l) -- (-2lt) -- (-3lt) (0l) -- (-2lt);
  \draw[thick] (0lb) -- ++(0,-2*\ger/3) (0rb) -- ++(0,-2*\ger/3) (-2lb) -- ++(-\ger/2,-\ger/2) (-3rb) -- ++(\ger/2,-\ger/2);
  \draw[thick] (-1l) -- ++(0,\ger/2) (-2r) -- ++(0,\ger/2) (0l) -- ++(-\ger,\ger) (0r) -- ++(\ger,\ger);
  \draw (\r,3*\rv/2) node {$F_e$}; 
  \draw (\r/2,-\rv/8) node {$F_5$};
  \draw (\r/8,3*\rv/2) node {$F_2$}; \draw (-\r,3*\rv/2) node {$F_1$};
  \coordinate (3F1) at ($(-2lt)+(-\ger/4,2*\ger/3)$); \draw (3F1) node[shift=({0,0.2})] {$3$};
  \coordinate (1F1) at ($(-2lt)+(\ger/4,2*\ger/3)$); \draw (1F1) node[shift=({0,0.2})] {$1$};
  \draw[ultra thin] (3F1) -- (-2lt) -- (1F1);
  \coordinate (30F1) at ($(0l)+(0,2*\ger/3)$); \draw (30F1) node[shift=({0,0.2})] {$3$};
  \coordinate (1F2) at ($(0c)+(0,2*\ger/3)$); \draw (1F2) node[shift=({0,0.2})] {$1$};
  \draw[ultra thin] (30F1) -- (0l) (0c)-- (1F2);
  \coordinate (3r) at (3*\r/4,\rv/2); \fill (3r) circle (3pt); 
  \coordinate (1r) at (\r/2,\rv/4); \fill (1r) circle (3pt); \draw (1r) node[shift=({-0.25,-0.3})] {$3$};
  \coordinate (2r) at (\r/2,\rv/2); \fill (2r) circle (3pt); 
  \draw[thick] (3r) -- (2r) -- (1r) -- (3r);
  \draw[ultra thin] (3r) -- (-3rt) -- (2r);
  \coordinate (1l) at (-\r/2,\rv/2); \fill (1l) circle (3pt); \draw (1l) node[right=.06em] {$3$};
  \coordinate (3l) at (-3*\r/4,\rv/2); \fill (3l) circle (3pt); \draw (3l) node[left=.05em] {$2$};
  \coordinate (2l) at (-\r/2,\rv/4); \fill (2l) circle (3pt); \draw (2l) node[shift=({0.25,-0.3})] {$1$};
  \draw[thick] (3l) -- (2l) -- (1l) -- (3l);
  \draw[ultra thin] (1l) -- (-3lt) -- (3l) (0l) -- (3l) -- (-3lb) -- (2l) -- (0lb) (2l) -- (-3cb) -- (1l) -- (0c);
\end{scope}
\end{tikzpicture}
\caption{$9$-faces $F_4$ and $F_5$.}
\label{fig:F4F5}
\end{figure}
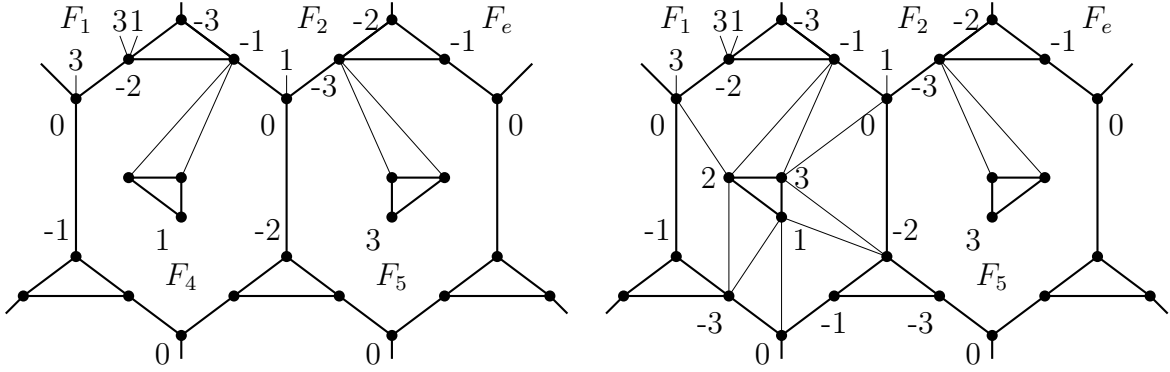

On the left in Figure~\ref{fig:F4F5}, we have the drawing of the faces $F_4$ and $F_5$, together with some labels and edges which are forced from the drawings of $F_1$ and $F_2$ (on the right in Figure~\ref{fig:F1F2with}). Now, in $F_4$, the edge $(1,0)$ is forced, and then the fact that the vertices  $1,2,3$ are connected to three distinct vertices $0$ and that $F_4$ contains the $3$-cycles $(-3,1,2), (-1,2,3)$ and $(-2,1,3)$ easily forces all the other edges and labels on the boundary, as on the right in Figure~\ref{fig:F4F5}. But now in $F_5$, the edge $(3,-1)$ is forced, and as $F_5$ contains the $3$-cycle $(-2,1,3)$, the vertices $1,2$ and $-2$ and the edges $(3,-2)$ and $(1,-2)$ are also forced. But then the edges $(3,0)$ and $(2,-1)$ have to cross. This contradiction completes the proof of the proposition in the first case.

\medskip

We now consider the second case assuming that the vertex of the bead lying on the boundary of $F_1$ is labelled $-2$ (in the drawing on the left in Figure~\ref{fig:F1F2with}). Then the vertex $2$ lying in $F_1$ is forced, as also is the edge $(2,-3)$. As $F_1$ contains a $3$-cycle $(-3,1,2)$, the edge $(1,-3)$ and the vertices $1$ and $3$ are also forced, as also are the edges $(1,0)$ and $(2,-1)$. We arrive at the drawing shown on the left in Figure~\ref{fig:F1F2with2}. Note that there is more than one possible choice of the edges $(3,-1)$ lying in $F_1$ and $(3,0),(1,-2)$ lying in $F_2$, and also of some labels.

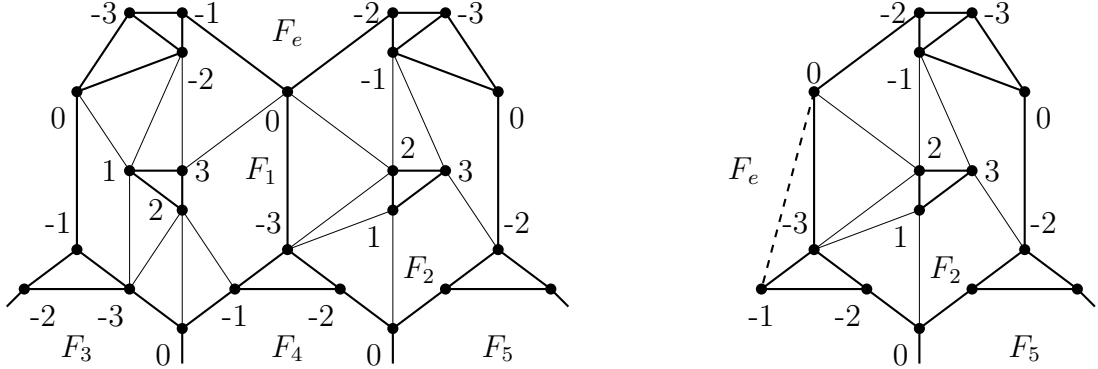
\begin{figure}[h]
\centering
\begin{tikzpicture}[scale=0.7]
\def \r {4}
\def \rv {3}
\def \ger{\r/6}
\begin{scope}
  \coordinate (0c) at (0,\rv); \fill (0c) circle (3pt); \draw (0c) node[shift=({-0.2,-0.38})] {0};
  \coordinate (0l) at (-\r,\rv); \fill (0l) circle (3pt); \draw (0l) node[shift=({-0.25,-0.35})] {0};
  \coordinate (0r) at (\r,\rv); \fill (0r) circle (3pt); \draw (0r) node[shift=({0.25,-0.35})] {0};
  \coordinate (-1l) at (-\r/2,3*\rv/2); \fill (-1l) circle (3pt); \draw (-1l) node[right=.05em] {-$1$};
  \coordinate (-2r) at (\r/2,3*\rv/2); \fill (-2r) circle (3pt); \draw (-2r) node[left=.07em] {-$2$};
  \coordinate (-3cb) at (0,0); \fill (-3cb) circle (3pt); \draw (-3cb) node[shift=({-0.25,0.35})] {-3};
  \coordinate (-1cb) at (-\r/4,-\rv/4); \fill (-1cb) circle (3pt); \draw (-1cb) node[shift=({0,-0.35})] {-$1$};
  \coordinate (-2cb) at (\r/4,-\rv/4); \fill (-2cb) circle (3pt); \draw (-2cb) node[shift=({-0.25,-0.35})] {-$2$};
  \coordinate (0lb) at (-\r/2,-\rv/2); \fill (0lb) circle (3pt); \draw (0lb) node[shift=({-0.25,-0.35})] {0};
  \coordinate (-1lb) at (-\r,0); \fill (-1lb) circle (3pt); \draw (-1lb) node[shift=({-0.25,0.35})] {-$1$};
  \coordinate (-3lb) at (-3*\r/4,-\rv/4); \fill (-3lb) circle (3pt); \draw (-3lb) node[shift=({-0.25,-0.35})] {-$3$};
  \coordinate (-2lb) at (-5*\r/4,-\rv/4); \fill (-2lb) circle (3pt); \draw (-2lb) node[shift=({0.25,-0.35})] {-$2$};
  \coordinate (0rb) at (\r/2,-\rv/2); \fill (0rb) circle (3pt); \draw (0rb) node[shift=({-0.25,-0.35})] {0};
  \coordinate (-2rb) at (\r,0); \fill (-2rb) circle (3pt); \draw (-2rb) node[shift=({0.25,0.35})] {-$2$};
  \coordinate (-1rb) at (3*\r/4,-\rv/4); \fill (-1rb) circle (3pt); 
  \coordinate (-3rb) at (5*\r/4,-\rv/4); \fill (-3rb) circle (3pt); 
  \coordinate (-3rt) at (3*\r/4,3*\rv/2); \fill (-3rt) circle (3pt); \draw (-3rt) node[right=.05em] {-$3$};
  \coordinate (-1rt) at (\r/2,5*\rv/4); \fill (-1rt) circle (3pt); \draw (-1rt) node[shift=({-0.25,-0.35})] {-$1$};
  \coordinate (-3lt) at (-3*\r/4,3*\rv/2); \fill (-3lt) circle (3pt); \draw (-3lt) node[left=.05em] {-$3$};
  \coordinate (-2lt) at (-\r/2,5*\rv/4); \fill (-2lt) circle (3pt); \draw (-2lt) node[shift=({0.25,-0.35})] {-$2$};
  \draw[thick] (0rb) -- (-1rb) -- (-3rb) -- (-2rb) -- (-1rb) (0lb) -- (-3lb) -- (-1lb) -- (-2lb) -- (-3lb) (0lb) -- (-1cb) -- (-2cb) -- (0rb);
  \draw[thick] (-3cb) -- (0c) -- (-1l)  (0c) -- (-2r) (0l) -- (-1lb) (0r) -- (-2rb) (-1cb) -- (-3cb) -- (-2cb);
  \draw[thick] (-3rt) -- (-2r) -- (-1rt) -- (-3rt) -- (0r) -- (-1rt) (-3lt) -- (-1l) -- (-2lt) -- (-3lt) -- (0l) -- (-2lt);
  \draw[thick] (0lb) -- ++(0,-\ger) (0rb) -- ++(0,-\ger) (-2lb) -- ++(-\ger/2,-\ger/2) (-3rb) -- ++(\ger/2,-\ger/2);
  \draw (0,11*\rv/8) node {$F_e$}; 
  \draw (-\r/8,\rv/2) node {$F_1$}; \draw (5*\r/8,-\rv/8) node {$F_2$};
  \draw (-\r,-5*\rv/8) node {$F_3$}; \draw (0,-5*\rv/8) node {$F_4$}; \draw (\r,-5*\rv/8) node {$F_5$};
  \coordinate (3r) at (3*\r/4,\rv/2); \fill (3r) circle (3pt); \draw (3r) node[right=.05em] {$3$};
  \coordinate (1r) at (\r/2,\rv/4); \fill (1r) circle (3pt); \draw (1r) node[shift=({-0.25,-0.35})] {$1$};
  \coordinate (2r) at (\r/2,\rv/2); \fill (2r) circle (3pt); \draw (2r) node[shift=({0.2,0.3})] {$2$};
  \draw[thick] (3r) -- (2r) -- (1r) -- (3r);
  \draw[ultra thin] (-2rb) -- (3r) -- (-1rt) -- (2r) -- (-3cb) -- (1r) -- (0rb) (2r) -- (0c); 
  \coordinate (1l) at (-\r/2,\rv/2); \fill (1l) circle (3pt); \draw (1l) node[right=.06em] {$3$};
  \coordinate (3l) at (-3*\r/4,\rv/2); \fill (3l) circle (3pt); \draw (3l) node[left=.05em] {$1$};
  \coordinate (2l) at (-\r/2,\rv/4); \fill (2l) circle (3pt); \draw (2l) node[shift=({-0.35,0})] {$2$};
  \draw[thick] (3l) -- (2l) -- (1l) -- (3l);
  \draw[ultra thin] (1l) -- (-2lt) -- (3l) (0c) -- (1l)  (-1cb) -- (2l) -- (0lb) (2l) -- (-3lb) -- (3l) -- (0l);
\end{scope}
\begin{scope}[shift={(2.5*\r,0)}]
  \coordinate (0c) at (0,\rv); \fill (0c) circle (3pt); \draw (0c) node[shift=({0,0.25})] {0};
  \coordinate (0r) at (\r,\rv); \fill (0r) circle (3pt); \draw (0r) node[shift=({0.25,-0.35})] {0};
  \coordinate (-2r) at (\r/2,3*\rv/2); \fill (-2r) circle (3pt); \draw (-2r) node[left=.07em] {-$2$};
  \coordinate (-3cb) at (0,0); \fill (-3cb) circle (3pt); \draw (-3cb) node[shift=({-0.25,0.35})] {-3};
  \coordinate (-1cb) at (-\r/4,-\rv/4); \fill (-1cb) circle (3pt); \draw (-1cb) node[shift=({0,-0.35})] {-$1$};
  \coordinate (-2cb) at (\r/4,-\rv/4); \fill (-2cb) circle (3pt); \draw (-2cb) node[shift=({-0.25,-0.35})] {-$2$};
  \coordinate (0rb) at (\r/2,-\rv/2); \fill (0rb) circle (3pt); \draw (0rb) node[shift=({-0.25,-0.35})] {0};
  \coordinate (-2rb) at (\r,0); \fill (-2rb) circle (3pt); \draw (-2rb) node[shift=({0.25,0.35})] {-$2$};
  \coordinate (-1rb) at (3*\r/4,-\rv/4); \fill (-1rb) circle (3pt); 
  \coordinate (-3rb) at (5*\r/4,-\rv/4); \fill (-3rb) circle (3pt); 
  \coordinate (-3rt) at (3*\r/4,3*\rv/2); \fill (-3rt) circle (3pt); \draw (-3rt) node[right=.05em] {-$3$};
  \coordinate (-1rt) at (\r/2,5*\rv/4); \fill (-1rt) circle (3pt); \draw (-1rt) node[shift=({-0.25,-0.35})] {-$1$};
  \draw[thick] (0rb) -- (-1rb) -- (-3rb) -- (-2rb) -- (-1rb) (-1cb) -- (-2cb) -- (0rb);
  \draw[thick] (-3cb) -- (0c)  (0c) -- (-2r) (0r) -- (-2rb) (-1cb) -- (-3cb) -- (-2cb);
  \draw[thick] (-3rt) -- (-2r) -- (-1rt) -- (-3rt) -- (0r) -- (-1rt);
  \draw[thick] (0rb) -- ++(0,-\ger) (-3rb) -- ++(\ger/2,-\ger/2);
  \draw (-\r/3,\rv/2) node {$F_e$}; 
  \draw (5*\r/8,-\rv/8) node {$F_2$};
  \draw (\r,-5*\rv/8) node {$F_5$};
  \coordinate (3r) at (3*\r/4,\rv/2); \fill (3r) circle (3pt); \draw (3r) node[right=.05em] {$3$};
  \coordinate (1r) at (\r/2,\rv/4); \fill (1r) circle (3pt); \draw (1r) node[shift=({-0.25,-0.35})] {$1$};
  \coordinate (2r) at (\r/2,\rv/2); \fill (2r) circle (3pt); \draw (2r) node[shift=({0.2,0.3})] {$2$};
  \draw[thick] (3r) -- (2r) -- (1r) -- (3r);
  \draw[ultra thin] (-2rb) -- (3r) -- (-1rt) -- (2r) -- (-3cb) -- (1r) -- (0rb) (2r) -- (0c); 
  \draw[thick, dashed] (0c) -- (-1cb);
\end{scope}
\end{tikzpicture}
\caption{$9$-faces $F_1$ and $F_2$ with parts of the semi-cover $G'$ inside.}
\label{fig:F1F2with2}
\end{figure}

We now consider the faces $F_3, F_4$ and $F_5$ of $H$ as shown on the left in Figure~\ref{fig:F1F2with2}. They must be pairwise different, and neither of $F_3$ and $F_5$ may coincide with $F_e$ (as each of them shares a single edge with $F_e$). Assuming that $F_4$ coincides with $F_e$, we can join the vertices $0$ and $-1$ of $F_1$ and remove all the faces of $H$ to the left of the resulting edges (including $F_1$ and $F_3$) which results in a smaller semi-cover contradicting the choice of $G'$ (or Lemma~\ref{l:no2b}), shown on the right in Figure~\ref{fig:F1F2with2}. Moreover, neither of the faces $F_3$ and $F_5$ can be triangular (contradiction with Lemma~\ref{l:no2b}), and also $F_4$ cannot be triangular, as it has at least two vertices $0$. We conclude that all three faces $F_3,F_4$ and $F_5$ are internal $9$-faces of $H$.

Figure~\ref{fig:F3F4F5} shows the faces $F_3,F_4$ and $F_5$, together with some forced labels and edges, and the segments of the edges belonging to $F_1$ and $F_2$ whose endpoints lie on the boundaries of $F_3$ and $F_4$ (note that the choice of some of these edges is not unique; the labels ``$1?$" and ``$3?$" indicate that the corresponding edges may go either to $F_2$ or $F_1$ respectively, or to $F_4$).

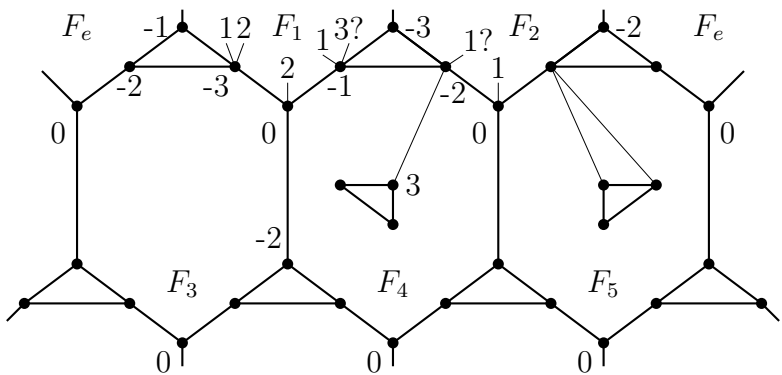
\begin{figure}[h]
\centering
\begin{tikzpicture}[scale=0.7]
\def \r {4}
\def \rv {3}
\def \ger{\r/6}
  \coordinate (0c) at (0,\rv); \fill (0c) circle (3pt); \draw (0c) node[shift=({-0.25,-0.35})] {0};
  \coordinate (0l) at (-\r,\rv); \fill (0l) circle (3pt); \draw (0l) node[shift=({-0.25,-0.35})] {0};
  \coordinate (0r) at (\r,\rv); \fill (0r) circle (3pt); \draw (0r) node[shift=({0.25,-0.35})] {0};
  \coordinate (-1l) at (-\r/2,3*\rv/2); \fill (-1l) circle (3pt); \draw (-1l) node[right=.05em] {-$3$};
  \coordinate (-2r) at (\r/2,3*\rv/2); \fill (-2r) circle (3pt); \draw (-2r) node[right=.05em] {-$2$};
  \coordinate (-3cb) at (0,0); \fill (-3cb) circle (3pt); 
  \coordinate (-1cb) at (-\r/4,-\rv/4); \fill (-1cb) circle (3pt); 
  \coordinate (-2cb) at (\r/4,-\rv/4); \fill (-2cb) circle (3pt); 
  \coordinate (0lb) at (-\r/2,-\rv/2); \fill (0lb) circle (3pt); \draw (0lb) node[shift=({-0.25,-0.25})] {0};
  \coordinate (-1lb) at (-\r,0); \fill (-1lb) circle (3pt); \draw (-1lb) node[shift=({-0.25,0.35})] {-$2$};
  \coordinate (-3lb) at (-3*\r/4,-\rv/4); \fill (-3lb) circle (3pt); 
  \coordinate (-2lb) at (-5*\r/4,-\rv/4); \fill (-2lb) circle (3pt); 
  \coordinate (0rb) at (\r/2,-\rv/2); \fill (0rb) circle (3pt); \draw (0rb) node[shift=({-0.25,-0.25})] {0};
  \coordinate (-2rb) at (\r,0); \fill (-2rb) circle (3pt); 
  \coordinate (-1rb) at (3*\r/4,-\rv/4); \fill (-1rb) circle (3pt); 
  \coordinate (-3rb) at (5*\r/4,-\rv/4); \fill (-3rb) circle (3pt); 
  \coordinate (-3rt) at (\r/4,5*\rv/4); \fill (-3rt) circle (3pt); 
  \coordinate (-1rt) at (3*\r/4,5*\rv/4); \fill (-1rt) circle (3pt); 
  \coordinate (-3lt) at (-\r/4,5*\rv/4); \fill (-3lt) circle (3pt); \draw (-3lt) node[shift=({0.1,-0.35})] {-$2$};
  \coordinate (-2lt) at (-3*\r/4,5*\rv/4); \fill (-2lt) circle (3pt); \draw (-2lt) node[shift=({0,-0.25})] {-$1$};
  \coordinate (0ll) at (-2*\r,\rv); \fill (0ll) circle (3pt); \draw (0ll) node[shift=({-0.25,-0.35})] {0};
  \coordinate (-1ll) at (-3*\r/2,3*\rv/2); \fill (-1ll) circle (3pt); \draw (-1ll) node[left=.06em] {-$1$};
  \coordinate (0llb) at (-3*\r/2,-\rv/2); \fill (0llb) circle (3pt); \draw (0llb) node[shift=({-0.25,-0.25})] {0};
  \coordinate (-1llb) at (-2*\r,0); \fill (-1llb) circle (3pt); 
  \coordinate (-3llb) at (-7*\r/4,-\rv/4); \fill (-3llb) circle (3pt); 
  \coordinate (-2llb) at (-9*\r/4,-\rv/4); \fill (-2llb) circle (3pt); 
  \coordinate (-3llt) at (-5*\r/4,5*\rv/4); \fill (-3llt) circle (3pt); \draw (-3llt) node[shift=({-0.25,-0.25})] {-$3$};
  \coordinate (-2llt) at (-7*\r/4,5*\rv/4); \fill (-2llt) circle (3pt); \draw (-2llt) node[shift=({0,-0.25})] {-$2$};
  \draw[thick] (0rb) -- (-1rb) -- (-3rb) -- (-2rb) -- (-1rb) (0lb) -- (-3lb) -- (-1lb) -- (-2lb) -- (-3lb) (0lb) -- (-1cb) -- (-2cb) -- (0rb);
  \draw[thick] (-3cb) -- (0c) -- (-1l)  (0c) -- (-2r) (0l) -- (-1lb) (0r) -- (-2rb) (-1cb) -- (-3cb) -- (-2cb);
  \draw[thick] (-3rt) -- (-2r) -- (-1rt) -- (-3rt)  (0r) -- (-1rt) (-3lt) -- (-1l) -- (-2lt) -- (-3lt) (0l) -- (-2lt);
  \draw[thick]  (-2lb) -- (0llb) -- (-3llb) -- (-2llb) -- (-1llb) -- (0ll) -- (-2llt) -- (-1ll) -- (-3llt) -- (0l) (-2llt) -- (-3llt) (-1llb) -- (-3llb);
  \draw[thick] (0lb) -- ++(0,-2*\ger/3) (0llb) -- ++(0,-2*\ger/3) (0rb) -- ++(0,-2*\ger/3) (-2llb) -- ++(-\ger/2,-\ger/2) (-3rb) -- ++(\ger/2,-\ger/2);
  \draw[thick] (-1l) -- ++(0,\ger/2) (-1ll) -- ++(0,\ger/2) (-2r) -- ++(0,\ger/2) (0ll) -- ++(-\ger,\ger) (0r) -- ++(\ger,\ger);
  \draw (\r,3*\rv/2) node {$F_e$}; \draw (-2*\r,3*\rv/2) node {$F_e$};
  \draw (-3*\r/2,-\rv/8) node {$F_3$}; \draw (-\r/2,-\rv/8) node {$F_4$}; \draw (\r/2,-\rv/8) node {$F_5$};
  \draw (\r/8,3*\rv/2) node {$F_2$}; \draw (-\r,3*\rv/2) node {$F_1$};
  \coordinate (3F1) at ($(-2lt)+(\ger/4,2*\ger/3)$); \draw (3F1) node[shift=({0,0.2})] {$3?$};
  \coordinate (1F1) at ($(-2lt)+(-2*\ger/4,\ger/3)$); \draw (1F1) node[shift=({0,0.2})] {$1$};
  \draw[ultra thin] (3F1) -- (-2lt) -- (1F1);
  \coordinate (20F1) at ($(0l)+(0,2*\ger/3)$); \draw (20F1) node[shift=({0,0.2})] {$2$};
  \coordinate (1F2) at ($(0c)+(0,2*\ger/3)$); \draw (1F2) node[shift=({0,0.2})] {$1$};
  \draw[ultra thin] (20F1) -- (0l) (0c)-- (1F2);
  \coordinate (1F31) at ($(-3llt)+(-\ger/4,2*\ger/3)$); \draw (1F31) node[shift=({0,0.2})] {$1$};
  \coordinate (2F31) at ($(-3llt)+(\ger/4,2*\ger/3)$); \draw (2F31) node[shift=({0,0.2})] {$2$};
  \draw[ultra thin] (1F31) -- (-3llt) -- (2F31);
  \coordinate (1F12) at ($(-3lt)+(\ger/2,\ger/3)$); \draw (1F12) node[shift=({0.2,0.2})] {$1?$};
  \draw[ultra thin] (1F12) -- (-3lt);
  \coordinate (3r) at (3*\r/4,\rv/2); \fill (3r) circle (3pt); 
  \coordinate (1r) at (\r/2,\rv/4); \fill (1r) circle (3pt); 
  \coordinate (2r) at (\r/2,\rv/2); \fill (2r) circle (3pt); 
  \draw[thick] (3r) -- (2r) -- (1r) -- (3r);
  \draw[ultra thin] (3r) -- (-3rt) -- (2r);
  \coordinate (1l) at (-\r/2,\rv/2); \fill (1l) circle (3pt); \draw (1l) node[right=.05em] {$3$};
  \coordinate (3l) at (-3*\r/4,\rv/2); \fill (3l) circle (3pt); 
  \coordinate (2l) at (-\r/2,\rv/4); \fill (2l) circle (3pt); 
  \draw[thick] (3l) -- (2l) -- (1l) -- (3l);
  \draw[ultra thin] (1l) -- (-3lt);
\end{tikzpicture}
\caption{$9$-faces $F_3, F_4$ and $F_5$.}
\label{fig:F3F4F5}
\end{figure}

The face $F_3$ shares the single edge $(0,-2)$ with $F_e$, so by Remark~\ref{rem:trianglesH}, it contains $3$-cycles $(-1,2,3)$ and $(-3,1,2)$. Moreover, the vertex $-2$ on the common boundary of $F_3$ and $F_4$ is connected to the vertex $3$ lying in $F_3$ (we already have the edge $(-2,3)$ in $F_4$). It is not hard to verify that we obtain the drawing as in Figure~\ref{fig:F3inF4F5}, with more than one choice for the edge $(1,0)$ lying in $F_3$ and a choice of which of the two unlabelled vertices has label $-3$.

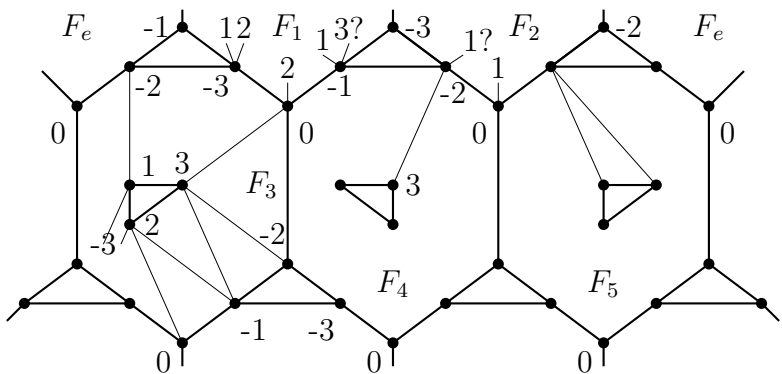
\begin{figure}[h]
\centering
\begin{tikzpicture}[scale=0.7]
\def \r {4}
\def \rv {3}
\def \ger{\r/6}
  \coordinate (0c) at (0,\rv); \fill (0c) circle (3pt); \draw (0c) node[shift=({-0.25,-0.35})] {0};
  \coordinate (0l) at (-\r,\rv); \fill (0l) circle (3pt); \draw (0l) node[shift=({0.25,-0.35})] {0};
  \coordinate (0r) at (\r,\rv); \fill (0r) circle (3pt); \draw (0r) node[shift=({0.25,-0.35})] {0};
  \coordinate (-1l) at (-\r/2,3*\rv/2); \fill (-1l) circle (3pt); \draw (-1l) node[right=.05em] {-$3$};
  \coordinate (-2r) at (\r/2,3*\rv/2); \fill (-2r) circle (3pt); \draw (-2r) node[right=.05em] {-$2$};
  \coordinate (-3cb) at (0,0); \fill (-3cb) circle (3pt); 
  \coordinate (-1cb) at (-\r/4,-\rv/4); \fill (-1cb) circle (3pt); 
  \coordinate (-2cb) at (\r/4,-\rv/4); \fill (-2cb) circle (3pt); 
  \coordinate (0lb) at (-\r/2,-\rv/2); \fill (0lb) circle (3pt); \draw (0lb) node[shift=({-0.25,-0.25})] {0};
  \coordinate (-1lb) at (-\r,0); \fill (-1lb) circle (3pt); \draw (-1lb) node[shift=({-0.2,0.4})] {-$2$};
  \coordinate (-3lb) at (-3*\r/4,-\rv/4); \fill (-3lb) circle (3pt); \draw (-3lb) node[shift=({-0.25,-0.35})] {-$3$};
  \coordinate (-2lb) at (-5*\r/4,-\rv/4); \fill (-2lb) circle (3pt); \draw (-2lb) node[shift=({0.25,-0.35})] {-$1$};
  \coordinate (0rb) at (\r/2,-\rv/2); \fill (0rb) circle (3pt); \draw (0rb) node[shift=({-0.25,-0.25})] {0};
  \coordinate (-2rb) at (\r,0); \fill (-2rb) circle (3pt); 
  \coordinate (-1rb) at (3*\r/4,-\rv/4); \fill (-1rb) circle (3pt); 
  \coordinate (-3rb) at (5*\r/4,-\rv/4); \fill (-3rb) circle (3pt); 
  \coordinate (-3rt) at (\r/4,5*\rv/4); \fill (-3rt) circle (3pt); 
  \coordinate (-1rt) at (3*\r/4,5*\rv/4); \fill (-1rt) circle (3pt); 
  \coordinate (-3lt) at (-\r/4,5*\rv/4); \fill (-3lt) circle (3pt); \draw (-3lt) node[shift=({0.1,-0.35})] {-$2$};
  \coordinate (-2lt) at (-3*\r/4,5*\rv/4); \fill (-2lt) circle (3pt); \draw (-2lt) node[shift=({0,-0.25})] {-$1$};
  \coordinate (0ll) at (-2*\r,\rv); \fill (0ll) circle (3pt); \draw (0ll) node[shift=({-0.25,-0.35})] {0};
  \coordinate (-1ll) at (-3*\r/2,3*\rv/2); \fill (-1ll) circle (3pt); \draw (-1ll) node[left=.06em] {-$1$};
  \coordinate (0llb) at (-3*\r/2,-\rv/2); \fill (0llb) circle (3pt); \draw (0llb) node[shift=({-0.25,-0.25})] {0};
  \coordinate (-1llb) at (-2*\r,0); \fill (-1llb) circle (3pt); 
  \coordinate (-3llb) at (-7*\r/4,-\rv/4); \fill (-3llb) circle (3pt); 
  \coordinate (-2llb) at (-9*\r/4,-\rv/4); \fill (-2llb) circle (3pt); 
  \coordinate (-3llt) at (-5*\r/4,5*\rv/4); \fill (-3llt) circle (3pt); \draw (-3llt) node[shift=({-0.25,-0.25})] {-$3$};
  \coordinate (-2llt) at (-7*\r/4,5*\rv/4); \fill (-2llt) circle (3pt); \draw (-2llt) node[shift=({0.25,-0.25})] {-$2$};
  \draw[thick] (0rb) -- (-1rb) -- (-3rb) -- (-2rb) -- (-1rb) (0lb) -- (-3lb) -- (-1lb) -- (-2lb) -- (-3lb) (0lb) -- (-1cb) -- (-2cb) -- (0rb);
  \draw[thick] (-3cb) -- (0c) -- (-1l)  (0c) -- (-2r) (0l) -- (-1lb) (0r) -- (-2rb) (-1cb) -- (-3cb) -- (-2cb);
  \draw[thick] (-3rt) -- (-2r) -- (-1rt) -- (-3rt)  (0r) -- (-1rt) (-3lt) -- (-1l) -- (-2lt) -- (-3lt) (0l) -- (-2lt);
  \draw[thick]  (-2lb) -- (0llb) -- (-3llb) -- (-2llb) -- (-1llb) -- (0ll) -- (-2llt) -- (-1ll) -- (-3llt) -- (0l) (-2llt) -- (-3llt) (-1llb) -- (-3llb);
  \draw[thick] (0lb) -- ++(0,-2*\ger/3) (0llb) -- ++(0,-2*\ger/3) (0rb) -- ++(0,-2*\ger/3) (-2llb) -- ++(-\ger/2,-\ger/2) (-3rb) -- ++(\ger/2,-\ger/2);
  \draw[thick] (-1l) -- ++(0,\ger/2) (-1ll) -- ++(0,\ger/2) (-2r) -- ++(0,\ger/2) (0ll) -- ++(-\ger,\ger) (0r) -- ++(\ger,\ger);
  \draw (\r,3*\rv/2) node {$F_e$}; \draw (-2*\r,3*\rv/2) node {$F_e$};
  \draw (-9*\r/8,\rv/2) node {$F_3$}; \draw (-\r/2,-\rv/8) node {$F_4$}; \draw (\r/2,-\rv/8) node {$F_5$};
  \draw (\r/8,3*\rv/2) node {$F_2$}; \draw (-\r,3*\rv/2) node {$F_1$};
  \coordinate (3F1) at ($(-2lt)+(\ger/4,2*\ger/3)$); \draw (3F1) node[shift=({0,0.2})] {$3?$};
  \coordinate (1F1) at ($(-2lt)+(-2*\ger/4,\ger/3)$); \draw (1F1) node[shift=({0,0.2})] {$1$};
  \draw[ultra thin] (3F1) -- (-2lt) -- (1F1);
  \coordinate (20F1) at ($(0l)+(0,2*\ger/3)$); \draw (20F1) node[shift=({0,0.2})] {$2$};
  \coordinate (1F2) at ($(0c)+(0,2*\ger/3)$); \draw (1F2) node[shift=({0,0.2})] {$1$};
  \draw[ultra thin] (20F1) -- (0l) (0c)-- (1F2);
  \coordinate (1F31) at ($(-3llt)+(-\ger/4,2*\ger/3)$); \draw (1F31) node[shift=({0,0.2})] {$1$};
  \coordinate (2F31) at ($(-3llt)+(\ger/4,2*\ger/3)$); \draw (2F31) node[shift=({0,0.2})] {$2$};
  \draw[ultra thin] (1F31) -- (-3llt) -- (2F31);
  \coordinate (1F12) at ($(-3lt)+(\ger/2,\ger/3)$); \draw (1F12) node[shift=({0.2,0.2})] {$1?$};
  \draw[ultra thin] (1F12) -- (-3lt);
  \coordinate (3r) at (3*\r/4,\rv/2); \fill (3r) circle (3pt); 
  \coordinate (1r) at (\r/2,\rv/4); \fill (1r) circle (3pt); 
  \coordinate (2r) at (\r/2,\rv/2); \fill (2r) circle (3pt); 
  \draw[thick] (3r) -- (2r) -- (1r) -- (3r);
  \draw[ultra thin] (3r) -- (-3rt) -- (2r);
  \coordinate (1l) at (-\r/2,\rv/2); \fill (1l) circle (3pt); \draw (1l) node[right=.05em] {$3$};
  \coordinate (3l) at (-3*\r/4,\rv/2); \fill (3l) circle (3pt); 
  \coordinate (2l) at (-\r/2,\rv/4); \fill (2l) circle (3pt); 
  \draw[thick] (3l) -- (2l) -- (1l) -- (3l);
  \draw[ultra thin] (1l) -- (-3lt);
  \coordinate (1ll) at (-7*\r/4,\rv/2); \fill (1ll) circle (3pt); \draw (1ll) node[shift=({0.25,0.25})] {$1$};
  \coordinate (3ll) at (-3*\r/2,\rv/2); \fill (3ll) circle (3pt); \draw (3ll) node[above=.05em] {$3$};
  \coordinate (2ll) at (-7*\r/4,\rv/4); \fill (2ll) circle (3pt); \draw (2ll) node[shift=({0.3,0})] {$2$};
  \draw[thick] (3ll) -- (2ll) -- (1ll) -- (3ll);
  \draw[ultra thin] (-2llt) -- (1ll) (0llb) -- (2ll) -- (-2lb) -- (3ll) -- (-1lb) (3ll) -- (0l);
  \draw[ultra thin] (1ll) -- ++(-\r/8,-3*\rv/8) (2ll) -- ++(-\r/24,-\rv/8);
  \draw (-15*\r/8,\rv/8) node {-$3$};
\end{tikzpicture}
\caption{$9$-faces $F_3, F_4$ and $F_5$ with a part of $G'$ lying in $F_3$.}
\label{fig:F3inF4F5}
\end{figure}

Remark~\ref{rem:trianglesH} does not give us any information on $F_4$. However, we know that the edge $(-2,1)$ from the vertex $-2$ on the common boundary of $F_3$ and $F_4$ lies in $F_4$ (this tells us that the edge $(1,-2)$ in $F_2$ is forced). Moreover, that edge is forced (to be able to have $(2,0)$). In addition, the labels $1$ and $2$ are forced, forcing the edge $(3,0)$, and then the edge $(1,-3)$. We arrive at the drawing as in Figure~\ref{fig:F3F4inF5} (note that several edges are still missing from $F_4$). We also label the vertices of the $3$-cycle $(1,2,3)$ lying $F_5$ by $(i,j,k)$, where $\{i,j,k\}=\{1,2,3\}$.

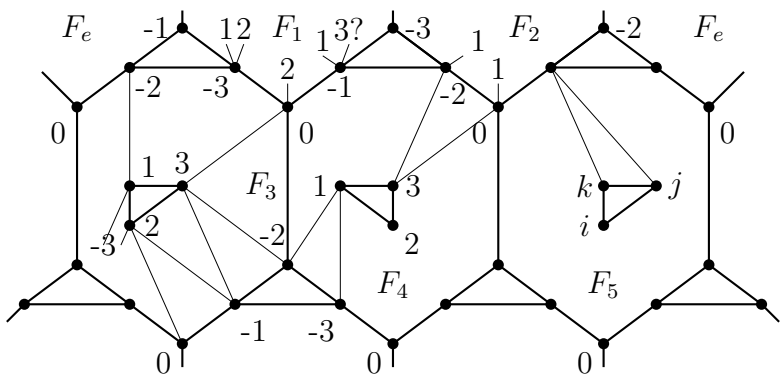
\begin{figure}[h]
\centering
\begin{tikzpicture}[scale=0.7]
\def \r {4}
\def \rv {3}
\def \ger{\r/6}
  \coordinate (0c) at (0,\rv); \fill (0c) circle (3pt); \draw (0c) node[shift=({-0.25,-0.35})] {0};
  \coordinate (0l) at (-\r,\rv); \fill (0l) circle (3pt); \draw (0l) node[shift=({0.25,-0.35})] {0};
  \coordinate (0r) at (\r,\rv); \fill (0r) circle (3pt); \draw (0r) node[shift=({0.25,-0.35})] {0};
  \coordinate (-1l) at (-\r/2,3*\rv/2); \fill (-1l) circle (3pt); \draw (-1l) node[right=.05em] {-$3$};
  \coordinate (-2r) at (\r/2,3*\rv/2); \fill (-2r) circle (3pt); \draw (-2r) node[right=.05em] {-$2$};
  \coordinate (-3cb) at (0,0); \fill (-3cb) circle (3pt); 
  \coordinate (-1cb) at (-\r/4,-\rv/4); \fill (-1cb) circle (3pt); 
  \coordinate (-2cb) at (\r/4,-\rv/4); \fill (-2cb) circle (3pt); 
  \coordinate (0lb) at (-\r/2,-\rv/2); \fill (0lb) circle (3pt); \draw (0lb) node[shift=({-0.25,-0.25})] {0};
  \coordinate (-1lb) at (-\r,0); \fill (-1lb) circle (3pt); \draw (-1lb) node[shift=({-0.2,0.4})] {-$2$};
  \coordinate (-3lb) at (-3*\r/4,-\rv/4); \fill (-3lb) circle (3pt); \draw (-3lb) node[shift=({-0.25,-0.35})] {-$3$};
  \coordinate (-2lb) at (-5*\r/4,-\rv/4); \fill (-2lb) circle (3pt); \draw (-2lb) node[shift=({0.25,-0.35})] {-$1$};
  \coordinate (0rb) at (\r/2,-\rv/2); \fill (0rb) circle (3pt); \draw (0rb) node[shift=({-0.25,-0.25})] {0};
  \coordinate (-2rb) at (\r,0); \fill (-2rb) circle (3pt); 
  \coordinate (-1rb) at (3*\r/4,-\rv/4); \fill (-1rb) circle (3pt); 
  \coordinate (-3rb) at (5*\r/4,-\rv/4); \fill (-3rb) circle (3pt); 
  \coordinate (-3rt) at (\r/4,5*\rv/4); \fill (-3rt) circle (3pt); 
  \coordinate (-1rt) at (3*\r/4,5*\rv/4); \fill (-1rt) circle (3pt); 
  \coordinate (-3lt) at (-\r/4,5*\rv/4); \fill (-3lt) circle (3pt); \draw (-3lt) node[shift=({0.1,-0.35})] {-$2$};
  \coordinate (-2lt) at (-3*\r/4,5*\rv/4); \fill (-2lt) circle (3pt); \draw (-2lt) node[shift=({0,-0.25})] {-$1$};
  \coordinate (0ll) at (-2*\r,\rv); \fill (0ll) circle (3pt); \draw (0ll) node[shift=({-0.25,-0.35})] {0};
  \coordinate (-1ll) at (-3*\r/2,3*\rv/2); \fill (-1ll) circle (3pt); \draw (-1ll) node[left=.06em] {-$1$};
  \coordinate (0llb) at (-3*\r/2,-\rv/2); \fill (0llb) circle (3pt); \draw (0llb) node[shift=({-0.25,-0.25})] {0};
  \coordinate (-1llb) at (-2*\r,0); \fill (-1llb) circle (3pt); 
  \coordinate (-3llb) at (-7*\r/4,-\rv/4); \fill (-3llb) circle (3pt); 
  \coordinate (-2llb) at (-9*\r/4,-\rv/4); \fill (-2llb) circle (3pt); 
  \coordinate (-3llt) at (-5*\r/4,5*\rv/4); \fill (-3llt) circle (3pt); \draw (-3llt) node[shift=({-0.25,-0.25})] {-$3$};
  \coordinate (-2llt) at (-7*\r/4,5*\rv/4); \fill (-2llt) circle (3pt); \draw (-2llt) node[shift=({0.25,-0.25})] {-$2$};
  \draw[thick] (0rb) -- (-1rb) -- (-3rb) -- (-2rb) -- (-1rb) (0lb) -- (-3lb) -- (-1lb) -- (-2lb) -- (-3lb) (0lb) -- (-1cb) -- (-2cb) -- (0rb);
  \draw[thick] (-3cb) -- (0c) -- (-1l)  (0c) -- (-2r) (0l) -- (-1lb) (0r) -- (-2rb) (-1cb) -- (-3cb) -- (-2cb);
  \draw[thick] (-3rt) -- (-2r) -- (-1rt) -- (-3rt)  (0r) -- (-1rt) (-3lt) -- (-1l) -- (-2lt) -- (-3lt) (0l) -- (-2lt);
  \draw[thick]  (-2lb) -- (0llb) -- (-3llb) -- (-2llb) -- (-1llb) -- (0ll) -- (-2llt) -- (-1ll) -- (-3llt) -- (0l) (-2llt) -- (-3llt) (-1llb) -- (-3llb);
  \draw[thick] (0lb) -- ++(0,-2*\ger/3) (0llb) -- ++(0,-2*\ger/3) (0rb) -- ++(0,-2*\ger/3) (-2llb) -- ++(-\ger/2,-\ger/2) (-3rb) -- ++(\ger/2,-\ger/2);
  \draw[thick] (-1l) -- ++(0,\ger/2) (-1ll) -- ++(0,\ger/2) (-2r) -- ++(0,\ger/2) (0ll) -- ++(-\ger,\ger) (0r) -- ++(\ger,\ger);
  \draw (\r,3*\rv/2) node {$F_e$}; \draw (-2*\r,3*\rv/2) node {$F_e$};
  \draw (-9*\r/8,\rv/2) node {$F_3$}; \draw (-\r/2,-\rv/8) node {$F_4$}; \draw (\r/2,-\rv/8) node {$F_5$};
  \draw (\r/8,3*\rv/2) node {$F_2$}; \draw (-\r,3*\rv/2) node {$F_1$};
  \coordinate (3F1) at ($(-2lt)+(\ger/4,2*\ger/3)$); \draw (3F1) node[shift=({0,0.2})] {$3?$};
  \coordinate (1F1) at ($(-2lt)+(-2*\ger/4,\ger/3)$); \draw (1F1) node[shift=({0,0.2})] {$1$};
  \draw[ultra thin] (3F1) -- (-2lt) -- (1F1);
  \coordinate (20F1) at ($(0l)+(0,2*\ger/3)$); \draw (20F1) node[shift=({0,0.2})] {$2$};
  \coordinate (1F2) at ($(0c)+(0,2*\ger/3)$); \draw (1F2) node[shift=({0,0.2})] {$1$};
  \draw[ultra thin] (20F1) -- (0l) (0c)-- (1F2);
  \coordinate (1F31) at ($(-3llt)+(-\ger/4,2*\ger/3)$); \draw (1F31) node[shift=({0,0.2})] {$1$};
  \coordinate (2F31) at ($(-3llt)+(\ger/4,2*\ger/3)$); \draw (2F31) node[shift=({0,0.2})] {$2$};
  \draw[ultra thin] (1F31) -- (-3llt) -- (2F31);
  \coordinate (1F12) at ($(-3lt)+(\ger/2,\ger/3)$); \draw (1F12) node[shift=({0.2,0.2})] {$1$};
  \draw[ultra thin] (1F12) -- (-3lt);
  \coordinate (3r) at (3*\r/4,\rv/2); \fill (3r) circle (3pt); \draw (3r) node[right=.05em] {$j$};
  \coordinate (1r) at (\r/2,\rv/4); \fill (1r) circle (3pt); \draw (1r) node[shift=({-0.25,0})] {$i$};
  \coordinate (2r) at (\r/2,\rv/2); \fill (2r) circle (3pt); \draw (2r) node[shift=({-0.25,0})] {$k$};
  \draw[thick] (3r) -- (2r) -- (1r) -- (3r);
  \draw[ultra thin] (3r) -- (-3rt) -- (2r);
  \coordinate (1l) at (-\r/2,\rv/2); \fill (1l) circle (3pt); \draw (1l) node[right=.05em] {$3$};
  \coordinate (3l) at (-3*\r/4,\rv/2); \fill (3l) circle (3pt); \draw (3l) node[left=.05em] {$1$};
  \coordinate (2l) at (-\r/2,\rv/4); \fill (2l) circle (3pt); \draw (2l) node[shift=({0.25,-0.25})] {$2$};
  \draw[thick] (3l) -- (2l) -- (1l) -- (3l);
  \draw[ultra thin] (-1lb) -- (3l) -- (-3lb)  (0c) -- (1l) -- (-3lt);
  \coordinate (1ll) at (-7*\r/4,\rv/2); \fill (1ll) circle (3pt); \draw (1ll) node[shift=({0.25,0.25})] {$1$};
  \coordinate (3ll) at (-3*\r/2,\rv/2); \fill (3ll) circle (3pt); \draw (3ll) node[above=.05em] {$3$};
  \coordinate (2ll) at (-7*\r/4,\rv/4); \fill (2ll) circle (3pt); \draw (2ll) node[shift=({0.3,0})] {$2$};
  \draw[thick] (3ll) -- (2ll) -- (1ll) -- (3ll);
  \draw[ultra thin] (-2llt) -- (1ll) (0llb) -- (2ll) -- (-2lb) -- (3ll) -- (-1lb) (3ll) -- (0l);
  \draw[ultra thin] (1ll) -- ++(-\r/8,-3*\rv/8) (2ll) -- ++(-\r/24,-\rv/8);
  \draw (-15*\r/8,\rv/8) node {-$3$};
\end{tikzpicture}
\caption{$9$-faces $F_3, F_4$ and $F_5$ with a part of $G'$ lying in $F_3, F_4$.}
\label{fig:F3F4inF5}
\end{figure}

Note that by Remark~\ref{rem:trianglesH}, the face $F_5$ contains a $3$-cycle $(-2,1,3)$. Moreover, $i \ne 2$. If $j=2$, then the vertices $\{i,k\}=\{1,3\}$ must be both connected to a $-2$ on the boundary of $F_5$, and hence must be connected to the two different vertices $0$. We have the labelling and the two forced edges $(k,0)$ and $(i,0)$ as on the left in Figure~\ref{fig:F5}. But then the edge $(2,0)$ is also forced, as also is $(i,-k)$ which leads to a contradiction: no $3$-cycle $(-2,1,3)$. We deduce that $k=2$ and $\{i,j\}=\{1,3\}$. By the similar argument, the vertices $i$ and $j$ must be connected to different vertices $0$, and then the edges $(i,0)$ and $(j,0)$ are forced, as is also the edge $(2,-j)$. We obtain the drawing on the right in Figure~\ref{fig:F5}.

\begin{figure}[h]
\centering
\begin{tikzpicture}[scale=0.7]
\def \r {4}
\def \rv {3}
\def \ger{\r/6}
\begin{scope}
  \coordinate (0c) at (0,\rv); \fill (0c) circle (3pt); \draw (0c) node[shift=({0.25,-0.35})] {0};
  \coordinate (0r) at (\r,\rv); \fill (0r) circle (3pt); \draw (0r) node[shift=({0.25,-0.35})] {0};
  \coordinate (-2r) at (\r/2,3*\rv/2); \fill (-2r) circle (3pt); \draw (-2r) node[right=.05em] {-$2$};
  \coordinate (-3cb) at (0,0); \fill (-3cb) circle (3pt); 
  \coordinate (-1cb) at (-\r/4,-\rv/4); \fill (-1cb) circle (3pt); 
  \coordinate (-2cb) at (\r/4,-\rv/4); \fill (-2cb) circle (3pt); 
  \coordinate (0rb) at (\r/2,-\rv/2); \fill (0rb) circle (3pt); \draw (0rb) node[shift=({-0.25,-0.25})] {0};
  \coordinate (-2rb) at (\r,0); \fill (-2rb) circle (3pt); 
  \coordinate (-1rb) at (3*\r/4,-\rv/4); \fill (-1rb) circle (3pt); 
  \coordinate (-3rb) at (5*\r/4,-\rv/4); \fill (-3rb) circle (3pt); 
  \coordinate (-3rt) at (\r/4,5*\rv/4); \fill (-3rt) circle (3pt); \draw (-3rt) node[shift=({-0.2,0.25})] {-$i$};
  \coordinate (-1rt) at (3*\r/4,5*\rv/4); \fill (-1rt) circle (3pt); \draw (-1rt) node[shift=({0.25,0.25})] {-$k$};
  \draw[thick] (0rb) -- (-1rb) -- (-3rb) -- (-2rb) -- (-1rb) (-1cb) -- (-2cb) -- (0rb);
  \draw[thick] (-3cb) -- (0c) (0r) -- (-2rb) (-1cb) -- (-3cb) -- (-2cb);
  \draw[thick] (0c) -- (-3rt) -- (-2r) -- (-1rt) -- (-3rt)  (0r) -- (-1rt);
  \draw[thick] (0rb) -- ++(0,-2*\ger/3) (-3rb) -- ++(\ger/2,-\ger/2)  (-1cb) -- ++(-\ger/2,-\ger/2) (-2r) -- ++(0,\ger/2) (0r) -- ++(\ger,\ger);
  \draw (\r,3*\rv/2) node {$F_e$};
  \draw (\r/2,-\rv/8) node {$F_5$}; \draw (-\r/8,\rv/4) node {$F_4$};
  \draw (-\r/16,3*\rv/2) node {$F_2$};
  \coordinate (1F2) at ($(0c)+(0,2*\ger/3)$); \draw (1F2) node[shift=({0,0.2})] {$1$};
  \coordinate (3F4) at ($(0c)+(-\ger,-\ger)$); \draw (3F4) node[shift=({-0.2,-0.2})] {$3$};
  \coordinate (-2) at ($(0c)+(-\ger,\ger)$); \draw (-2) node[shift=({-0.2,0.2})] {-$2$};
  \draw[ultra thin] (3F4) -- (0c)-- (1F2);
  \draw[thick] (0c) -- (-2);
  \coordinate (3r) at (3*\r/4,\rv/2); \fill (3r) circle (3pt); \draw (3r) node[above=.05em] {$2$};
  \coordinate (1r) at (\r/2,\rv/4); \fill (1r) circle (3pt); \draw (1r) node[shift=({-0.25,0})] {$i$};
  \coordinate (2r) at (\r/2,\rv/2); \fill (2r) circle (3pt); \draw (2r) node[shift=({-0.25,0})] {$k$};
  \draw[thick] (3r) -- (2r) -- (1r) -- (3r);
  \draw[ultra thin] (3r) -- (-3rt) -- (2r);
  \draw[ultra thin] (2r) .. controls (\r/4,0) .. (0rb);
  \draw[ultra thin] (1r) .. controls (3*\r/4,0) .. (0r);
\end{scope}
\begin{scope}[shift={(2.5*\r,0)}]
  \coordinate (0c) at (0,\rv); \fill (0c) circle (3pt); \draw (0c) node[shift=({0.25,-0.35})] {0};
  \coordinate (0r) at (\r,\rv); \fill (0r) circle (3pt); \draw (0r) node[shift=({0.25,-0.35})] {0};
  \coordinate (-2r) at (\r/2,3*\rv/2); \fill (-2r) circle (3pt); \draw (-2r) node[right=.05em] {-$2$};
  \coordinate (-3cb) at (0,0); \fill (-3cb) circle (3pt); \draw (-3cb) node[shift=({-0.25,0.35})] {-$j$};
  \coordinate (-1cb) at (-\r/4,-\rv/4); \fill (-1cb) circle (3pt); 
  \coordinate (-2cb) at (\r/4,-\rv/4); \fill (-2cb) circle (3pt); 
  \coordinate (0rb) at (\r/2,-\rv/2); \fill (0rb) circle (3pt); \draw (0rb) node[shift=({-0.25,-0.25})] {0};
  \coordinate (-2rb) at (\r,0); \fill (-2rb) circle (3pt); 
  \coordinate (-1rb) at (3*\r/4,-\rv/4); \fill (-1rb) circle (3pt); 
  \coordinate (-3rb) at (5*\r/4,-\rv/4); \fill (-3rb) circle (3pt); 
  \coordinate (-3rt) at (\r/4,5*\rv/4); \fill (-3rt) circle (3pt); \draw (-3rt) node[shift=({-0.2,0.25})] {-$i$};
  \coordinate (-1rt) at (3*\r/4,5*\rv/4); \fill (-1rt) circle (3pt); \draw (-1rt) node[shift=({0.25,0.25})] {-$j$};
  \draw[thick] (0rb) -- (-1rb) -- (-3rb) -- (-2rb) -- (-1rb) (-1cb) -- (-2cb) -- (0rb);
  \draw[thick] (-3cb) -- (0c) (0r) -- (-2rb) (-1cb) -- (-3cb) -- (-2cb);
  \draw[thick] (0c) -- (-3rt) -- (-2r) -- (-1rt) -- (-3rt)  (0r) -- (-1rt);
  \draw[thick] (0rb) -- ++(0,-2*\ger/3) (-3rb) -- ++(\ger/2,-\ger/2)  (-1cb) -- ++(-\ger/2,-\ger/2) (-2r) -- ++(0,\ger/2) (0r) -- ++(\ger,\ger);
  \draw (\r,3*\rv/2) node {$F_e$};
  \draw (5*\r/8,-\rv/8) node {$F_5$}; \draw (-3*\r/8,\rv/4) node {$F_4$};
  \draw (-\r/16,3*\rv/2) node {$F_2$};
  \coordinate (1F2) at ($(0c)+(0,2*\ger/3)$); \draw (1F2) node[shift=({0,0.2})] {$1$};
  \coordinate (3F4) at ($(0c)+(-\ger,-\ger)$); \draw (3F4) node[shift=({-0.2,-0.2})] {$3$};
  \draw[ultra thin] (3F4) -- (0c)-- (1F2);
  \coordinate (-2) at ($(0c)+(-\ger,\ger)$); \draw (-2) node[shift=({-0.2,0.2})] {-$2$};
  \draw[thick] (0c) -- (-2);
  \coordinate (3r) at (3*\r/4,\rv/2); \fill (3r) circle (3pt); \draw (3r) node[right=.05em] {$j$};
  \coordinate (1r) at (\r/2,\rv/4); \fill (1r) circle (3pt); \draw (1r) node[shift=({-0.25,0})] {$i$};
  \coordinate (2r) at (\r/2,\rv/2); \fill (2r) circle (3pt); \draw (2r) node[shift=({-0.25,0.1})] {$2$};
  \draw[thick] (3r) -- (2r) -- (1r) -- (3r);
  \draw[ultra thin] (3r) -- (-3rt) -- (2r);
  \draw[ultra thin] (1r) -- (0rb) (3r) -- (0r) (2r) -- (-3cb);
\end{scope}
\end{tikzpicture}
\caption{The face $F_5$.}
\label{fig:F5}
\end{figure}
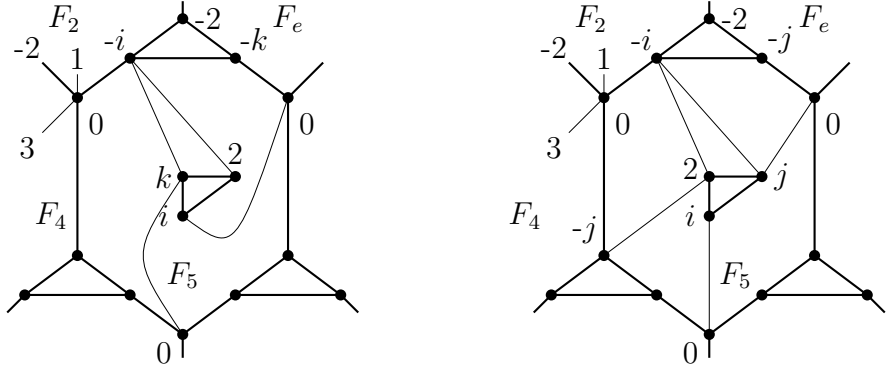

But now we cannot have $j=1$, as otherwise we cannot connect the vertex $2$ to $-1$ in $F_4$. It follows that $j=3$. Then in $F_4$, the edges $(2,-3)$ and $(2,-1)$ are forced, which then forces the placement of the vertices $-2$ on the boundary of $F_5$ and the edges $(1,-2)$ and $(3,-2)$ of $F_5$.

This forces most of the remaining edges and we obtain the drawing as in Figure~\ref{fig:F3F4F5in}. Note that the drawing of $G'$ in $F_5$ is ``fully assembled", while we have one edge $(3,-1)$ missing in $F_4$, and two edges $(1,-3)$ and $(2,-3)$ missing in $F_3$ (connected to the same vertex $-3$ on the boundary of $F_3$.

\begin{figure}[h]
\centering
\begin{tikzpicture}[scale=0.7]
\def \r {4}
\def \rv {3}
\def \ger{\r/6}
  \coordinate (0c) at (0,\rv); \fill (0c) circle (3pt); \draw (0c) node[shift=({-0.25,-0.35})] {0};
  \coordinate (0l) at (-\r,\rv); \fill (0l) circle (3pt); \draw (0l) node[shift=({0.25,-0.35})] {0};
  \coordinate (0r) at (\r,\rv); \fill (0r) circle (3pt); \draw (0r) node[shift=({0.25,-0.35})] {0};
  \coordinate (-1l) at (-\r/2,3*\rv/2); \fill (-1l) circle (3pt); \draw (-1l) node[right=.05em] {-$3$};
  \coordinate (-2r) at (\r/2,3*\rv/2); \fill (-2r) circle (3pt); \draw (-2r) node[right=.05em] {-$2$};
  \coordinate (-3cb) at (0,0); \fill (-3cb) circle (3pt); \draw (-3cb) node[shift=({-0.25,0.35})] {-3};
  \coordinate (-1cb) at (-\r/4,-\rv/4); \fill (-1cb) circle (3pt); \draw (-1cb) node[shift=({0,-0.35})] {-$1$};
  \coordinate (-2cb) at (\r/4,-\rv/4); \fill (-2cb) circle (3pt); \draw (-2cb) node[shift=({-0.25,-0.35})] {-$2$};
  \coordinate (0lb) at (-\r/2,-\rv/2); \fill (0lb) circle (3pt); \draw (0lb) node[shift=({-0.25,-0.25})] {0};
  \coordinate (-1lb) at (-\r,0); \fill (-1lb) circle (3pt); \draw (-1lb) node[shift=({-0.2,0.4})] {-$2$};
  \coordinate (-3lb) at (-3*\r/4,-\rv/4); \fill (-3lb) circle (3pt); \draw (-3lb) node[shift=({-0.25,-0.35})] {-$3$};
  \coordinate (-2lb) at (-5*\r/4,-\rv/4); \fill (-2lb) circle (3pt); \draw (-2lb) node[shift=({0.25,-0.35})] {-$1$};
  \coordinate (0rb) at (\r/2,-\rv/2); \fill (0rb) circle (3pt); \draw (0rb) node[shift=({-0.25,-0.25})] {0};
  \coordinate (-2rb) at (\r,0); \fill (-2rb) circle (3pt); \draw (-2rb) node[shift=({0.25,0.35})] {-$2$};
  \coordinate (-1rb) at (3*\r/4,-\rv/4); \fill (-1rb) circle (3pt); 
  \coordinate (-3rb) at (5*\r/4,-\rv/4); \fill (-3rb) circle (3pt); 
  \coordinate (-3rt) at (\r/4,5*\rv/4); \fill (-3rt) circle (3pt); \draw (-3rt) node[shift=({-0.2,-0.35})] {-$1$};
  \coordinate (-1rt) at (3*\r/4,5*\rv/4); \fill (-1rt) circle (3pt); \draw (-1rt) node[shift=({0.25,0.25})] {-$3$};
  \coordinate (-3lt) at (-\r/4,5*\rv/4); \fill (-3lt) circle (3pt); \draw (-3lt) node[shift=({0.1,-0.35})] {-$2$};
  \coordinate (-2lt) at (-3*\r/4,5*\rv/4); \fill (-2lt) circle (3pt); \draw (-2lt) node[shift=({0,-0.25})] {-$1$};
  \coordinate (0ll) at (-2*\r,\rv); \fill (0ll) circle (3pt); \draw (0ll) node[shift=({-0.25,-0.35})] {0};
  \coordinate (-1ll) at (-3*\r/2,3*\rv/2); \fill (-1ll) circle (3pt); \draw (-1ll) node[left=.06em] {-$1$};
  \coordinate (0llb) at (-3*\r/2,-\rv/2); \fill (0llb) circle (3pt); \draw (0llb) node[shift=({-0.25,-0.25})] {0};
  \coordinate (-1llb) at (-2*\r,0); \fill (-1llb) circle (3pt); 
  \coordinate (-3llb) at (-7*\r/4,-\rv/4); \fill (-3llb) circle (3pt); 
  \coordinate (-2llb) at (-9*\r/4,-\rv/4); \fill (-2llb) circle (3pt); 
  \coordinate (-3llt) at (-5*\r/4,5*\rv/4); \fill (-3llt) circle (3pt); \draw (-3llt) node[shift=({-0.25,-0.25})] {-$3$};
  \coordinate (-2llt) at (-7*\r/4,5*\rv/4); \fill (-2llt) circle (3pt); \draw (-2llt) node[shift=({0.25,-0.25})] {-$2$};
  \draw[thick] (0rb) -- (-1rb) -- (-3rb) -- (-2rb) -- (-1rb) (0lb) -- (-3lb) -- (-1lb) -- (-2lb) -- (-3lb) (0lb) -- (-1cb) -- (-2cb) -- (0rb);
  \draw[thick] (-3cb) -- (0c) -- (-1l)  (0c) -- (-2r) (0l) -- (-1lb) (0r) -- (-2rb) (-1cb) -- (-3cb) -- (-2cb);
  \draw[thick] (-3rt) -- (-2r) -- (-1rt) -- (-3rt)  (0r) -- (-1rt) (-3lt) -- (-1l) -- (-2lt) -- (-3lt) (0l) -- (-2lt);
  \draw[thick]  (-2lb) -- (0llb) -- (-3llb) -- (-2llb) -- (-1llb) -- (0ll) -- (-2llt) -- (-1ll) -- (-3llt) -- (0l) (-2llt) -- (-3llt) (-1llb) -- (-3llb);
  \draw[thick] (0lb) -- ++(0,-2*\ger/3) (0llb) -- ++(0,-2*\ger/3) (0rb) -- ++(0,-2*\ger/3) (-2llb) -- ++(-\ger/2,-\ger/2) (-3rb) -- ++(\ger/2,-\ger/2);
  \draw[thick] (-1l) -- ++(0,\ger/2) (-1ll) -- ++(0,\ger/2) (-2r) -- ++(0,\ger/2) (0ll) -- ++(-\ger,\ger) (0r) -- ++(\ger,\ger);
  \draw (\r,3*\rv/2) node {$F_e$}; \draw (-2*\r,3*\rv/2) node {$F_e$};
  \draw (-9*\r/8,\rv/2) node {$F_3$}; \draw (-\r/2,-\rv/8) node {$F_4$}; \draw (\r/2,-\rv/8) node {$F_5$};
  \draw (\r/8,3*\rv/2) node {$F_2$}; \draw (-\r,3*\rv/2) node {$F_1$};
  \coordinate (3F1) at ($(-2lt)+(\ger/4,2*\ger/3)$); \draw (3F1) node[shift=({0,0.2})] {$3?$};
  \coordinate (1F1) at ($(-2lt)+(-2*\ger/4,\ger/3)$); \draw (1F1) node[shift=({0,0.2})] {$1$};
  \draw[ultra thin] (3F1) -- (-2lt) -- (1F1);
  \coordinate (20F1) at ($(0l)+(0,2*\ger/3)$); \draw (20F1) node[shift=({0,0.2})] {$2$};
  \coordinate (1F2) at ($(0c)+(0,2*\ger/3)$); \draw (1F2) node[shift=({0,0.2})] {$1$};
  \draw[ultra thin] (20F1) -- (0l) (0c)-- (1F2);
  \coordinate (1F31) at ($(-3llt)+(-\ger/4,2*\ger/3)$); \draw (1F31) node[shift=({0,0.2})] {$1$};
  \coordinate (2F31) at ($(-3llt)+(\ger/4,2*\ger/3)$); \draw (2F31) node[shift=({0,0.2})] {$2$};
  \draw[ultra thin] (1F31) -- (-3llt) -- (2F31);
  \coordinate (1F12) at ($(-3lt)+(\ger/2,\ger/3)$); \draw (1F12) node[shift=({0.2,0.2})] {$1$};
  \draw[ultra thin] (1F12) -- (-3lt);
  \coordinate (3r) at (3*\r/4,\rv/2); \fill (3r) circle (3pt); \draw (3r) node[right=.05em] {$3$};
  \coordinate (1r) at (\r/2,\rv/4); \fill (1r) circle (3pt); \draw (1r) node[shift=({-0.25,0})] {$1$};
  \coordinate (2r) at (\r/2,\rv/2); \fill (2r) circle (3pt); \draw (2r) node[shift=({-0.25,0})] {$2$};
  \draw[thick] (3r) -- (2r) -- (1r) -- (3r);
  \draw[ultra thin] (3r) -- (-3rt) -- (2r);
  \coordinate (1l) at (-\r/2,\rv/2); \fill (1l) circle (3pt); \draw (1l) node[right=.05em] {$3$};
  \coordinate (3l) at (-3*\r/4,\rv/2); \fill (3l) circle (3pt); \draw (3l) node[left=.05em] {$1$};
  \coordinate (2l) at (-\r/2,\rv/4); \fill (2l) circle (3pt); \draw (2l) node[shift=({0.25,-0.25})] {$2$};
  \draw[thick] (3l) -- (2l) -- (1l) -- (3l);
  \draw[ultra thin] (-1lb) -- (3l) -- (-3lb)  (0c) -- (1l) -- (-3lt);
  \coordinate (1ll) at (-7*\r/4,\rv/2); \fill (1ll) circle (3pt); \draw (1ll) node[shift=({0.25,0.25})] {$1$};
  \coordinate (3ll) at (-3*\r/2,\rv/2); \fill (3ll) circle (3pt); \draw (3ll) node[above=.05em] {$3$};
  \coordinate (2ll) at (-7*\r/4,\rv/4); \fill (2ll) circle (3pt); \draw (2ll) node[shift=({0.3,0})] {$2$};
  \draw[thick] (3ll) -- (2ll) -- (1ll) -- (3ll);
  \draw[ultra thin] (-2llt) -- (1ll) (0llb) -- (2ll) -- (-2lb) -- (3ll) -- (-1lb) (3ll) -- (0l);
  \draw[ultra thin] (1ll) -- ++(-\r/8,-3*\rv/8) (2ll) -- ++(-\r/24,-\rv/8);
  \draw (-15*\r/8,\rv/8) node {-$3$};
  \draw[ultra thin] (0c) -- (2r) -- (-3cb)--(1r) -- (0rb) (1r)--(-2rb)--(3r)--(0r);
  \draw[ultra thin] (-3lb) -- (2l) -- (-1cb) (2l)--(0lb) (3l)--(0l) (1ll)--(0ll);
  \draw (-\r,-3*\rv/4) node {$F_6$}; \draw (0,-3*\rv/4) node {$F_7$};
\end{tikzpicture}
\caption{$9$-faces $F_3, F_4$ and $F_5$ with a part of $G'$ lying in them.}
\label{fig:F3F4F5in}
\end{figure}
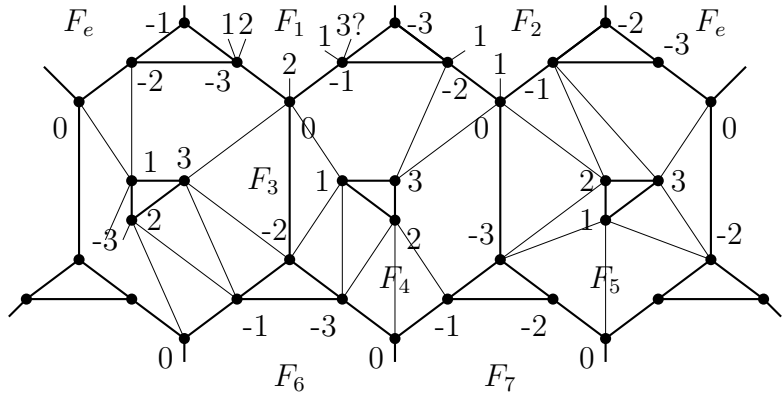

Consider the faces $F_6$ and $F_7$ attached to $F_3, F_4, F_5$ ``from the bottom" as shown in Figure~\ref{fig:F3F4F5in}. They cannot be the same face and cannot be triangular. Moreover, there is an arc from $F_7$ to $F_e$ which only crosses the edges $(0,-2)$ and $(0,-3)$ of $H$, and an arc from $F_6$ to $F_e$ which only crosses the edges $(0,-2)$ and $(0,-1)$ of $H$. We deduce that both $F_6$ and $F_7$ are internal $9$-faces of $H$, by Remark~\ref{rem:twoext}. Moreover, from Remark~\ref{rem:trianglesH} we obtain that $F_6$ contains the $3$-cycles $(-2,1,3)$ and $(-1,2,3)$, and $F_7$, the $3$-cycles $(-3,1,2)$ and $(-2,1,3)$. The faces $F_6$ and $F_7$, together with some forced labels and edges are shown on the left in Figure~\ref{fig:F6F7}.

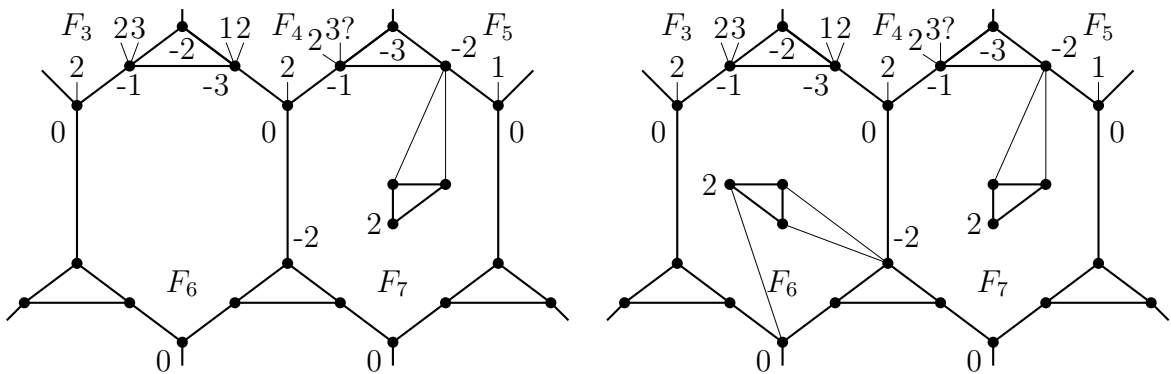
\begin{figure}[h]
\centering
\begin{tikzpicture}[scale=0.7]
\def \r {4}
\def \rv {3}
\def \ger{\r/6}
\begin{scope}
  \coordinate (0c) at (0,\rv); \fill (0c) circle (3pt); \draw (0c) node[shift=({-0.25,-0.35})] {0};
  \coordinate (0l) at (-\r,\rv); \fill (0l) circle (3pt); \draw (0l) node[shift=({-0.25,-0.35})] {0};
  \coordinate (0r) at (\r,\rv); \fill (0r) circle (3pt); \draw (0r) node[shift=({0.25,-0.35})] {0};
  \coordinate (-1l) at (-\r/2,3*\rv/2); \fill (-1l) circle (3pt); \draw (-1l) node[below=.05em] {-$2$};
  \coordinate (-2r) at (\r/2,3*\rv/2); \fill (-2r) circle (3pt); \draw (-2r) node[below=.06em] {-$3$};
  \coordinate (-3cb) at (0,0); \fill (-3cb) circle (3pt); \draw (-3cb) node[shift=({0.25,0.35})] {-2};
  \coordinate (-1cb) at (-\r/4,-\rv/4); \fill (-1cb) circle (3pt); 
  \coordinate (-2cb) at (\r/4,-\rv/4); \fill (-2cb) circle (3pt); 
  \coordinate (0lb) at (-\r/2,-\rv/2); \fill (0lb) circle (3pt); \draw (0lb) node[shift=({-0.25,-0.25})] {0};
  \coordinate (-1lb) at (-\r,0); \fill (-1lb) circle (3pt); 
  \coordinate (-3lb) at (-3*\r/4,-\rv/4); \fill (-3lb) circle (3pt); 
  \coordinate (-2lb) at (-5*\r/4,-\rv/4); \fill (-2lb) circle (3pt); 
  \coordinate (0rb) at (\r/2,-\rv/2); \fill (0rb) circle (3pt); \draw (0rb) node[shift=({-0.25,-0.25})] {0};
  \coordinate (-2rb) at (\r,0); \fill (-2rb) circle (3pt); 
  \coordinate (-1rb) at (3*\r/4,-\rv/4); \fill (-1rb) circle (3pt); 
  \coordinate (-3rb) at (5*\r/4,-\rv/4); \fill (-3rb) circle (3pt); 
  \coordinate (-3rt) at (\r/4,5*\rv/4); \fill (-3rt) circle (3pt); \draw (-3rt) node[shift=({0,-0.25})] {-$1$};
  \coordinate (-1rt) at (3*\r/4,5*\rv/4); \fill (-1rt) circle (3pt); \draw (-1rt) node[shift=({0.25,0.25})] {-$2$};
  \coordinate (-3lt) at (-\r/4,5*\rv/4); \fill (-3lt) circle (3pt); \draw (-3lt) node[shift=({-0.25,-0.25})] {-$3$};
  \coordinate (-2lt) at (-3*\r/4,5*\rv/4); \fill (-2lt) circle (3pt); \draw (-2lt) node[shift=({0,-0.25})] {-$1$};
  \draw[thick] (0rb) -- (-1rb) -- (-3rb) -- (-2rb) -- (-1rb) (0lb) -- (-3lb) -- (-1lb) -- (-2lb) -- (-3lb) (0lb) -- (-1cb) -- (-2cb) -- (0rb);
  \draw[thick] (-3cb) -- (0c) -- (-1l)  (0c) -- (-2r) (0l) -- (-1lb) (0r) -- (-2rb) (-1cb) -- (-3cb) -- (-2cb);
  \draw[thick] (-3rt) -- (-2r) -- (-1rt) -- (-3rt)  (0r) -- (-1rt) (-3lt) -- (-1l) -- (-2lt) -- (-3lt) (0l) -- (-2lt);
  \draw[thick] (0lb) -- ++(0,-2*\ger/3) (0rb) -- ++(0,-2*\ger/3) (-2lb) -- ++(-\ger/2,-\ger/2) (-3rb) -- ++(\ger/2,-\ger/2);
  \draw[thick] (-1l) -- ++(0,\ger/2) (-2r) -- ++(0,\ger/2) (0l) -- ++(-\ger,\ger) (0r) -- ++(\ger,\ger);
  \draw (\r,3*\rv/2) node {$F_5$}; 
  \draw (-\r/2,-\rv/8) node {$F_6$}; \draw (\r/2,-\rv/8) node {$F_7$};
  \draw (0,3*\rv/2) node {$F_4$}; \draw (-\r,3*\rv/2) node {$F_3$};
  \coordinate (3F1) at ($(-2lt)+(-\ger/4,2*\ger/3)$); \draw (3F1) node[shift=({0,0.2})] {$2$};
  \coordinate (1F1) at ($(-2lt)+(\ger/4,2*\ger/3)$); \draw (1F1) node[shift=({0,0.2})] {$3$};
  \draw[ultra thin] (3F1) -- (-2lt) -- (1F1);
  \coordinate (1F4) at ($(-3lt)+(-\ger/4,2*\ger/3)$); \draw (1F4) node[shift=({0,0.2})] {$1$};
  \coordinate (2F4) at ($(-3lt)+(\ger/4,2*\ger/3)$); \draw (2F4) node[shift=({0,0.2})] {$2$};
  \draw[ultra thin] (1F4) -- (-3lt) -- (2F4);
  \coordinate (3F74) at ($(-3rt)+(-\ger/2,\ger/3)$); \draw (3F74) node[shift=({-0.1,0.2})] {$2$};
  \coordinate (1F74) at ($(-3rt)+(0,2*\ger/3)$); \draw (1F74) node[shift=({0,0.2})] {$3?$};
  \draw[ultra thin] (3F74) -- (-3rt) -- (1F74);
  \coordinate (30F1) at ($(0l)+(0,2*\ger/3)$); \draw (30F1) node[shift=({0,0.2})] {$2$};
  \coordinate (1F2) at ($(0c)+(0,2*\ger/3)$); \draw (1F2) node[shift=({0,0.2})] {$2$};
  \coordinate (1F5) at ($(0r)+(0,2*\ger/3)$); \draw (1F5) node[shift=({0,0.2})] {$1$};
  \draw[ultra thin] (30F1) -- (0l) (0c)-- (1F2) (0r)--(1F5);
  \coordinate (3r) at (3*\r/4,\rv/2); \fill (3r) circle (3pt); 
  \coordinate (1r) at (\r/2,\rv/4); \fill (1r) circle (3pt); \draw (1r) node[shift=({-0.25,0})] {$2$};
  \coordinate (2r) at (\r/2,\rv/2); \fill (2r) circle (3pt); 
  \draw[thick] (3r) -- (2r) -- (1r) -- (3r);
  \draw[ultra thin] (3r) -- (-1rt) -- (2r);
\end{scope}
\begin{scope}[shift={(2.85*\r,0)}]
  \coordinate (0c) at (0,\rv); \fill (0c) circle (3pt); \draw (0c) node[shift=({-0.25,-0.35})] {0};
  \coordinate (0l) at (-\r,\rv); \fill (0l) circle (3pt); \draw (0l) node[shift=({-0.25,-0.35})] {0};
  \coordinate (0r) at (\r,\rv); \fill (0r) circle (3pt); \draw (0r) node[shift=({0.25,-0.35})] {0};
  \coordinate (-1l) at (-\r/2,3*\rv/2); \fill (-1l) circle (3pt); \draw (-1l) node[below=.05em] {-$2$};
  \coordinate (-2r) at (\r/2,3*\rv/2); \fill (-2r) circle (3pt); \draw (-2r) node[below=.06em] {-$3$};
  \coordinate (-3cb) at (0,0); \fill (-3cb) circle (3pt); \draw (-3cb) node[shift=({0.25,0.35})] {-2};
  \coordinate (-1cb) at (-\r/4,-\rv/4); \fill (-1cb) circle (3pt); 
  \coordinate (-2cb) at (\r/4,-\rv/4); \fill (-2cb) circle (3pt); 
  \coordinate (0lb) at (-\r/2,-\rv/2); \fill (0lb) circle (3pt); \draw (0lb) node[shift=({-0.25,-0.25})] {0};
  \coordinate (-1lb) at (-\r,0); \fill (-1lb) circle (3pt); 
  \coordinate (-3lb) at (-3*\r/4,-\rv/4); \fill (-3lb) circle (3pt); 
  \coordinate (-2lb) at (-5*\r/4,-\rv/4); \fill (-2lb) circle (3pt); 
  \coordinate (0rb) at (\r/2,-\rv/2); \fill (0rb) circle (3pt); \draw (0rb) node[shift=({-0.25,-0.25})] {0};
  \coordinate (-2rb) at (\r,0); \fill (-2rb) circle (3pt); 
  \coordinate (-1rb) at (3*\r/4,-\rv/4); \fill (-1rb) circle (3pt); 
  \coordinate (-3rb) at (5*\r/4,-\rv/4); \fill (-3rb) circle (3pt); 
  \coordinate (-3rt) at (\r/4,5*\rv/4); \fill (-3rt) circle (3pt); \draw (-3rt) node[shift=({0,-0.25})] {-$1$};
  \coordinate (-1rt) at (3*\r/4,5*\rv/4); \fill (-1rt) circle (3pt); \draw (-1rt) node[shift=({0.25,0.25})] {-$2$};
  \coordinate (-3lt) at (-\r/4,5*\rv/4); \fill (-3lt) circle (3pt); \draw (-3lt) node[shift=({-0.25,-0.25})] {-$3$};
  \coordinate (-2lt) at (-3*\r/4,5*\rv/4); \fill (-2lt) circle (3pt); \draw (-2lt) node[shift=({0,-0.25})] {-$1$};
  \draw[thick] (0rb) -- (-1rb) -- (-3rb) -- (-2rb) -- (-1rb) (0lb) -- (-3lb) -- (-1lb) -- (-2lb) -- (-3lb) (0lb) -- (-1cb) -- (-2cb) -- (0rb);
  \draw[thick] (-3cb) -- (0c) -- (-1l)  (0c) -- (-2r) (0l) -- (-1lb) (0r) -- (-2rb) (-1cb) -- (-3cb) -- (-2cb);
  \draw[thick] (-3rt) -- (-2r) -- (-1rt) -- (-3rt)  (0r) -- (-1rt) (-3lt) -- (-1l) -- (-2lt) -- (-3lt) (0l) -- (-2lt);
  \draw[thick] (0lb) -- ++(0,-2*\ger/3) (0rb) -- ++(0,-2*\ger/3) (-2lb) -- ++(-\ger/2,-\ger/2) (-3rb) -- ++(\ger/2,-\ger/2);
  \draw[thick] (-1l) -- ++(0,\ger/2) (-2r) -- ++(0,\ger/2) (0l) -- ++(-\ger,\ger) (0r) -- ++(\ger,\ger);
  \draw (\r,3*\rv/2) node {$F_5$}; 
  \draw (-\r/2,-\rv/8) node {$F_6$}; \draw (\r/2,-\rv/8) node {$F_7$};
  \draw (0,3*\rv/2) node {$F_4$}; \draw (-\r,3*\rv/2) node {$F_3$};
  \coordinate (3F1) at ($(-2lt)+(-\ger/4,2*\ger/3)$); \draw (3F1) node[shift=({0,0.2})] {$2$};
  \coordinate (1F1) at ($(-2lt)+(\ger/4,2*\ger/3)$); \draw (1F1) node[shift=({0,0.2})] {$3$};
  \draw[ultra thin] (3F1) -- (-2lt) -- (1F1);
  \coordinate (1F4) at ($(-3lt)+(-\ger/4,2*\ger/3)$); \draw (1F4) node[shift=({0,0.2})] {$1$};
  \coordinate (2F4) at ($(-3lt)+(\ger/4,2*\ger/3)$); \draw (2F4) node[shift=({0,0.2})] {$2$};
  \draw[ultra thin] (1F4) -- (-3lt) -- (2F4);
  \coordinate (3F74) at ($(-3rt)+(-\ger/2,\ger/3)$); \draw (3F74) node[shift=({-0.1,0.2})] {$2$};
  \coordinate (1F74) at ($(-3rt)+(0,2*\ger/3)$); \draw (1F74) node[shift=({0,0.2})] {$3?$};
  \draw[ultra thin] (3F74) -- (-3rt) -- (1F74);
  \coordinate (30F1) at ($(0l)+(0,2*\ger/3)$); \draw (30F1) node[shift=({0,0.2})] {$2$};
  \coordinate (1F2) at ($(0c)+(0,2*\ger/3)$); \draw (1F2) node[shift=({0,0.2})] {$2$};
  \coordinate (1F5) at ($(0r)+(0,2*\ger/3)$); \draw (1F5) node[shift=({0,0.2})] {$1$};
  \draw[ultra thin] (30F1) -- (0l) (0c)-- (1F2) (0r)--(1F5);
  \coordinate (3r) at (3*\r/4,\rv/2); \fill (3r) circle (3pt); 
  \coordinate (1r) at (\r/2,\rv/4); \fill (1r) circle (3pt); \draw (1r) node[shift=({-0.25,0})] {$2$};
  \coordinate (2r) at (\r/2,\rv/2); \fill (2r) circle (3pt); 
  \draw[thick] (3r) -- (2r) -- (1r) -- (3r);
  \draw[ultra thin] (3r) -- (-1rt) -- (2r);
  \coordinate (1l) at (-\r/2,\rv/2); \fill (1l) circle (3pt); 
  \coordinate (3l) at (-3*\r/4,\rv/2); \fill (3l) circle (3pt); \draw (3l) node[left=.05em] {$2$};
  \coordinate (2l) at (-\r/2,\rv/4); \fill (2l) circle (3pt); 
  \draw[thick] (3l) -- (2l) -- (1l) -- (3l);
  \draw[ultra thin] (1l) -- (-3cb) -- (2l) (3l)--(0lb);
\end{scope}
\end{tikzpicture}
\caption{$9$-faces $F_6$ and $F_7$.}
\label{fig:F6F7}
\end{figure}

Now the vertex $-2$ shared by $F_6$ and $F_7$ must have its both edges $(-2,1)$ and $(-2,3)$ in $F_6$. Then the vertex $2$ lying in $F_6$ can be connected to a unique vertex $0$, as on the right in Figure~\ref{fig:F6F7}. But now it is easy to see that the part of the semi-cover $G'$ lying in $F_6$ cannot contain the $3$-cycle $(-1,2,3)$, which gives the desired contradiction.
\end{proof}


\end{document}